\newtheorem{thm}{Theorem}[section]
\newtheorem{pro}[thm]{Proposition}%[section]
\newtheorem{cor}[thm]{Corollary}%[section]
\newtheorem{lem}[thm]{Lemma}%[section]
\newtheorem*{opq*}{\bf Problem}
\theoremstyle{remark}
\newtheorem{rem}[thm]{Remark}%[section]
\theoremstyle{definition}
\newtheorem{dfn}[thm]{Definition}%[section]
\newtheorem{exa}[thm]{Example}
\newcommand*{\borel}[1]{{\mathfrak B}(#1)}
\newcommand*{\bscr}{\mathscr{B}}
\newcommand*{\cbb}{\mathbb{C}}
\newcommand*{\dbb}{\mathbb{D}}
\newcommand*{\dbbc}{\bar{\mathbb{D}}}
\newcommand*{\gqb}{\mathcal{Q}}
\newcommand*{\gqbh}{\mathcal{Q}_{\hh_1,\hh_2}}
\newcommand*{\gqbhh}[1]{\mathcal{Q}_{{#1}}}
\newcommand*{\Ge}{\geqslant}
\newcommand*{\hh}{\mathcal{H}}
\newcommand*{\is}[2]{\langle#1,#2\rangle}
\newcommand*{\jd}[1]{\mathscr N(#1)}
\newcommand*{\kk}{\EuScript{K}}
\newcommand*{\Le}{\leqslant}
\newcommand*{\mcal}{\mathscr{M}}
\newcommand*{\nbb}{\mathbb N}
\newcommand*{\ob}[1]{{\mathscr R}(#1)}
\newcommand*{\ogr}[1]{\boldsymbol B(#1)}
\newcommand*{\rbb}{\mathbb R}
\newcommand*{\subE}[1]{{\mathscr S}^{\dag}(#1)}
\newcommand*{\subQ}[1]{{\mathscr S}_{\dag}(#1)}
\newcommand*{\supp}{\mathrm{supp}\,}
\newcommand*{\tbb}{\mathbb{T}}
\newcommand*{\zbb}{\mathbb Z}
\begin{document}
   \title[Taylor spectrum approach to Brownian-type
operators] {Taylor spectrum approach to Brownian-type \\
operators with quasinormal entry}
   \author[S. Chavan]{Sameer Chavan}
   \address{Department of Mathematics and Statistics\\
Indian Institute of Technology Kanpur, India}
   \email{chavan@iitk.ac.in}
   \author[Z.\ J.\ Jab{\l}o\'nski]{Zenon Jan
Jab{\l}o\'nski}
   \address{Instytut Matematyki,
Uniwersytet Jagiello\'nski, ul.\ \L ojasiewicza 6,
PL-30348 Kra\-k\'ow, Poland}
\email{Zenon.Jablonski@im.uj.edu.pl}
   \author[I.\ B.\ Jung]{Il Bong Jung}
   \address{Department of Mathematics,
Kyungpook National University, Daegu 702-701, Korea}
   \email{ibjung@knu.ac.kr}
   \author[J.\ Stochel]{Jan Stochel}
\address{Instytut Matematyki, Uniwersytet
Jagiello\'nski, ul.\ \L ojasiewicza 6, PL-30348
Kra\-k\'ow, Poland} \email{Jan.Stochel@im.uj.edu.pl}
   \thanks{The research of the third author was supported by the National Research
Foundation of Korea (NRF) grant funded by the Korea
government (MSIT) (2018R1A2B6003660)}
   \subjclass[2010]{Primary 47B20, 47B37 Secondary
44A60} \keywords{Upper triangular $2\times 2$ block
matrix, Taylor's spectrum, moment problems, subnormal
operator, $m$-isometry, linear operator pencil}
   \begin{abstract}
In this paper, we introduce operators that are
represented by upper triangular $2\times 2$ block
matrices whose entries satisfy some algebraic
constraints. We call them Brownian-type operators of
class $\gqb,$ briefly operators of class $\gqb.$ These
operators emerged from the study of Brownian
isometries performed by Agler and Stankus via detailed
analysis of the time shift operator of the modified
Brownian motion process. It turns out that the class
$\gqb$ is closely related to the Cauchy dual
subnormality problem which asks whether the Cauchy
dual of a completely hyperexpansive operator is
subnormal. Since the class $\gqb$ is closed under the
operation of taking the Cauchy dual, the problem
itself becomes a part of a more general question of
investigating subnormality in this class. This issue,
along with the analysis of nonstandard moment
problems, covers a large part of the paper. Using the
Taylor spectrum technique culminates in a full
characterization of subnormal operators of class
$\gqb.$ As a consequence, we solve the Cauchy dual
subnormality problem for expansive operators of class
$\gqb$ in the affirmative, showing that the original
problem can surprisingly be extended to a class of
operators that are far from being completely
hyperexpansive. The Taylor spectrum approach turns out
to be fruitful enough to allow us to characterize
other classes of operators including $m$-isometries.
We also study linear operator pencils associated with
operators of class $\gqb$ proving that the
corresponding regions of subnormality are closed
intervals with explicitly described endpoints.
   \end{abstract}
   \maketitle
   \newpage
   \tableofcontents
   \section{Introduction}
Given two complex Hilbert spaces $\hh$ and $\kk,$ we
denote by $\ogr{\hh,\kk}$ the Banach space of all
bounded linear operators from $\hh$ to $\kk$. The
kernel, the range, the adjoint and the modulus of an
operator $T \in \ogr{\hh,\kk}$ are denoted by $\jd T,$
$\ob T,$ $T^*$ and $|T|,$ respectively. We regard
$\ogr{\hh}:=\ogr{\hh,\hh}$ as a $C^*$-algebra. The
identity operator on $\hh$ is denoted by $I_\hh,$ or
simply by $I$ if no ambiguity arises. Recall that an
operator $T\in \ogr{\hh}$ is said to be {\em
quasinormal} if $TT^*T=T^*TT,$ or equivalently if
$T|T|=|T|T.$ We say that $T$ is {\em subnormal} if
there exist a complex Hilbert space $\kk$ and a normal
operator $N\in \ogr{\kk}$ such that $\hh \subseteq
\kk$ (an isometric embedding) and $Sh=Nh$ for all
$h\in \hh.$ It is well known that quasinormal
operators are subnormal (see
\cite[Proposition~II.1.7]{Co}). We refer the reader to
\cite{Co} for more information on these classes of
operators.

Let $T\in \ogr{\hh}$. We say that $T$ is a {\em
$2$-isometry} if $T^{*2}T^{2} - 2 T^*T + I =0.$ We call $T$
a {\em Brownian isometry} if $T$ is a $2$-isometry such
that $\triangle_T \triangle_{T^*} \triangle_T=0,$ where
$\triangle_T =T^*T-I.$ If $\triangle_T \Ge 0$ and
$\triangle_T T = \triangle_T^{1/2} T \triangle_T^{1/2},$ we
say that $T$ is {\em $\triangle_T$-regular}. By a {\em
quasi-Brownian isometry} we mean a $\triangle_T$-regular
$2$-isometry. It is well known that any $2$-isometry is
left-invertible\footnote{\;In this paper,
left-invertibility and invertibility of an operator $T\in
\ogr{\hh}$ refer to the algebra $\ogr{\hh}.$} and
$\triangle_T \Ge 0$ (\cite[Lemma~1]{R-0}). The notion of a
$2$-isometry was invented by Agler in \cite{Ag-0}, while
the notion of a Brownian isometry was introduced by Agler
and Stankus in \cite{Ag-St}. The class of $2$-isometric
operators emerged from the study of the time shift operator
of the modified Brownian motion process from one side
\cite{Ag-St}, and from the investigation of invariant
subspaces of the Dirichlet shift from the other \cite{R-0}.
The class of $\triangle_T$-regular $2$-isometries were
investigated in \cite{Maj,B-S} and in
\cite{A-C-J-S,A-C-J-S-2} under the name of quasi-Brownian
isometries.

Given a left-invertible operator $T\in \ogr{\hh},$ we set
$T^{\prime}=T(T^*T)^{-1}.$ Following \cite{Sh}, we call
$T^{\prime}$ the {\em Cauchy dual operator} of $T.$ Recall
that if $T$ is left-invertible, then so is $T^{\prime}$ and
$T=(T')'.$ Athavale noticed that the Cauchy dual operator
of a completely hyperexpansive injective unilateral
weighted shift is a subnormal contraction (see
\cite[Proposition~ 6]{Ath} with $t=1$), but not conversely
(see \cite[Remark~ 4]{Ath}). The {\em Cauchy dual
subnormality problem} asks whether the Cauchy dual operator
of a completely hyperexpansive operator (see
Section~\ref{Sec8} for the definition) is a subnormal
contraction (see \cite[Question 2.11]{Ch-0}). As shown in
\cite{A-C-J-S}, the answer is in the negative even for
$2$-isometries, that is, there are $2$-isometries whose
Cauchy dual operators are not subnormal (recall that each
$2$-isometry is completely hyperexpansive and that the
Cauchy dual operator of a completely hyperexpansive
operator is always a contraction). However, as proved in
\cite[Theorem~ 4.5]{A-C-J-S}, the Cauchy dual operator
$T^{\prime}$ of a quasi-Brownian isometry $T$ is a
subnormal contraction (see also \cite[Theorem~ 3.4]{B-S}
for a recent generalization of this result to the case of
completely hyperexpansive $\triangle_T$-regular operators).
This leads to the question of why this phenomenon can
happen. We will try to answer it by regarding
quasi-Brownian isometries as elements of a larger class of
operators which is closed under the operation of taking the
Cauchy dual (note that the class of quasi-Brownian
isometries is not closed under this operation). As a
consequence, in the larger class of operators, the Cauchy
dual subnormality problem becomes a part of the more
general question of finding necessary and sufficient
conditions for subnormality.

Let us recall that non-isometric Brownian and
quasi-Brownian isometries have upper triangular $2\times 2$
block matrix representations with entries satisfying some
algebraic constraints (see the remark just after
Definition~\ref{defq}). For the purposes of our paper
explained in the above discussion, we introduce a wider
class of operators consisting of the so-called Brown-type
operators.
   \begin{dfn} \label{defq} We say that
$T\in \ogr{\hh}$ is a {\em Brownian-type operator} if it
has the block matrix form
   \begin{align} \label{brep}
T = \begin{bmatrix} {V } & {E}\\ {0} & {Q}
   \end{bmatrix}
   \end{align}
with respect to a nontrivial\footnote{\;Nontriviality
means that $\hh_1 \neq \{0\}$ and $\hh_2 \neq \{0\}.$}
orthogonal decomposition $\hh=\hh_1 \oplus \hh_2$,
where the operators $V\in \ogr{\hh_1},$ $E\in
\ogr{\hh_2,\hh_1}$ and $Q\in \ogr{\hh_2}$ satisfy the
following conditions:
   \begin{gather}  \label{gqb-1}
\text{$V$ is an isometry, i.e., $V^*V=I,$}
   \\  \label{gqb-2}
V^*E=0,
   \\  \label{gqb-3}
QE^*E=E^*EQ.
   \end{gather}
Moreover, if
   \begin{align}  \label{gqb-4}
\text{$Q$ is quasinormal,}
   \end{align}
we call $T$ a {\em Brownian-type operator of class
$\gqb$} and write
$T =  \big[\begin{smallmatrix} V & E \\
0 & Q \end{smallmatrix}\big] \in \gqbh;$ to simplify
the terminology, we say that $T$ is an {\em operator
of class $\gqb$}. By analogy, if $Q$ is isometric
(resp.\ unitary, normal, etc.), then $T$ is called an
operator of {\em class} $\mathcal{I}$ (resp.\
$\mathcal{U}$, $\mathcal{N},$ etc.). If $\kk$ is a
complex Hilbert space and $\hh=\kk \oplus \kk$
(understood as an external orthogonal sum), then we
abbreviate $\mathcal{Q}_{\kk,\kk}$ to $\gqbhh{\kk}.$
   \end{dfn}
In Definition \ref{defq}, we have decided to exclude
the case when one of the summands $\hh_1$ or $\hh_2$
is absent because otherwise the operator $T$ is
quasinormal. Moreover, by \eqref{gqb-1} and
\eqref{gqb-2}, the hypothesis that $E\neq 0$ excludes
the case when $\hh_1$ is finite dimensional. Notice
also that by the square root theorem
\cite[Theorem~2.4.4]{Sim-4}, the equality
\eqref{gqb-3} is equivalent to $Q|E|=|E|Q.$ One can
deduce from \cite[Proposition~5.37 and
Theorem~5.48]{Ag-St} (resp., \cite[Proposition~
5.1]{Maj}) that a non-isometric operator $T\in
\ogr{\hh}$ is a Brownian isometry (resp., a
quasi-Brownian isometry) if and only if $T$ is of
class $\mathcal{U}$ (resp., of class $\mathcal{I}$)
(to avoid injectivity of $E$ postulated in
\cite[Proposition~5.37]{Ag-St} and
\cite[Proposition~5.1]{Maj}, consult
\cite[Theorem~4.1]{A-C-J-S}). This means that Brownian
isometries are quasi-Brownian isometries. In view of
\cite[Example~ 4.4]{A-C-J-S}, the converse implication
is not true in general.

It is worth pointing out that upper triangular
$2\times 2$ block matrices appear in different parts
of operator theory and functional analysis on the
occasion of investigating variety of topics; for
example, the hyperinvariant subspace problem
\cite{Do-Pe72,Ki12,JKP18,JKP19}, the Halmos similarity
problem for polynomially bounded operators
\cite{Fog64,Pi97}, the task of finding models for the
time shift operator for modified Brownian motion
process \cite{Ag-St}, the question of characterizing
invertibility of upper triangular $2\times 2$ block
matrices \cite{WLee00}, the task of searching for a
model theory for $2$-hyponormal operators
\cite{Cur-Lee02}, the problem of determining a
complete set of unitary invariants for the class of
Cowen-Douglas operators realized as upper triangular
$2\times 2$ block matrices \cite{Mis17}, and many
others.

We state now the main result of this paper which
characterizes subnormality of operators of class
$\gqb$ in terms of the Taylor spectrum
$\sigma(|Q|,|E|)$ of the pair $(|Q|,|E|).$ The
spectral region for subnormality of operators of class
$\gqb$ is described by Theorem~\ref{main}(iii) and
illustrated in Figure~\ref{fig6}. We refer the reader
to Section~\ref{Sec2.5} for the necessary definitions
and notations.
   \begin{thm} \label{main}
Suppose $T = \big[\begin{smallmatrix} V & E \\
0 & Q \end{smallmatrix}\big] \in \gqbh.$ Let $P\in
\ogr{\hh_2}$ be the orthogonal projection of $\hh_2$
onto $\mcal:=\overline{\ob{|E|}}.$ Then the operators
$|Q|,$ $|E|$ and $P$ commute, $\mcal$ reduces $|Q|$
and $|E|,$ and the following conditions are
equivalent{\em :}
   \begin{enumerate}
   \item[(i)] $T$ is subnormal,
   \item[(ii)] $\sigma_{\sharp}(|Q|,|E|) \subseteq \dbbc_+,$
where $\sigma_{\sharp}(|Q|,|E|):=\sigma(|Q|,|E|) \cap
(\rbb_+ \times (0,\infty)),$
   \item[(iii)] $\sigma(|Q|,|E|) \subseteq \dbbc_+ \cup (\rbb_+ \times
\{0\}),$
   \item[(iv)] $(|Q|P,|E|)$  is a spherical contraction,
   \item[(v)] $(|Q|\big|_{\mcal},|E|\big|_{\mcal})$
is a spherical contraction,
   \item[(vi)] $\sigma(|Q|\big|_{\mcal},|E|\big|_{\mcal})
\subseteq \dbbc_+.$
   \end{enumerate}
Moreover, if $T$ is subnormal, then
   \begin{align*}
\sigma(|Q|,|E|) \subseteq \big(\dbbc_+ \cup (\rbb_+ \times
\{0\})\big) \cap \big(\sigma(|Q|) \times \sigma(|E|)\big).
   \end{align*}
   \end{thm}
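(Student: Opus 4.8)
The plan is to prove the asserted inclusion one factor at a time, its right-hand side being an intersection of two sets. The first factor costs nothing. Indeed, the containment
\[
\sigma(|Q|,|E|) \subseteq \dbbc_+ \cup (\rbb_+ \times \{0\})
\]
is \emph{verbatim} condition (iii), so, $T$ being subnormal, it holds by the implication (i)$\Rightarrow$(iii) of the equivalence established above. Thus the only substantive task left is the second factor, namely to show that
\[
\sigma(|Q|,|E|) \subseteq \sigma(|Q|) \times \sigma(|E|).
\]

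For the latter I would rely on the projection property of the Taylor spectrum. The first part of the theorem already guarantees that $|Q|$ and $|E|$ commute, so $(|Q|,|E|)$ is a genuine commuting pair and its Taylor spectrum is defined. The cleanest elementary argument is via the Koszul complex: if $|Q|-\lambda$ is invertible, then the Koszul complex of the pair $(|Q|-\lambda,\,|E|-\mu)$ is exact for every $\mu$, whence $(\lambda,\mu)\notin\sigma(|Q|,|E|)$; the same holds symmetrically when $|E|-\mu$ is invertible. This says precisely that the coordinate projections carry $\sigma(|Q|,|E|)$ into $\sigma(|Q|)$ and $\sigma(|E|)$, respectively, which is the displayed inclusion. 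Conceptually this is the projection property $\pi_j(\sigma(|Q|,|E|))=\sigma(\pi_j(|Q|,|E|))$, the linear case of the spectral mapping theorem for the Taylor spectrum; and since $|Q|,|E|\Ge 0$ are normal, one may alternatively identify $\sigma(|Q|,|E|)$ with the joint spectrum of the commutative $C^*$-algebra they generate and read the inclusion off the spectral theorem.

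Intersecting the two inclusions yields the \emph{Moreover} clause, and I do not anticipate a genuine obstacle: the entire weight of Theorem~\ref{main} resides in the equivalence of (i)--(vi), after which the final inclusion is a formal consequence of (iii) together with a standard structural feature of Taylor's spectrum. The one point meriting care is not to over-assert the product inclusion: in full generality the relationship between a Taylor spectrum and the Cartesian product of its component spectra \emph{is} exactly this projection property, so it should be invoked explicitly rather than treated as obvious. As a consistency check, positivity of $|Q|$ and $|E|$ forces $\sigma(|Q|),\sigma(|E|)\subseteq\rbb_+$, so both factors on the right already lie in $\rbb_+\times\rbb_+$, in harmony with the left-hand side.
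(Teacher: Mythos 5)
Your proposal establishes only the final ``Moreover'' clause, and there it is correct: the first factor of the intersection is verbatim condition (iii), and the second factor $\sigma(|Q|,|E|)\subseteq\sigma(|Q|)\times\sigma(|E|)$ is the projection property of the Taylor spectrum. This is exactly how the paper disposes of that clause (it combines (iii) with the inclusion \eqref{Sam-Jan2Zen}, itself a consequence of \eqref{pro-pr-y}), so on this fragment you and the paper agree.

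The genuine gap is everything else. You appeal to ``the equivalence established above,'' but in a blind proof there is no such thing: the equivalence of (i)--(vi), together with the preliminary claims that $|Q|$, $|E|$ and $P$ commute and that $\mcal$ reduces $|Q|$ and $|E|$, \emph{is} the theorem, and your proposal contains no argument for any of it. In the paper this is the long part of the proof and rests on machinery your sketch never touches: by Proposition~\ref{babuc}(ii), $T^{*n}T^n$ is determined by $\varOmega_n=E^*E\sum_{j=0}^{n-1}(Q^*Q)^j+(Q^*Q)^n$, so Lambert's criterion \eqref{Lam} translates subnormality of $T$ into the requirement that $\{\is{\varOmega_n h}{h}\}_{n=0}^{\infty}$ be a Stieltjes moment sequence for every $h\in\hh_2$; writing $\varOmega_n=\int_{\rbb_+^2}\varphi_n\,dG$ against the joint spectral measure $G$ of $(|Q|,|E|)$, Lemma~\ref{wkw-integ} reduces this to the pointwise condition that $\{\varphi_n(s,t)\}_{n=0}^{\infty}$ be a Stieltjes moment sequence for every $(s,t)\in\supp G=\sigma(|Q|,|E|)$; and the pointwise analysis then identifies the admissible region as $\dbbc_+\cup(\rbb_+\times\{0\})$ --- the case $s=1$ uses that the polynomial sequence $1+nt^2$ is a moment sequence only when $t=0$ (Lemma~\ref{mompr10}), and the case $s\neq1$ uses the signed representing measure $\mu_{s,t}=\frac{t^2}{1-s^2}\delta_1+\big(1-\frac{t^2}{1-s^2}\big)\delta_{s^2}$ together with the determinacy Lemma~\ref{mompr6}, which forces $0\Le\frac{t^2}{1-s^2}\Le1$. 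The remaining equivalences (ii)$\Leftrightarrow$(iv)$\Leftrightarrow$(v)$\Leftrightarrow$(vi) require further spectral-measure computations (e.g., the identity $\int_{\sigma_{\sharp}(|Q|,|E|)}(s^2+t^2)\,G(ds,dt)=(|Q|P)^2+|E|^2$ and Lemma~\ref{main-ad}). None of this is ``a formal consequence'' of the rest; it is the content of the theorem, and without it your proposal proves only an easy addendum to it.
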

   \begin{figure}
   \begin{tikzpicture}[scale=.35, transform shape]
\tikzset{vertex/.style = {shape=circle,draw,minimum
size=1em}} \tikzset{edge/.style = {->,> = latex'}}

\node[] (1) at (-6.5, -6.5) {$\scalebox{1.5}{(0, 0)}$};

\node[] (1) at (0.0, -6.5) {$\scalebox{1.5}{(1, 0)}$};

\node[] (1) at (-6.9, 0.0) {$\scalebox{1.5}{(0, 1)}$};

\draw[->, line width=0.5mm] (-6,-6) -> (4,-6);

\draw[line width=0.5mm] (-6,-6) -> (-6,0);

\draw[->, line width=0.05mm] (-6,0) -> (-6,3);

\def\Radius{6}

\filldraw[fill opacity=0.7, line width=0.5mm,
fill=gray] (-6,-6) -- (0,-6) arc (0:90:6cm) -- cycle;
   \end{tikzpicture}
   \caption{Spectral region for subnormality of operators
of class $\gqb.$} \label{fig6}
   \end{figure}
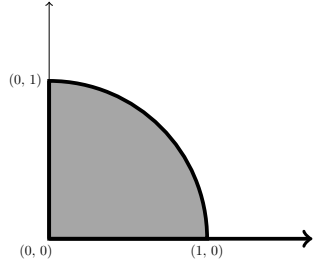
The proof of Theorem~\ref{main} is fairly long and it
occupies most of Sections~\ref{Sec2}, \ref{Sec3}
and~\ref{Sec4}. The theorem itself has many applications
spread over Sections~ \ref{Sec4}, \ref{Sec6} and
\ref{Sec9}. In particular, we show that contractions of
class $\gqb$ are subnormal (see Corollary~\ref{main-cor}),
we solve the Cauchy dual subnormality problem for expansive
operators of class $\gqb$ in the affirmative (see
Corollary~\ref{cd-im-su}) and, what is more important, we
completely characterize subnormality of the Cauchy-duals of
left-invertible operators of class~$\gqb$ (see
Theorem~\ref{main-Ca}). The study of linear operator
pencils associated with operators of class $\gqb$ provides
a useful test of the applicability of the main theorem (see
Theorems~\ref{subex} and \ref{pen-2}).

The Taylor spectrum approach developed in this paper
for the purpose of investigating subnormality turns
out to be efficient when studying other collections of
operators of class $\gqb$ including $m$-contractions,
$m$-isometries, etc. (see Section~\ref{Sec8}). In
fact, it appears to be effective even in providing
explicit formulas for the norm of operators of class
$\gqb$ (see \eqref{norm-u} in Section~\ref{Sec2}) and
for the right endpoints of the intervals of
subnormality of linear operator pencils associated
with operators of class $\gqb$ (see \eqref{Zen-claps}
and \eqref{Zen-claps-2} in Section~\ref{Sec9}). The
Taylor spectrum technique is also applied to
characterize quasi-Brownian and Brownian isometries of
class $\gqb$ in Sections~\ref{Sec7} and \ref{Sec7.5},
respectively. Unexpectedly, the Brownian case is
essentially more complicated. The reader has to be
aware of the fact that quasi-Brownian (and so
Brownian) isometries are always of class $\gqb,$
however relative to properly selected orthogonal
decompositions (of the underlying Hilbert spaces),
which are not necessarily easy to be determined in
concrete cases.

We conclude Introduction by pointing out that the
overwhelming majority of the characterizations of
selected subclasses of the class $\gqb$ that appear in
this paper consist in finding for a given subclass a
minimal universal subset of the Euclidean plane having
the property that an operator $T = \big[\begin{smallmatrix} V & E \\
0 & Q \end{smallmatrix}\big] \in \gqbh$ belongs to the
subclass if and only if the Taylor spectrum
$\sigma(|Q|,|E|)$ of the pair $(|Q|,|E|)$ is contained
in the aforementioned subset. The universality of this
subset lies in the fact that it does not depend on the
choice of the orthogonal decomposition $\hh_1 \oplus
\hh_2$ of the underlying Hilbert space $\hh$ relative
to which a given operator $T\in \ogr{\hh}$ is of class
$\gqb,$ i.e., $T$ has the block matrix form
\eqref{brep} with $V,$ $E$ and $Q$ satisfying
\eqref{gqb-1}-\eqref{gqb-4}. What is more interesting,
there may exist different orthogonal decompositions of
$\hh$ relative to which the given operator $T$ is of
class $\gqb$ and the Taylor spectra $\sigma(|Q|,|E|)$
of the corresponding pairs $(|Q|,|E|)$ are
significantly different (see Example~\ref{wid-dmo-1}).
It turns out that the class of Brownian isometries is
the only subclass of $\gqb$ considered in this paper
which cannot be characterized by the Taylor spectrum
$\sigma(|Q|, |E|)$ of the pair $(|Q|,|E|)$ (see
Remark~\ref{haha-2}).
   \section{\label{Sec2.5}Prerequisites}
   In this section we fix notation and terminology and give
necessary facts. Let $\zbb$, $\rbb$ and $\cbb$ stand for
the sets of integers, real numbers and complex numbers,
respectively. Denote by $\nbb$ the set of positive
integers. Set
   \begin{gather*}
\zbb_+=\{n\in \zbb\colon n \Ge 0\}, \quad
\rbb_+=\{x\in \rbb\colon x\Ge 0\},
   \\
\dbb_+=\{(s, t) \in \rbb_+^2 \colon s^2 + t^2 < 1\},
\quad \tbb_+=\{(s,t)\in \rbb_+^2\colon s^2 + t^2=1\},
   \\
\dbbc_+ = \dbb_+ \cup \tbb_+.
   \end{gather*}
Given a set $X$, we write $\chi_{\varDelta}$ for the
characteristic function of a subset $\varDelta$ of
$X$. The $\sigma$-algebra of all Borel subsets of a
topological space $X$ is denoted by $\borel{X}$. For
$x\in X,$ $\delta_{x}$ stands for the Borel
probability measure on $\rbb$ supported on $\{x\}.$

Let $\hh$ be a complex Hilbert space. We call an operator
$T\in \ogr{\hh}$ a {\em contraction} (resp., an {\em
expansion}) if $\|Th\| \Le \|h\|$ for all $h\in \hh$
(resp., $\|Th\| \Ge \|h\|$ for all $h\in \hh$), or
equivalently if $T^*T \Le I$ (resp., $T^*T \Ge I$). The
contractivity of $T$ can also be characterized by requiring
that $\|T\|\Le 1$ (however $\|T\|\Ge 1$ does not
characterize expansivity of $T$). Obviously, $T$ is an
isometry if and only $T$ is simultaneously a contraction
and an expansion. We write $\sigma(T)$ for the spectrum of
$T$. If $G$ is a regular Borel spectral measure on a
topological Hausdorff space $X$, then $\supp{G}$ denotes
the {\em closed support} of $G,$ i.e., $X\setminus
\supp{G}$ is the largest open subset $\varDelta$ of $X$
such that $G(\varDelta)=0.$ Recall that if $T\in \ogr{\hh}$
is a selfadjoint operator and $G$ is the spectral measure
of $T,$ then $\sigma(T) = \supp{G}.$ The following
elementary fact will be frequently used in this paper.
   \begin{align} \label{font-1}
   \begin{minipage}{70ex}
{\em Suppose that $T\in \ogr{\hh}$ is selfadjoint. If $a,b
\in \rbb$ are such that $a\Le b,$ then $\sigma(T) \subseteq
[a,b]$ if and only if $a I \Le T \Le b I.$ Moreover, if $T
\Ge 0$ and $0\notin \sigma(T),$ then $\min \sigma(T)=
\|T^{-1}\|^{-1}$ and $\max \sigma(T)= \|T\|.$}
   \end{minipage}
   \end{align}
We refer the reader to \cite[Chapter~6]{Bi-So} for more
details on spectral theory of Hilbert space operators.

A pair $(T_1,T_2)$ of commuting operators $T_1,T_2 \in
\ogr{\hh}$ is said to be a {\em spherical contraction}
(resp., {\em spherical expansion}) if $T_1^*T_1 + T_2^*T_2
\Le I$ (resp., $T_1^*T_1 + T_2^*T_2\Ge I$). If $(T_1,T_2)$
is simultaneously spherical contraction and spherical
expansion, that is $T_1^*T_1 + T_2^*T_2 = I$, then
$(T_1,T_2)$ is called a {\em spherical isometry} (see
\cite{Ath90}).

For a pair $(T_1,T_2)$ of commuting operators $T_1,T_2 \in
\ogr{\hh}$, we denote by $\sigma(T_1,T_2)$ the {\em Taylor
spectrum} of $(T_1,T_2),$ and by $r(T_1,T_2)$ the {\em
geometric spectral radius} of $(T_1,T_2),$ that is,
   \begin{align*}
r(T_1,T_2) = \max\Big\{(|z_1|^2+|z_2|^2)^{1/2}\colon
(z_1,z_2) \in \sigma(T_1,T_2)\Big\}.
   \end{align*}
The reader is referred to
\cite{Tay70,Vas82,Cur88,Mu07,Ch-Ze92} for the definitions
and the basic properties of the Taylor spectrum and the
geometric spectral radius (of commuting $n$-tuples of
operators). In particular, the Taylor spectrum
$\sigma(T_1,T_2)$ is a nonempty compact subset of $\cbb^2$
whenever $\hh\neq \{0\}.$ Moreover, it has the following
{\em projection property} (see \cite[Lemma~3.1]{Tay70}; see
also \cite[Theorem~4.9]{Cur88}):
   \begin{align} \label{pro-pr-y}
\pi_j(\sigma(T_1,T_2)) = \sigma(T_j), \quad j=1,2,
   \end{align}
where $\pi_1,\pi_2\colon \cbb^2\to \cbb$ are defined by
$\pi_1(z_1,z_2)=z_1$ and $\pi_2(z_1,z_2)=z_2$ for
$(z_1,z_2)\in \cbb^2.$ The following fact follows directly
from the projection property of the Taylor spectrum.
   \begin{align} \label{tplus0}
   \begin{minipage}{74ex}
{\it Suppose that $\hh\neq \{0\}$ and $\lambda\in \cbb.$
Then $\sigma(T_1,T_2) \subseteq \{\lambda\} \times \cbb$ if
and only if $\sigma(T_1)=\{\lambda\}.$ Moreover, if
$\sigma(T_1)=\{\lambda\},$ then $\sigma(T_1,T_2) =
\{\lambda\} \times \sigma(T_2).$ The symmetric version with
$\cbb \times \{\lambda\}$ in place of $\{\lambda\} \times
\cbb$ holds as well.}
   \end{minipage}
   \end{align}
Note that under the assumptions of \eqref{tplus0},
$\sigma(T_1,T_2) = \sigma(T_1) \times \sigma(T_2)$ if
$\sigma(T_1)=\{\lambda\}$ or if $\sigma(T_2)=\{\lambda\}$.
However, the first equation may not hold even for positive
operators (see \eqref{sryp-u} in
Example~\ref{not-contr-1}).

For a given pair $(T_1,T_2)$ of commuting selfadjoint
operators $T_1,T_2 \in \ogr{\hh}$, there exists a
unique Borel spectral measure $G\colon \borel{\rbb^2}
\to \ogr{\hh}$, called the {\em joint spectral
measure} of $(T_1,T_2),$ such that
   \begin{align} \label{fr-fr-1}
p(T_1,T_2)=\int_{\rbb^2} p(t_1,t_2) G(d t_1, d t_2), \quad
p\in \cbb[x_1,x_2],
   \end{align}
where as usual $\cbb[x_1,x_2]$ stands for the ring of
polynomials in indeterminates $x_1,x_2$ with complex
coefficients (similar notations are used throughout the
paper with no further explanation). The joint spectral
measure $G$ is the product of the spectral measures of
$T_1$ and $T_2$ (see \cite[Theorem~6.5.1]{Bi-So}). As shown
below, in this particular case, the Taylor spectrum
$\sigma(T_1,T_2)$ coincides with the closed support of the
joint spectral measure $G$; this yields the spectral
mapping theorem for continuous functions.\footnote{\;Note
that Theorem~\ref{spmt} remains true for commuting normal
operators with $\cbb$ in place of $\rbb.$ We refer the
reader to \cite[Theorem~4.8]{Tay70-c} (see also
\cite[Theorem~5.19]{Cur88} and
\cite[Corollary~IV.30.11]{Mu07}) for the spectral mapping
theorem for the Taylor functional calculus.}
   \begin{thm} \label{spmt}
Suppose that $T_1,T_2 \in \ogr{\hh}$ are commuting
selfadjoint operators with the joint spectral measure
$G$. Then the following assertions are valid{\em :}
   \begin{enumerate}
   \item[(i)] $\sigma(T_1,T_2) = \supp{G};$
moreover, if $T_1, T_2$ are positive, then
$\sigma(T_1,T_2) \subseteq \rbb_+^2,$
   \item[(ii)] for any continuous function $\psi\colon
\sigma(T_1,T_2) \to \rbb,$
   \begin{align*}
\sigma(\psi(T_1,T_2)) = \psi(\sigma(T_1,T_2)),
   \end{align*}
where $\psi(T_1,T_2) := \int_{\sigma(T_1,T_2)} \psi \,
d G$,
   \item[(iii)] for any continuous function
$\boldsymbol{\psi}=(\psi_1,\psi_2)\colon \sigma(T_1,T_2)
\to \rbb^2,$
   \begin{align*}
\sigma(\boldsymbol{\psi}(T_1,T_2)) =
\boldsymbol{\psi}(\sigma(T_1,T_2)),
   \end{align*}
where $\boldsymbol{\psi}(T_1,T_2):=
(\psi_1(T_1,T_2),\psi_2(T_1,T_2)).$
   \end{enumerate}
   \end{thm}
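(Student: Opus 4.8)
The plan is to prove Theorem~\ref{spmt} by first establishing assertion~(i), then deriving (ii) and (iii) as consequences of the identification $\sigma(T_1,T_2)=\supp G$ together with the functional calculus furnished by the joint spectral measure.

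\medskip

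For assertion~(i), the key fact already supplied in the excerpt is that the joint spectral measure $G$ of the commuting selfadjoint pair $(T_1,T_2)$ is the \emph{product} of the individual spectral measures of $T_1$ and $T_2$ (from \cite[Theorem~6.5.1]{Bi-So}). I would first treat the positivity claim: if $T_1,T_2\Ge 0$, then by \eqref{font-1} each $\sigma(T_j)\subseteq\rbb_+$, and since $G$ is the product measure, $\supp G\subseteq \supp(\text{spectral measure of }T_1)\times\supp(\text{spectral measure of }T_2)=\sigma(T_1)\times\sigma(T_2)\subseteq\rbb_+^2$; granting $\sigma(T_1,T_2)=\supp G$, this gives $\sigma(T_1,T_2)\subseteq\rbb_+^2$. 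The substantive part is the identity $\sigma(T_1,T_2)=\supp G$ itself. The natural route is to invoke the known description of the Taylor spectrum of a commuting tuple of \emph{normal} operators (selfadjoint is a special case) as the closed support of the joint spectral measure; this is precisely the content cited in the footnote via \cite[Theorem~4.8]{Tay70-c}, \cite[Theorem~5.19]{Cur88}, and \cite[Corollary~IV.30.11]{Mu07}. Concretely, one direction follows because a point $(\lambda_1,\lambda_2)\notin\supp G$ admits a neighborhood of zero $G$-measure, on which one builds an explicit Koszul-complex inverse showing $(\lambda_1,\lambda_2)$ lies in the resolvent; the reverse inclusion uses that on each spectral subspace corresponding to a neighborhood of a support point the Koszul complex fails to be exact. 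I expect this identification to be the \textbf{main obstacle}, and the cleanest resolution is to cite the spectral mapping/support characterization for commuting normal tuples rather than reconstruct the Koszul-complex argument.

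\medskip

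Assertion~(ii) then follows by combining (i) with the functional calculus \eqref{fr-fr-1}. Given a continuous real-valued $\psi$ on $K:=\sigma(T_1,T_2)=\supp G$, the operator $\psi(T_1,T_2)=\int_K\psi\,dG$ is selfadjoint, and its spectral measure is the pushforward $G\circ\psi^{-1}$ of $G$ under $\psi$. Using the general principle $\sigma(\psi(T_1,T_2))=\supp(G\circ\psi^{-1})$ for a selfadjoint operator defined by integration against $G$, and the elementary fact that for a compact $K$ and continuous $\psi$ one has $\supp(G\circ\psi^{-1})=\overline{\psi(\supp G)}=\psi(K)$ (the last equality because $\psi(K)$ is already compact, hence closed), I obtain $\sigma(\psi(T_1,T_2))=\psi(\sigma(T_1,T_2))$. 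The only point requiring a little care is that $\psi$ is defined only on $K$; since $K$ is compact and metrizable, Tietze's extension theorem lets me extend $\psi$ continuously to all of $\rbb^2$ without affecting $\int_K\psi\,dG$, which justifies the pushforward computation.

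\medskip

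Finally, assertion~(iii) is the vector-valued refinement and follows from (i) and (ii) applied to the pair $\boldsymbol\psi=(\psi_1,\psi_2)$. The joint spectral measure of the commuting selfadjoint pair $\boldsymbol\psi(T_1,T_2)=(\psi_1(T_1,T_2),\psi_2(T_1,T_2))$ is the pushforward $G\circ\boldsymbol\psi^{-1}$, since each $\psi_j(T_1,T_2)=\int_K\psi_j\,dG$ and the product structure of spectral measures is preserved under the joint functional calculus. Applying part~(i) to this new pair gives $\sigma(\boldsymbol\psi(T_1,T_2))=\supp(G\circ\boldsymbol\psi^{-1})$, and as in (ii) the continuity of $\boldsymbol\psi$ on the compact set $K$ yields $\supp(G\circ\boldsymbol\psi^{-1})=\boldsymbol\psi(K)=\boldsymbol\psi(\sigma(T_1,T_2))$, completing the proof. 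Throughout, the compactness of $\sigma(T_1,T_2)$ (noted in the prerequisites) is what guarantees that the images $\psi(K)$ and $\boldsymbol\psi(K)$ are automatically closed, so no extra closure is needed on the right-hand sides.
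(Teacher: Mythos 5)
Your proposal is correct, and for parts (ii) and (iii) it follows essentially the paper's own route: identify the spectral measure of $\psi(T_1,T_2)$ (resp.\ the joint spectral measure of $\boldsymbol{\psi}(T_1,T_2)$) with the pushforward $G\circ\psi^{-1}$ (resp.\ $G\circ\boldsymbol{\psi}^{-1}$), then use that the support of a pushforward under a continuous map equals the image of the compact support. The paper is a bit more careful at the one point you wave at: your phrase that ``the product structure of spectral measures is preserved under the joint functional calculus'' is precisely what must be verified, and the paper does it by forming the product of the spectral measures of $\psi_1(T_1,T_2)$ and $\psi_2(T_1,T_2)$, checking agreement with $G\circ\boldsymbol{\psi}^{-1}$ on measurable rectangles, and invoking uniqueness of product spectral measures. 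The genuine divergence is in part (i). You propose to cite the known description of the Taylor spectrum of a commuting \emph{normal} tuple as the support of its joint spectral measure (or to reconstruct the Koszul-complex argument behind it), which is legitimate but makes the central identity of the theorem an external citation to the deeper Taylor-calculus theory. The paper instead gives a short, self-contained argument: the Taylor spectrum of the pair coincides with its left spectrum (Curto), the left spectrum of an arbitrary commuting pair coincides with the joint approximate point spectrum, and by Birman--Solomjak the joint lower bound $\|(T_1-\lambda_1 I)h\|+\|(T_2-\lambda_2 I)h\|\Ge c\|h\|$ characterizes exactly the points $(\lambda_1,\lambda_2)\notin\supp{G}$; combining these yields $\sigma(T_1,T_2)=\supp{G}$ without any Koszul machinery. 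Your version buys brevity; the paper's version buys elementarity and self-containedness. Both are valid proofs.
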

   \begin{proof}
First observe that by \eqref{pro-pr-y} we have
   \begin{align} \label{Sam-Jan2Zen}
\sigma(T_1,T_2) \subseteq \sigma(T_1) \times
\sigma(T_2) \subseteq \rbb^2,
   \end{align}
so if additionally $T_1$ and $T_2$ are positive, then
$\sigma(T_1,T_2) \subseteq \rbb_+^2$.

(i) First note that the Taylor spectrum $\sigma(T_1,T_2)$
coincides with the left spectrum of $(T_1,T_2)$ (see
\cite[Proposition~7.2]{Cur88}). It is a routine matter to
show that the left spectrum of $(T_1,T_2)$ coincides with
the approximate point spectrum of $(T_1,T_2)$ (this is true
for an arbitrary pair of commuting Hilbert space
operators). Hence, for $(\lambda_1,\lambda_2) \in \rbb^2$,
$(\lambda_1,\lambda_2) \notin \sigma(T_1,T_2)$ if and only
if there exists $c\in (0,\infty)$ such that
   \begin{align*}
\|(T_1 - \lambda_1 I)h\| + \|(T_2 - \lambda_2 I)h\|
\Ge c\|h\|, \quad h \in \hh,
   \end{align*}
or equivalently, by \cite[Theorem~6.5.3]{Bi-So}, if and
only if $(\lambda_1,\lambda_2) \notin \supp{G}$. Combined
with \eqref{Sam-Jan2Zen}, this proves (i).

(ii) Note that
   \begin{align*}
\sigma(\psi(T_1,T_2)) =
\sigma\bigg(\int_{\sigma(T_1,T_2)} \psi \, d G\bigg)
\overset{(*)}= \psi(\supp{G}) \overset{\mathrm{(i)}}=
\psi(\sigma(T_1,T_2)),
   \end{align*}
where $(*)$ follows from \cite[eq.\ (13), p.\
158]{Bi-So}.

(iii) By \cite[Theorem~6.6.4]{Bi-So}, $G\circ
\psi_j^{-1}$ is the spectral measure of
$\psi_j(T_1,T_2)$ for $j=1,2.$ Let $\widetilde G$ be
the product of these measures (see
\cite[Theorem~5.2.6]{Bi-So}). Since
   \begin{align*}
\widetilde G (\varDelta_1 \times \varDelta_2) &= G
(\psi_1^{-1} (\varDelta_1)) G(\psi_2^{-1} (\varDelta_2)) =
G (\boldsymbol{\psi}^{-1}(\varDelta_1 \times \varDelta_2)),
\quad \varDelta_1, \varDelta_2 \in \borel{\rbb},
   \end{align*}
we deduce from the uniqueness part of
\cite[Theorem~5.2.6]{Bi-So} that $\widetilde G = G\circ
\boldsymbol{\psi}^{-1}.$ Hence $G\circ
\boldsymbol{\psi}^{-1}$ is the joint spectral measure of
the pair $\boldsymbol{\psi}(T_1,T_2).$ This yields
   \begin{align*}
\sigma(\boldsymbol{\psi}(T_1,T_2)) \overset{\mathrm{(i)}}=
\supp{G\circ \boldsymbol{\psi}^{-1}} \overset{(*)}=
\boldsymbol{\psi}(\supp{G}) \overset{\mathrm{(i)}}=
\boldsymbol{\psi}(\sigma(T_1,T_2)).
   \end{align*}
(To get $(*)$ adapt the proof of
\cite[Lemma~3.2]{St-St12}.) This completes the proof.
   \end{proof}
As a consequence of Theorem~\ref{spmt}, we obtain the
following.
   \begin{align} \label{stis-zero}
   \begin{minipage}{72ex}
{\em If $T_1,T_2 \in \ogr{\hh}$ are commuting and
selfadjoint operators, then $\sigma(T_1,T_2) \subseteq
\big(\rbb \times \{0\}\big) \cup \big(\{0\} \times
\rbb\big)$ if and only if $T_1T_2=0,$ or equivalently
if $T_1=0\oplus\widetilde{T}_1$ and $T_2=\widetilde
T_2\oplus 0$ relative to $\hh = \jd{T_1} \oplus
\overline{\ob{T_1}}.$}
   \end{minipage}
   \end{align}
For this, note that $\sigma(T_1,T_2) \subseteq \big(\rbb
\times \{0\}\big) \cup \big(\{0\} \times \rbb\big)$ if and
only if $p(\sigma(T_1,T_2))=0,$ where $p(s,t)=s\cdot t.$
Hence, applying Theorem~\ref{spmt}(ii) gives the former
equivalence in \eqref{stis-zero}. The latter is a matter of
routine verification.

The following lemma is surely folklore. For
self-containedness we sketch its proof (the reader can
easily formulate a version for commuting normal operators).
   \begin{lem} \label{inf-del}
Let $T_1,T_2 \in \ogr{\hh}$ be commuting selfadjoint
operators on a nonze\-ro complex Hilbert space $\hh.$ Then
   \begin{align} \label{impo-ss-0}
r(T_1, T_2) =\|T_1^2 + T_2^2\|^{1/2} = \min\big\{\delta\in
\rbb_+\colon \sigma(T_1, T_2) \subseteq \delta\cdot
\bar\dbb \big\},
   \end{align}
where $\dbb:=\big\{(s,t)\in \rbb^2 \colon s^2+t^2 <
1\big\}.$ Moreover, $T_1^2 + T_2^2$ is invertible if and
only if $\big\{\delta\in (0,\infty) \colon \sigma(T_1, T_2)
\subseteq \rbb^2 \setminus \delta\cdot \dbb\big\} \neq
\emptyset$; if this is the case, then
   \begin{align} \label{impo-ss}
\|(T_1^2 + T_2^2)^{-1}\|^{-1/2} = \max\big\{\delta\in
(0,\infty)\colon \sigma(T_1, T_2) \subseteq \rbb^2
\setminus \delta\cdot \dbb\big\}.
   \end{align}
   \end{lem}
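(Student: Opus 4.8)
The key object is the commuting pair of positive selfadjoint operators $(T_1,T_2)$ together with its joint spectral measure $G$, and the plan is to translate every claim in \eqref{impo-ss-0} and \eqref{impo-ss} into a statement about the closed support $\supp G = \sigma(T_1,T_2)$ via Theorem~\ref{spmt}. The bridge I would use throughout is the continuous function $\varphi\colon \sigma(T_1,T_2)\to \rbb$ given by $\varphi(s,t)=s^2+t^2$, so that $\varphi(T_1,T_2)=T_1^2+T_2^2$. By Theorem~\ref{spmt}(ii) we get the clean identity $\sigma(T_1^2+T_2^2)=\varphi(\sigma(T_1,T_2))=\{s^2+t^2\colon (s,t)\in\sigma(T_1,T_2)\}$, and since $T_1^2+T_2^2$ is a positive selfadjoint operator, \eqref{font-1} identifies its extreme spectral values with operator norms. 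This is the engine driving the whole lemma.

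\textbf{Proving the first chain \eqref{impo-ss-0}.}
First I would establish $r(T_1,T_2)=\|T_1^2+T_2^2\|^{1/2}$. By definition $r(T_1,T_2)=\max\{(|z_1|^2+|z_2|^2)^{1/2}\colon (z_1,z_2)\in\sigma(T_1,T_2)\}$; since the spectrum lies in $\rbb_+^2$ by Theorem~\ref{spmt}(i), the quantity under the max is exactly $\varphi(s,t)^{1/2}$, so $r(T_1,T_2)^2=\max\varphi(\sigma(T_1,T_2))=\max\sigma(T_1^2+T_2^2)=\|T_1^2+T_2^2\|$, the last equality by \eqref{font-1}. For the second equality, note that $\sigma(T_1,T_2)\subseteq\delta\cdot\bar\dbb$ says precisely $s^2+t^2\le\delta^2$ for every $(s,t)\in\sigma(T_1,T_2)$, i.e.\ $\varphi(\sigma(T_1,T_2))\subseteq[0,\delta^2]$, i.e.\ $\sigma(T_1^2+T_2^2)\subseteq[0,\delta^2]$; by \eqref{font-1} this holds iff $T_1^2+T_2^2\le\delta^2 I$, i.e.\ iff $\delta\ge\|T_1^2+T_2^2\|^{1/2}$. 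Hence the set in the minimum is exactly $[\,\|T_1^2+T_2^2\|^{1/2},\infty)$, whose minimum is $\|T_1^2+T_2^2\|^{1/2}$; the minimum is attained because $\sigma(T_1,T_2)$ is compact, so the value $r(T_1,T_2)$ is itself achieved.

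\textbf{Proving \eqref{impo-ss} and the invertibility criterion.}
For the second part, the containment $\sigma(T_1,T_2)\subseteq\rbb^2\setminus\delta\cdot\dbb$ for some $\delta>0$ means $s^2+t^2\ge\delta^2$ on the whole (nonempty) spectrum, equivalently $\varphi(\sigma(T_1,T_2))\subseteq[\delta^2,\infty)$, i.e.\ $\sigma(T_1^2+T_2^2)\subseteq[\delta^2,\infty)$, which by \eqref{font-1} is $T_1^2+T_2^2\ge\delta^2 I>0$. Such a $\delta$ exists iff $0\notin\sigma(T_1^2+T_2^2)$, i.e.\ iff $T_1^2+T_2^2$ is invertible; this gives the stated equivalence. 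When it holds, the admissible $\delta$ are exactly those with $\delta^2\le\min\sigma(T_1^2+T_2^2)=\|(T_1^2+T_2^2)^{-1}\|^{-1}$ (again \eqref{font-1}), so the set in the maximum is $(0,\|(T_1^2+T_2^2)^{-1}\|^{-1/2}]$ and its maximum is $\|(T_1^2+T_2^2)^{-1}\|^{-1/2}$, as claimed.

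\textbf{Anticipated obstacle.}
None of the steps is deep once Theorem~\ref{spmt} is in hand; the only points demanding care are the logical bookkeeping of the min/max (ensuring the extremal $\delta$ actually lies in the described set and that the sets are nonempty, which relies on $\sigma(T_1,T_2)$ being nonempty and compact for $\hh\neq\{0\}$) and the correct application of the min/max halves of \eqref{font-1} to a \emph{positive} operator. The mild subtlety I would flag is matching strict versus non-strict inequalities: $\dbb$ is defined by the strict inequality $s^2+t^2<1$, so $\delta\cdot\bar\dbb$ closes it to $\le\delta^2$ while $\rbb^2\setminus\delta\cdot\dbb$ is $\ge\delta^2$, and these boundary conventions must be tracked so that the suprema and infima are attained rather than merely approached. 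This is purely a matter of routine verification rather than a genuine difficulty.
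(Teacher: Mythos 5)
Your proposal is correct and takes essentially the same approach as the paper: Theorem~\ref{spmt}(ii) applied to $\psi(s,t)=s^2+t^2$ combined with \eqref{font-1}, the only difference being that the paper writes out the argument only for \eqref{impo-ss} and declares \eqref{impo-ss-0} similar, whereas you detail both halves. One caveat: the lemma assumes $T_1,T_2$ are merely selfadjoint, not positive, so you cannot invoke Theorem~\ref{spmt}(i) to place $\sigma(T_1,T_2)$ in $\rbb_+^2$; fortunately this costs nothing, since $\sigma(T_1,T_2)\subseteq\rbb^2$ by the projection property and $|s|^2+|t|^2=s^2+t^2$ for real $(s,t)$.
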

   \begin{proof}
Since the proofs of \eqref{impo-ss-0} and \eqref{impo-ss}
are similar, we justify only \eqref{impo-ss}. Suppose
$T_1^2 + T_2^2$ is invertible. If $\delta \in (0,\infty)$
is such that $\sigma(T_1, T_2) \subseteq \rbb^2 \setminus
\delta\cdot \dbb,$ then by Theorem~\ref{spmt}(ii) with
$\psi(x_1,x_2) = x_1^2 + x_2^2$, we have
   \begin{align*}
\sigma(T_1^2 + T_2^2)=\sigma(\psi(T_1, T_2)) =
\psi(\sigma(T_1, T_2)) \subseteq [\delta^2, \infty),
   \end{align*}
which implies that $T_1^2 + T_2^2$ is invertible and
   \begin{align*}
\delta^2 \Le \min \sigma(T_1^2 + T_2^2)
\overset{\eqref{font-1}}= \|(T_1^2 + T_2^2)^{-1}\|^{-1}.
   \end{align*}
Reversing the argument with $\delta=\|(T_1^2 +
T_2^2)^{-1}\|^{-1/2}$, we obtain the converse implication
and~\eqref{impo-ss}. This completes the proof.
   \end{proof}
   We now describe the Taylor spectrum of an orthogonal sum
of pairs of commuting selfadjoint operators.
   \begin{pro}\label{sum0or-1}
Suppose that for every $n\in \nbb,$ $(T_{1,n},
T_{2,n})$ is a pair of commuting selfadjoint operators
on a nonzero complex Hilbert space $\hh_{n}.$ For
$j=1,2,$ let $T_j=\bigoplus_{n=1}^{\infty} T_{j,n}.$
Then $(T_1,T_2)$ is a pair of commuting selfadjoint
operators such~that
   \begin{align} \label{Il-pierze}
\sigma(T_1,T_2) = \overline{\bigcup_{n=1}^{\infty}
\sigma(T_{1,n},T_{2,n})}.
   \end{align}
   \end{pro}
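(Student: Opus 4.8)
The plan is to reduce the statement to a fact about closed supports of joint spectral measures, for which Theorem~\ref{spmt}(i) is tailor-made. First I would dispose of the routine part: assuming $\sup_{n}\|T_{j,n}\|<\infty$ (which is forced by $T_j\in\ogr{\hh}$), each $T_j=\bigoplus_{n=1}^{\infty} T_{j,n}$ is a bounded selfadjoint operator, and $T_1T_2=\bigoplus_n T_{1,n}T_{2,n}=\bigoplus_n T_{2,n}T_{1,n}=T_2T_1$, so $(T_1,T_2)$ is a commuting selfadjoint pair to which the joint spectral calculus applies.

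The key step is to identify the joint spectral measure $G$ of $(T_1,T_2)$. Let $G_n$ be the joint spectral measure of $(T_{1,n},T_{2,n})$, and set $\widehat G:=\bigoplus_n G_n$, i.e.\ $\widehat G(\varDelta)=\bigoplus_n G_n(\varDelta)$ for $\varDelta\in\borel{\rbb^2}$; this is a well-defined $\ogr{\hh}$-valued Borel spectral measure. For every polynomial $p\in\cbb[x_1,x_2]$ one has $p(T_1,T_2)=\bigoplus_n p(T_{1,n},T_{2,n})=\bigoplus_n\int p\,dG_n=\int p\,d\widehat G$, so by the uniqueness part of \eqref{fr-fr-1} we get $G=\widehat G$. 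By Theorem~\ref{spmt}(i) it therefore suffices to prove the purely measure-theoretic identity $\supp{\widehat G}=\overline{\bigcup_n\supp{G_n}}$, since the same theorem supplies $\supp{G_n}=\sigma(T_{1,n},T_{2,n})$ for each $n$ and $\supp{G}=\sigma(T_1,T_2)$.

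For that identity I would argue through complements. For an open set $U\subseteq\rbb^2$ one has $\widehat G(U)=0$ if and only if $G_n(U)=0$ for every $n$; and for a single $n$, since $\rbb^2\setminus\supp{G_n}$ is by definition the largest open $G_n$-null set, $G_n(U)=0$ is equivalent to $U\cap\supp{G_n}=\emptyset$. Hence a point $x$ lies outside $\supp{\widehat G}$ exactly when some open neighbourhood $U$ of $x$ misses every $\supp{G_n}$, which is precisely the assertion that $x\notin\overline{\bigcup_n\supp{G_n}}$. This yields \eqref{Il-pierze}.

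The genuinely non-formal direction here is $\sigma(T_1,T_2)\subseteq\overline{\bigcup_n\sigma(T_{1,n},T_{2,n})}$, and I expect it to be the main obstacle: a direct approximate-point-spectrum attack is awkward because the approximating unit vectors for a point $(\lambda_1,\lambda_2)\in\sigma(T_1,T_2)$ may be smeared across infinitely many summands, so one cannot simply localize them in a single block. Routing everything through the spectral measure, where $\bigoplus_n G_n(U)=0$ transparently forces each block to be null, sidesteps this difficulty; the only points demanding care are the uniqueness invocation giving $G=\bigoplus_n G_n$ and the elementary but essential fact that $\rbb^2\setminus\supp{G_n}$ is the maximal open $G_n$-null set.
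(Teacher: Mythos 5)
Your proposal is correct and takes essentially the same approach as the paper: both identify the joint spectral measure of $(T_1,T_2)$ as $\bigoplus_{n=1}^{\infty} G_n$ via a uniqueness argument, then establish $\supp\big(\bigoplus_{n=1}^{\infty} G_n\big)=\overline{\bigcup_{n=1}^{\infty}\supp{G_n}}$ by the same open-neighbourhood argument, and conclude via Theorem~\ref{spmt}(i). The only cosmetic difference is that the paper obtains $G=\bigoplus_{n=1}^{\infty} G_n$ from the product structure of joint spectral measures (uniqueness of the product of the spectral measures of $T_1$ and $T_2$), whereas you obtain it from the uniqueness clause attached to \eqref{fr-fr-1}; both are legitimate within the paper's framework.
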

   \begin{proof}
Set $\boldsymbol T = (T_1,T_2)$ and $\boldsymbol T_n =
(T_{1,n},T_{2,n})$ for $n\in \nbb.$ Denote by
$G_{\boldsymbol T}$ and $G_{\boldsymbol T_n}$ the joint
spectral measures of $\boldsymbol T$ and $\boldsymbol T_n,$
respectively. Let $G_{T_j}$ and $G_{T_{j,n}}$ be the
spectral measures of $T_j$ and $T_{j,n},$ respectively,
where $j=1,2$ and $n\in \nbb$. It is clear that
   \begin{align*}
G_{T_j} (\varDelta) = \bigoplus_{n=1}^{\infty}
G_{T_{j,n}}(\varDelta), \quad \varDelta \in \borel{\rbb},
\, j=1,2.
   \end{align*}
This implies that
   \begin{align*}
G_{T_1}(\varDelta_1) G_{T_2}(\varDelta_2) =
\bigoplus_{n=1}^{\infty} G_{\boldsymbol
T_n}(\varDelta_1 \times \varDelta_2), \quad
\varDelta_1, \varDelta_2 \in \borel{\rbb}.
   \end{align*}
Combined with the uniqueness of joint spectral
measures, this yields
   \begin{align} \label{gsumo-1}
G_{\boldsymbol T}(\varDelta) =
\bigoplus_{n=1}^{\infty} G_{\boldsymbol
T_n}(\varDelta), \quad \varDelta \in \borel{\rbb^2}.
   \end{align}
In view of Theorem~\ref{spmt}(i), it suffices to show
that
   \begin{align} \label{gsumo-2}
\supp{G}_{\boldsymbol T} =
\overline{\bigcup_{n=1}^{\infty} \supp G_{\boldsymbol
T_n}}.
   \end{align}
For this, take $(s,t) \in \rbb^2.$ If $(s,t) \notin
\supp{G}_{\boldsymbol T},$ then there exists an open
set $\varDelta$ in $\rbb^2$ such that $(s,t)\in
\varDelta$ and $G_{\boldsymbol T}(\varDelta)=0.$
Therefore by \eqref{gsumo-1}, $G_{\boldsymbol
T_n}(\varDelta)=0$ for all $n\in \nbb,$ which implies
that $(s,t) \notin \supp{G}_{\boldsymbol T_n}$ for all
$n\in \nbb.$ As a consequence, $\bigcup_{n=1}^{\infty}
\supp G_{\boldsymbol T_n} \subseteq
\supp{G}_{\boldsymbol T},$ which shows that the right
side of \eqref{gsumo-2} is contained in the left side.
In turn, if $(s,t) \notin
\overline{\bigcup_{n=1}^{\infty} \supp G_{\boldsymbol
T_n}},$ then there exists an open set $\varDelta$ in
$\rbb^2$ such that $(s,t)\in \varDelta$ and $\varDelta
\cap \overline{\bigcup_{n=1}^{\infty} \supp
G_{\boldsymbol T_n}}=\emptyset.$ Hence,
$G_{\boldsymbol T_n}(\varDelta)=0$ for all $n\in
\nbb,$ which together with \eqref{gsumo-1} implies
that $G_{\boldsymbol T}(\varDelta)=0.$ As a
consequence, $(s,t) \notin \supp{G}_{\boldsymbol T}.$
This completes the proof.
   \end{proof}
   \begin{cor} \label{arva-1}
If $\varGamma$ is an arbitrary nonempty compact subset of
$\rbb^2$ $($resp.\ $\rbb_+^2$$)$ and $\hh$ is a separable
infinite dimensional complex Hilbert space, then there
exists a pair $(T_1,T_2)$ of commuting selfadjoint
$($resp.\ positive$)$ operators $T_1,T_2\in \ogr{\hh}$ such
that $\varGamma=\sigma(T_1,T_2).$
   \end{cor}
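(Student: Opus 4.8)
The plan is to construct the desired pair $(T_1,T_2)$ by assembling, over a countable dense subset of $\varGamma$, simple rank-one (or one-dimensional) pieces whose Taylor spectra are single points, and then to invoke Proposition~\ref{sum0or-1} to compute the Taylor spectrum of their orthogonal sum. The guiding principle is \eqref{tplus0} together with Theorem~\ref{spmt}(i): for a pair of scalar multiples $(\lambda_1 I, \lambda_2 I)$ acting on a one-dimensional space, the joint spectral measure is a point mass at $(\lambda_1,\lambda_2)$, so the Taylor spectrum is exactly the singleton $\{(\lambda_1,\lambda_2)\}$.

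First I would fix a countable dense subset $\{(s_n,t_n)\colon n\in \nbb\}$ of $\varGamma$; such a set exists since $\varGamma \subseteq \rbb^2$ is a separable metric space. For each $n$ I would let $\hh_n$ be a one-dimensional complex Hilbert space and set $T_{1,n}=s_n I_{\hh_n}$ and $T_{2,n}=t_n I_{\hh_n}$. These are commuting selfadjoint operators (and positive precisely when $(s_n,t_n)\in \rbb_+^2$, which is guaranteed in the $\rbb_+^2$ case since then $\varGamma\subseteq \rbb_+^2$). By the remark above, $\sigma(T_{1,n},T_{2,n}) = \{(s_n,t_n)\}$. Forming $T_j = \bigoplus_{n=1}^{\infty} T_{j,n}$ for $j=1,2$ on $\hh_0 := \bigoplus_{n=1}^{\infty}\hh_n$, Proposition~\ref{sum0or-1} yields
   \begin{align*}
\sigma(T_1,T_2) = \overline{\bigcup_{n=1}^{\infty}\{(s_n,t_n)\}} =
\overline{\{(s_n,t_n)\colon n\in \nbb\}} = \varGamma,
   \end{align*}
the last equality by density of the chosen set in the compact (hence closed) set $\varGamma$. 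The operators $T_1,T_2$ are bounded because $\varGamma$ is compact, hence the coordinates $s_n,t_n$ are uniformly bounded; they are commuting and selfadjoint (resp.\ positive) as orthogonal sums of such.

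The one remaining point is to reconcile the underlying space. The construction lives on the separable infinite-dimensional space $\hh_0$, but the statement prescribes an arbitrary separable infinite-dimensional $\hh$; since all such spaces are unitarily isomorphic, I would transport $(T_1,T_2)$ along any unitary $U\colon \hh_0 \to \hh$, replacing $T_j$ by $U T_j U^{*}$. The Taylor spectrum is a unitary invariant, so the spectrum is unchanged, and selfadjointness (resp.\ positivity) and commutativity are preserved. I do not anticipate a genuine obstacle here; the only mild care needed is that $\hh_0$ is infinite-dimensional (which holds since we take countably many nonzero summands indexed by a dense, hence infinite, subset of the nonempty $\varGamma$) so that the unitary identification with $\hh$ is legitimate. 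This completes the construction.
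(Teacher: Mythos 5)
Your proposal is essentially the paper's own proof: the authors likewise take a dense sequence $\{(x_{1,n},x_{2,n})\}_{n=1}^{\infty}$ in $\varGamma$, set $T_{j,n}=x_{j,n}I_{\cbb}$ on one-dimensional spaces, observe via \eqref{tplus0} that each $\sigma(T_{1,n},T_{2,n})$ is the singleton $\{(x_{1,n},x_{2,n})\}$, and then apply Proposition~\ref{sum0or-1}, with the unitary transport to a prescribed $\hh$ left implicit. One small correction: a dense \emph{subset} of $\varGamma$ need not be infinite (take $\varGamma$ finite), so your parenthetical ``dense, hence infinite'' fails in that case; to keep the orthogonal sum acting on an infinite-dimensional space you should, as the paper's formulation automatically does, use a dense \emph{sequence} indexed by $\nbb$ with repetitions allowed.
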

   \begin{proof}
Since $\rbb^2$ is separable metric space, so is
$\varGamma.$ Hence, there exists a sequence
$\{(x_{1,n},x_{2,n})\}_{n=1}^{\infty} \subseteq \varGamma$
which is dense in $\varGamma.$ The proof is completed by
applying Proposition~\ref{sum0or-1} to $\hh_n=\cbb,$
$T_{1,n}= x_{1,n} I_{\cbb}$ and $T_{2,n}= x_{2,n} I_{\cbb}$
and by observing that according to \eqref{tplus0},
$\sigma(T_{1,n},T_{2,n}) = \{(x_{1,n},x_{2,n})\}$ for all
$n\in\nbb.$
   \end{proof}
   \begin{rem} \label{many-seu-2}
 A closer inspection of the proof reveals that
Proposition~\ref{sum0or-1} remains valid for families
(of arbitrary cardinality) of pairs of commuting
normal operators. As a consequence,
Corollary~\ref{arva-1} remains true if selfadjoint
operators are replaced by normal operators and $\rbb$
by $\cbb.$ What is more, using only the definition of
the Taylor spectrum, one can show that
\eqref{Il-pierze} holds (certainly without the
closure) for any finite number of pairs of commuting
operators (cf.\ \cite{Cur80}).
   \hfill $\diamondsuit$
   \end{rem}
   \section{\label{Sec2}Fundamental properties of operators of class $\gqb$}
   In this section we prove some basic properties of
operators of class $\gqb$ that are needed in this
paper. We begin by showing that the operators of class
$\gqb$ form a huge class which can be parameterized in
a sense by arbitrary pairs of commuting positive
operators.
   \begin{pro} \label{mem-ber-1}
Let $\hh=\hh_1 \oplus \hh_2$ be a nontrivial
orthogonal decomposition of a complex Hilbert space
$\hh.$ Then the following assertions are valid{\em :}
   \begin{enumerate}
   \item[(i)] if $T =  \big[\begin{smallmatrix} V & E \\
0 & Q \end{smallmatrix}\big] \in \gqbh,$ then $|Q|$
and $|E|$ are commuting positive operators such that
$\dim{\overline{\ob{|E|}}} \Le \dim{\ob{V}^{\perp}},$
   \item[(ii)] if $V\in\ogr{\hh_1}$ is an isometry and
$A,B\in \ogr{\hh_2}$ are commuting positive operators
such that $\dim{\overline{\ob{B}}} \Le
\dim{\ob{V}^{\perp}},$ then there exists $E\in
\ogr{\hh_2,\hh_1}$ such that
$T =  \big[\begin{smallmatrix} V & E \\
0 & A \end{smallmatrix}\big] \in \gqbh$ and $|E|=B.$
   \end{enumerate}
Moreover, if $T =  \big[\begin{smallmatrix} V & E \\
0 & Q \end{smallmatrix}\big] \in \gqbh,$ then there
exists $\widetilde E \in \ogr{\hh_2,\hh_1}$ such that
$\big[\begin{smallmatrix} V & \widetilde E \\
0 & |Q| \end{smallmatrix}\big] \in \gqbh$ and
$|\widetilde E| = |E|.$
   \end{pro}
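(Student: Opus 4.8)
The plan is to take $\widetilde E := E$ and observe that replacing $Q$ by $|Q|$ leaves conditions \eqref{gqb-1} and \eqref{gqb-2} untouched, so the only thing requiring work is condition \eqref{gqb-3} for the new pair, and the decisive point is that $|Q|$ commutes with $|E|$. I would first record this commutation relation. By \eqref{gqb-3} and the square root theorem \cite[Theorem~2.4.4]{Sim-4} (exactly as noted just after Definition~\ref{defq}) we have $Q|E| = |E|Q$. Taking adjoints and using that $|E|$ is selfadjoint gives $Q^*|E| = |E|Q^*$ as well. Hence $Q^*Q$ commutes with $|E|$, that is $|Q|^2|E| = |E||Q|^2$, and a second application of the square root theorem to the positive operator $|Q|^2 = Q^*Q$ yields that $|Q|$ commutes with $|E|$, equivalently $|Q|\,E^*E = E^*E\,|Q|$.

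With this relation in hand I would simply verify that $\widetilde E = E$ does the job. The entry $V$ is unchanged, so \eqref{gqb-1} persists; condition \eqref{gqb-2} for the new matrix reads $V^*\widetilde E = V^*E = 0$, which is the original \eqref{gqb-2}; the commutation relation just established is precisely \eqref{gqb-3} for the pair $(|Q|,\widetilde E)$; and \eqref{gqb-4} is automatic since $|Q|\Ge 0$ is selfadjoint and every selfadjoint (indeed normal) operator is quasinormal. Thus $\big[\begin{smallmatrix} V & \widetilde E \\ 0 & |Q| \end{smallmatrix}\big] \in \gqbh$, and trivially $|\widetilde E| = |E|$.

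The only genuine content is the commutation $|Q||E| = |E||Q|$, so the main (and essentially the sole) obstacle is the passage from ``$Q$ commutes with $|E|$'' to ``$|Q|$ commutes with $|E|$''. The adjoint trick is what makes this work: commutation of $Q$ with the selfadjoint $|E|$ transfers to $Q^*$, hence to $Q^*Q = |Q|^2$, and the square root theorem then transfers it to $|Q|$. It is worth remarking that quasinormality of $Q$ is not actually invoked here---only \eqref{gqb-3} is used---although it is of course part of the hypothesis $T \in \gqbh$.

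Alternatively, one could avoid the explicit choice $\widetilde E = E$ and instead invoke part (ii): applying it to the isometry $V$ together with the commuting positive operators $|Q|$ and $|E|$ (their commutativity being the relation above, and the constraint $\dim\overline{\ob{|E|}} \Le \dim\ob{V}^{\perp}$ being supplied by part (i) applied to the original $T$) produces some $\widetilde E$ with $|\widetilde E| = |E|$ and $\big[\begin{smallmatrix} V & \widetilde E \\ 0 & |Q| \end{smallmatrix}\big] \in \gqbh$. I would nonetheless favor the direct route, since $\widetilde E = E$ already satisfies every requirement and the commutation relation is needed in either approach.
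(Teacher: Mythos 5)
Your verification of the ``moreover'' part is correct, and it takes a genuinely different route from the paper's. The paper obtains $\widetilde E$ by combining parts (i) and (ii): it applies (ii) with $A=|Q|$ and $B=|E|$, which yields $\widetilde E=U|E|$ for a partial isometry $U$ whose final space is a subspace of $\ob{V}^{\perp}$, so the resulting $\widetilde E$ is in general not $E$. You prove the sharper assertion that $\widetilde E=E$ itself works. Your commutation argument --- $Q|E|=|E|Q$ from \eqref{gqb-3} and the square root theorem, then the adjoint trick to get $|Q|^{2}|E|=|E||Q|^{2}$, then the square root theorem once more --- is precisely the argument the paper compresses into the one-line opening of its proof of (i), and your remark that quasinormality of $Q$ is never used there is accurate (quasinormality matters only in that $|Q|$ must itself be an admissible lower-right entry, which is automatic since positive operators are quasinormal). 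What your route buys is economy: no partial isometry, no dimension count. What the paper's route buys is that the ``moreover'' part follows with no additional verification once (i) and (ii) are in place.

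The gap is one of coverage: the statement also contains the inequality $\dim\overline{\ob{|E|}}\Le\dim\ob{V}^{\perp}$ in (i) and all of part (ii), and your proposal proves neither. The paper gets the inequality from the polar decomposition $E=U|E|$: the partial isometry $U$ maps $\overline{\ob{|E|}}$ unitarily onto $\overline{\ob{E}}$, and $\overline{\ob{E}}\subseteq\ob{V}^{\perp}$ by \eqref{gqb-2}. Part (ii) requires a construction: pick a closed subspace $\mathcal{M}\subseteq\ob{V}^{\perp}$ with $\dim\mathcal{M}=\dim\overline{\ob{B}}$, let $U$ be the partial isometry with initial space $\overline{\ob{B}}$ and final space $\mathcal{M}$, and set $E=UB$; then $E^{*}E=B(U^{*}U)B=B^{2}$, so $|E|=B$ by uniqueness of positive square roots, and the conditions \eqref{gqb-1}--\eqref{gqb-4} are readily checked for $\big[\begin{smallmatrix} V & E \\ 0 & A \end{smallmatrix}\big]$. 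Note also that your alternative route to the ``moreover'' part presupposes (ii), which you have not established, so only your direct route stands on its own; it should be supplemented with proofs of the two remaining assertions to cover the whole proposition.
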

   \begin{proof}
(i) That $|Q|$ and $|E|$ commute follows from \eqref{gqb-3}
and the square root theorem. Let $E=U|E|$ be the polar
decomposition of $E.$ Then $U$ maps ${\overline{\ob{|E|}}}$
unitarily onto $\overline{\ob{E}}.$ Since by \eqref{gqb-2},
$\overline{\ob{E}} \subseteq \ob{V}^\perp ,$ we are done.

(ii) Since $\dim{\overline{\ob{B}}} \Le
\dim{\ob{V}^{\perp}},$ there exist a closed subspace
$\mathcal M$ of $\ob{V}^\perp$ such that $\dim
\overline{\ob{B}} = \dim \mathcal M.$ Let $U \in
\ogr{\hh_2,\hh_1}$ be a unique partial isometry with the
initial space $\overline{\ob{B}}$ and the final space
$\mathcal M.$ Define $E\in \ogr{\hh_2,\hh_1}$ by $E=UB.$
Since $U^*U$ is the orthogonal projection of $\hh_2$ onto
the initial space $\overline{\ob{B}}$ of $U,$ we get
   \begin{align*}
|E|^2=E^*E = B (U^*U)B = B^2.
   \end{align*}
By the uniqueness of the square root, we deduce that
$|E|=B$. It is easily seen that $T =
\big[\begin{smallmatrix} V & E \\
0 & A \end{smallmatrix}\big] \in \gqbh.$

The ``moreover'' part is a direct consequence of (i)
and (ii). This completes the proof.
   \end{proof}
   \begin{cor} \label{bielan-1}
Suppose that $\hh_2$ is a nonzero complex Hilbert
space and $A,B\in \ogr{\hh_2}$ are commuting positive
operators. Then there exist a nonzero complex Hilbert
space $\hh_1,$ an isometry $V\in \ogr{\hh_1}$ and an
operator $E\in \ogr{\hh_2, \hh_1}$ such that $T =
\big[\begin{smallmatrix} V & E \\
0 & A \end{smallmatrix}\big] \in \gqbh$ $($relative to
$\hh=\hh_1\oplus \hh_2$$)$ and $|E|=B.$
   \end{cor}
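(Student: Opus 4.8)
The plan is to reduce Corollary~\ref{bielan-1} to
Proposition~\ref{mem-ber-1}(ii) by constructing an
appropriate isometry $V$ whose cokernel is large enough to
host (an isometric copy of) $\overline{\ob{B}}$. The
content of the corollary beyond the proposition is precisely
that we are now free to \emph{choose} both $\hh_1$ and $V,$
rather than having them handed to us; so the only thing to
verify is that the dimension inequality
$\dim \overline{\ob{B}} \Le \dim \ob{V}^{\perp}$ required by
Proposition~\ref{mem-ber-1}(ii) can always be met, after
which that proposition delivers the desired $E$ with
$|E|=B$ directly.

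First I would put $d := \dim \overline{\ob{B}},$ a cardinal
number with $0 \Le d \Le \dim \hh_2.$ The strategy is to
build $V$ as a unilateral shift of sufficiently high
multiplicity so that its cokernel $\ob{V}^{\perp}$ has
dimension at least $d.$ Concretely, take any nonzero
Hilbert space $\mathcal{L}$ with
$\dim \mathcal{L} = \max\{1,d\}$ and set
$\hh_1 = \ell^2(\zbb_+,\mathcal{L}),$ the space of
square-summable $\mathcal{L}$-valued sequences indexed by
$\zbb_+,$ with $V$ the forward shift
$V(x_0,x_1,x_2,\dots) = (0,x_0,x_1,\dots).$ Then $V$ is an
isometry, $\hh_1 \neq \{0\},$ and
$\ob{V}^{\perp} = \mathcal{L} \oplus \{0\} \oplus \cdots$ has
$\dim \ob{V}^{\perp} = \dim \mathcal{L} \Ge d =
\dim \overline{\ob{B}}.$ This furnishes exactly the
hypothesis of Proposition~\ref{mem-ber-1}(ii) with
$A,B$ the given commuting positive operators.

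With $V$ so chosen, Proposition~\ref{mem-ber-1}(ii) (applied
to this $\hh_1,$ $V,$ and the pair $A,B$) produces an
operator $E\in \ogr{\hh_2,\hh_1}$ with
$T = \big[\begin{smallmatrix} V & E \\ 0 & A
\end{smallmatrix}\big] \in \gqbh$ relative to
$\hh=\hh_1\oplus\hh_2$ and $|E|=B,$ which is precisely the
conclusion. I do not anticipate a genuine obstacle here; the
result is essentially a packaging of
Proposition~\ref{mem-ber-1}(ii) in which the ambient
isometry is no longer prescribed. The one point deserving a
word of care is the cardinal arithmetic hiding the two
degenerate cases: if $d=0,$ i.e.\ $B=0,$ one still needs a
nonzero $\hh_1,$ which is why I take
$\dim \mathcal{L} = \max\{1,d\}$ so that $\hh_1 \neq \{0\}$
regardless; and if $d$ is infinite one simply uses a
unilateral shift of that infinite multiplicity, the
inequality $\dim \ob{V}^{\perp} \Ge d$ holding with equality.
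In all cases the single shift construction suffices, so no
case distinction beyond this choice of $\dim \mathcal{L}$ is
needed.
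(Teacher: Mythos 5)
Your proof is correct and follows essentially the same route as the paper: both reduce the corollary to Proposition~\ref{mem-ber-1}(ii) by producing an isometry $V$ with $\dim\ob{V}^{\perp}\Ge\dim\overline{\ob{B}}$. The only difference is cosmetic --- the paper splits into the cases $B=0$ and $B\neq 0$ (taking an abstract infinite-dimensional $\hh_1$ in the latter), while you unify both cases with an explicit unilateral shift of multiplicity $\max\{1,\dim\overline{\ob{B}}\}$ on $\ell^2(\zbb_+,\mathcal{L})$.
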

   \begin{proof}
If $B=0,$ then we can apply
Proposition~\ref{mem-ber-1}(ii) to any nonzero complex
Hilbert space $\hh_1$ and an arbitrary isometry $V\in
\ogr{\hh_1}.$ In turn, if $B\neq 0,$ then we can take
an infinite dimensional complex Hilbert space $\hh_1$
such that $\dim{\overline{\ob{B}}} \Le \dim{\hh_1}.$
Then there exists an isometry $V\in \ogr{\hh_1}$ such
that
   \begin{align*}
\dim{\overline{\ob{B}}} \Le \dim{\hh_1} =
\dim{\ob{V}^{\perp}}.
   \end{align*}
Applying Proposition~\ref{mem-ber-1}(ii) completes the
proof.
   \end{proof}
   The theorem below is crucial for our further
investigations because the overwhelming majority of
results of this paper are stated in terms of the
Taylor spectrum of the pair $(|Q|,|E|).$
   \begin{thm} \label{bielan-2}
Suppose that $\varGamma$ is an arbitrary nonempty compact
subset of $\rbb_+^2$ and $\hh_2$ is a separable infinite
dimensional complex Hilbert space. Then there exists a
nonzero complex Hilbert
space $\hh_1$ and $T =  \big[\begin{smallmatrix} V & E \\
0 & Q \end{smallmatrix}\big] \in \gqbh$ $($relative to
$\hh=\hh_1\oplus \hh_2$$)$ such that $\sigma(|Q|,|E|) =
\varGamma.$
   \end{thm}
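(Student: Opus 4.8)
The plan is to obtain the statement as an immediate synthesis of Corollary~\ref{arva-1} and Corollary~\ref{bielan-1}. The former realizes the prescribed compact set $\varGamma$ as the Taylor spectrum of a suitable pair of commuting positive operators acting on $\hh_2$, while the latter promotes any such pair to an operator of class $\gqb$ whose pair of moduli $(|Q|,|E|)$ reproduces it. Thus the whole argument reduces to chaining these two results and performing one elementary identification.

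First I would apply Corollary~\ref{arva-1} to the separable infinite dimensional Hilbert space $\hh_2$ (playing the role of $\hh$ there) and to the nonempty compact set $\varGamma \subseteq \rbb_+^2$. This produces commuting positive operators $A,B\in \ogr{\hh_2}$ with $\sigma(A,B)=\varGamma$. Next I would feed the pair $(A,B)$ into Corollary~\ref{bielan-1}: since $A,B$ are commuting positive operators on the nonzero space $\hh_2$, that corollary yields a nonzero complex Hilbert space $\hh_1$, an isometry $V\in \ogr{\hh_1}$, and an operator $E\in \ogr{\hh_2,\hh_1}$ such that $T = \big[\begin{smallmatrix} V & E \\ 0 & A \end{smallmatrix}\big] \in \gqbh$ relative to $\hh=\hh_1\oplus\hh_2$ and $|E|=B$. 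In particular $Q=A$, and membership in the class $\gqb$ (hence quasinormality of $Q$) is already guaranteed by the corollary, as $A$ is positive and therefore quasinormal.

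The only point deserving a brief remark is the identification of the pair of moduli. Since $A$ is positive, $|Q|=|A|=A$, while $|E|=B$ by construction, so $(|Q|,|E|)=(A,B)$ and consequently $\sigma(|Q|,|E|)=\sigma(A,B)=\varGamma$, which is exactly the required conclusion. I do not expect any genuine obstacle here: all the substantive work—realizing an arbitrary compact subset of $\rbb_+^2$ as a joint Taylor spectrum, and embedding a commuting positive pair as the moduli of an operator of class $\gqb$—has already been carried out in the two corollaries, and the present statement is their direct combination.
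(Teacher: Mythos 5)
Your proposal is correct and coincides with the paper's own proof: the authors likewise obtain $(A,B)$ with $\sigma(A,B)=\varGamma$ from Corollary~\ref{arva-1} applied to $\hh_2$ and then invoke Corollary~\ref{bielan-1} to realize $(A,B)$ as $(|Q|,|E|)$. Your extra remark that $|Q|=A$ because $A$ is positive is the one small identification the paper leaves implicit, and it is handled correctly.
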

   \begin{proof}
It follows from Corollary~\ref{arva-1} that there exists a
pair $(A,B)$ of commuting positive operators
$A,B\in\ogr{\hh_2}$ such that $\sigma(A,B) = \varGamma.$
Applying Corollary~\ref{bielan-1} completes the proof.
   \end{proof}
   As shown below the norm of an operator of class $\gqb$
can be expressed in terms of the geometric spectral radius
of the pair $(|Q|,|E|)$.
   \begin{pro}\label{ogr-1}
Suppose that $T =  \big[\begin{smallmatrix} V & E \\
0 & Q \end{smallmatrix}\big] \in \gqbh.$ Then
   \begin{align} \label{norm-u}
\|T\| =\max\{1, r(|Q|,|E|)\}.
   \end{align}
   \end{pro}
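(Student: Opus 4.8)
The plan is to compute $\|T\|$ via $\|T\|^2 = \|T^*T\|$ and to identify $T^*T$ explicitly using the block structure \eqref{brep} together with the constraints \eqref{gqb-1}--\eqref{gqb-3}. First I would write
\begin{align*}
T^*T = \begin{bmatrix} V^* & 0 \\ E^* & Q^* \end{bmatrix}
\begin{bmatrix} V & E \\ 0 & Q \end{bmatrix}
= \begin{bmatrix} V^*V & V^*E \\ E^*V & E^*E + Q^*Q \end{bmatrix}.
\end{align*}
By \eqref{gqb-1} we have $V^*V = I_{\hh_1}$, and by \eqref{gqb-2} we have $V^*E = 0$, hence also $E^*V = 0$. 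Therefore $T^*T = I_{\hh_1} \oplus (E^*E + Q^*Q)$ relative to $\hh = \hh_1 \oplus \hh_2$, and since $E^*E = |E|^2$ and $Q^*Q = |Q|^2$, this reads $T^*T = I_{\hh_1} \oplus (|Q|^2 + |E|^2)$.

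From this decomposition the norm splits as a maximum: since $T^*T$ is positive and block diagonal,
\begin{align*}
\|T\|^2 = \|T^*T\| = \max\big\{\|I_{\hh_1}\|,\ \||Q|^2 + |E|^2\|\big\} = \max\big\{1,\ \||Q|^2 + |E|^2\|\big\}.
\end{align*}
Now I would invoke Lemma~\ref{inf-del}: the operators $|Q|$ and $|E|$ are commuting and selfadjoint (indeed positive) by Proposition~\ref{mem-ber-1}(i), so \eqref{impo-ss-0} applies and gives $r(|Q|,|E|) = \||Q|^2 + |E|^2\|^{1/2}$, that is, $\||Q|^2 + |E|^2\| = r(|Q|,|E|)^2$. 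Substituting yields $\|T\|^2 = \max\{1, r(|Q|,|E|)^2\}$, and taking square roots produces \eqref{norm-u}, since $\max\{1, r^2\}^{1/2} = \max\{1, r\}$ for $r \ge 0$.

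I expect no serious obstacle here; the argument is essentially a bookkeeping computation followed by one citation. The one point that requires a moment of care is the justification that the norm of a positive block-diagonal operator equals the maximum of the norms of its diagonal blocks (equivalently, that $\sigma(T^*T) = \{1\} \cup \sigma(|Q|^2 + |E|^2)$ so that the spectral radius, which equals the norm for a selfadjoint operator, is the stated maximum); this is standard and follows directly from the orthogonal-sum structure. The only substantive input is Lemma~\ref{inf-del}, whose hypothesis — commutativity and selfadjointness of $|Q|$ and $|E|$ — is guaranteed by Proposition~\ref{mem-ber-1}(i), so everything needed is already in place.
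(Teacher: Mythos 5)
Your proposal is correct and takes essentially the same route as the paper: both proofs rest on the block identity $T^*T = I_{\hh_1} \oplus (|Q|^2+|E|^2)$ (the paper's \eqref{kaka-1}) followed by identifying $\||Q|^2+|E|^2\|^{1/2}$ with $r(|Q|,|E|)$. The only difference is presentational — you cite Lemma~\ref{inf-del} for that last identity, whereas the paper re-derives it inline via the joint spectral measure, the essential supremum, and Theorem~\ref{spmt}(i) — so your version is a slightly more economical packaging of the same argument.
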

   \begin{proof}
Let $G$ be the joint spectral measure of the pair
$(|Q|,|E|)$ and let ${G\mbox{-}\mathrm{ess\,sup\,}}
\varphi$ stand for the essential supremum of a Borel
function $\varphi\colon \rbb_+^2 \to \rbb_+$ with respect
to the measure $G.$ It follows from Definition~\ref{defq}
that
   \begin{align} \label{kaka-1}
T^*T=\begin{bmatrix}  I & 0 \\
0 & Q^*Q + E^*E \end{bmatrix}.
   \end{align}
Combined with the hypothesis that the spaces $\hh_1$
and $\hh_2$ are nonzero, this implies~that
   \allowdisplaybreaks
   \begin{align*}
\|T\|=\||T|\| &= \left\|\begin{bmatrix} I & 0 \\
0 & (|Q|^2 + |E|^2)^{1/2}\end{bmatrix}\right\|
   \\
& = \max\left\{1, \|(|Q|^2 + |E|^2)^{1/2}\|\right\}
   \\
& = \max\Big\{1, \underset{(s,t)\in\rbb_+^2}
{G\mbox{-}\mathrm{ess\,sup\,}} (s^2 + t^2)^{1/2}
\Big\}
   \\
& \hspace{-.4ex}\overset{(*)}= \max\Big\{1,
\max_{(s,t)\in\sigma(|Q|,|E|)} (s^2 + t^2)^{1/2}\Big\},
   \\
& = \max\{1, r(|Q|,|E|)\},
   \end{align*}
where $(*)$ follows from Theorem~\ref{spmt}(i) and the
continuity of the function $(s,t) \mapsto (s^2 +
t^2)^{1/2}$ on $\rbb_+^2.$ This completes the proof.
   \end{proof}
   \begin{rem} \label{Wik-2}
It follows from Proposition~\ref{ogr-1} that if $T\in
\ogr{\hh}$ is of class $\gqb$ and $\|T\|>1$, then the
geometric spectral radius $r(|Q|,|E|)$ does not depend on
the choice of an orthogonal decomposition $\hh=\hh_1\oplus
\hh_2$ of $\hh$ relative to which $T$ has a block matrix
representation \eqref{brep} with entries $V,$ $E$ and $Q$
satisfying the conditions \eqref{gqb-1}--\eqref{gqb-4}. We
refer the reader to Example~\ref{wid-dmo-1} for a detailed
discussion of the question of the existence of different
orthogonal decompositions of the underlying Hilbert space
$\hh$ relative to which a given operator $T\in \ogr{\hh}$
is of class $\gqb.$
   \hfill $\diamondsuit$
   \end{rem}
Next we characterize contractive, isometric and expansive
operators of class $\gqb.$
   \begin{pro} \label{kontr-2}
Suppose $T =  \big[\begin{smallmatrix} V & E \\
0 & Q \end{smallmatrix}\big] \in \gqbh.$ Then the
following conditions are equivalent{\em :}
   \begin{enumerate}
   \item[(i)] $T$ is a contraction $($resp., an isometry, an expansion$)$,
   \item[(ii)] $(|Q|,|E|)$ is a spherical contraction $($resp., a spherical
isometry, a spherical expansion$)$,
   \item[(iii)] $\sigma(|Q|,|E|) \subseteq \dbbc_+$
$\big($resp., $\sigma(|Q|,|E|) \subseteq \tbb_+$,
$\sigma(|Q|,|E|) \subseteq \rbb_+^2 \setminus
\dbb_+$$\big).$
   \end{enumerate}
Moreover, if $T$ is a contraction, then $\|T\|=1.$
   \end{pro}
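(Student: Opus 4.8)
The plan is to reduce everything to the block-diagonal form of $T^*T$ recorded in \eqref{kaka-1} and then to translate operator inequalities on $\hh_2$ into spectral-containment statements via the spectral mapping theorem. First I would note that the three conditions in (i) read $T^*T\Le I$, $T^*T=I$ and $T^*T\Ge I$, respectively, and that by \eqref{kaka-1} the operator $T^*T$ is block diagonal with the identity in the top-left corner (this block-diagonality is exactly where \eqref{gqb-1} and \eqref{gqb-2} enter). Hence each of these relations holds on all of $\hh$ if and only if the corresponding relation holds for the bottom-right block on $\hh_2$, i.e.\ if and only if $|Q|^2+|E|^2\Le I$ (resp.\ $=I$, $\Ge I$). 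Since $|Q|$ and $|E|$ are positive and commute by Proposition~\ref{mem-ber-1}(i), these last three relations are precisely the assertions that $(|Q|,|E|)$ is a spherical contraction (resp.\ isometry, expansion). This yields (i)$\Leftrightarrow$(ii) with no work beyond \eqref{kaka-1}.

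For (ii)$\Leftrightarrow$(iii) I would apply Theorem~\ref{spmt}(ii) with the continuous function $\psi(s,t)=s^2+t^2$ to get
\[
\sigma(|Q|^2+|E|^2)=\psi(\sigma(|Q|,|E|))=\big\{s^2+t^2\colon (s,t)\in\sigma(|Q|,|E|)\big\}.
\]
Combining this with the elementary fact \eqref{font-1} converts the operator inequality $|Q|^2+|E|^2\Le I$ into the spectral condition $\sigma(|Q|^2+|E|^2)\subseteq[0,1]$, that is, $s^2+t^2\Le 1$ for every $(s,t)\in\sigma(|Q|,|E|)$. Because $\sigma(|Q|,|E|)\subseteq\rbb_+^2$ by Theorem~\ref{spmt}(i), the lower bound $s^2+t^2\Ge 0$ is automatic, so this is exactly $\sigma(|Q|,|E|)\subseteq\dbbc_+$. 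The expansive case is identical with $[1,\infty)$ in place of $[0,1]$ and $\rbb_+^2\setminus\dbb_+$ in place of $\dbbc_+$, while the isometric case follows either by intersecting the two or directly from $\sigma(|Q|^2+|E|^2)=\{1\}$ being equivalent (again via \eqref{font-1}) to $\sigma(|Q|,|E|)\subseteq\tbb_+$.

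For the ``moreover'' clause I would simply invoke Proposition~\ref{ogr-1}, which gives $\|T\|=\max\{1,r(|Q|,|E|)\}\Ge 1$; since a contraction satisfies $\|T\|\Le 1$, the equality $\|T\|=1$ follows at once. I do not expect a genuine obstacle here: the argument is an assembly of \eqref{kaka-1}, the spectral mapping theorem, and \eqref{font-1}. The only point requiring care is the bookkeeping across the three parallel cases, together with the repeated use of the positivity $\sigma(|Q|,|E|)\subseteq\rbb_+^2$, which is precisely what lets me discard the redundant lower bound and identify the quarter-disk, the quarter-circle, and its exterior cleanly.
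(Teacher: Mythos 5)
Your proposal is correct and follows essentially the same route as the paper's own proof: the equivalence (i)$\Leftrightarrow$(ii) from the block-diagonal form \eqref{kaka-1}, the equivalence (ii)$\Leftrightarrow$(iii) from Theorem~\ref{spmt}(ii) applied to $\psi(x_1,x_2)=x_1^2+x_2^2$ together with \eqref{font-1} and $\sigma(|Q|,|E|)\subseteq\rbb_+^2$, and the ``moreover'' part from Proposition~\ref{ogr-1}. Your write-up merely spells out the bookkeeping that the paper leaves implicit.
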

   \begin{proof}
By Proposition~\ref{mem-ber-1}(i), $(|Q|,|E|)$ is a pair of
commuting positive operators. The equivalence
(i)$\Leftrightarrow$(ii) follows from \eqref{kaka-1}. Next
by applying Theorem~\ref{spmt}(ii) to the polynomial
$\psi(x_1,x_2) = x_1^2 + x_2^2,$ we get
   \begin{align*}
\psi(\sigma(|Q|,|E|)) = \sigma(|Q|^2 + |E|^2),
   \end{align*}
which together with \eqref{font-1} and
$\sigma(|Q|,|E|) \subseteq \rbb_+^2$ yields the
equivalence (ii)$\Leftrightarrow$(iii).

The ``moreover'' part is a direct consequence of
Proposition~\ref{ogr-1}.
   \end{proof}
For self-containedness, we state the following result
whose straightforward proof is left to the reader.
   \begin{pro}
The class $\gqb$ is closed under the operation of
taking orthogonal sums, i.e., if
$\{T_{\iota}\}_{\iota\in J}$ is a uniformly bounded
family of operators of class $\gqb,$ then
$\bigoplus_{\iota\in J} T_{\iota}$ is an operator of
class $\gqb.$
   \end{pro}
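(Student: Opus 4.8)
The plan is to realize the orthogonal sum as a single $2\times 2$ upper triangular block operator and then verify the defining conditions \eqref{gqb-1}--\eqref{gqb-4} entry by entry. First I would write each member of the family in its block form $T_{\iota} = \big[\begin{smallmatrix} V_{\iota} & E_{\iota} \\ 0 & Q_{\iota}\end{smallmatrix}\big]$ relative to a nontrivial orthogonal decomposition $\hh^{(\iota)} = \hh_1^{(\iota)} \oplus \hh_2^{(\iota)}$, where $V_{\iota}$, $E_{\iota}$ and $Q_{\iota}$ satisfy \eqref{gqb-1}--\eqref{gqb-4}. Setting $\hh_1 = \bigoplus_{\iota} \hh_1^{(\iota)}$ and $\hh_2 = \bigoplus_{\iota} \hh_2^{(\iota)}$, the obvious unitary reshuffling of the summands identifies the underlying space of $\bigoplus_{\iota} T_{\iota}$ with $\hh_1 \oplus \hh_2$, and under this identification $\bigoplus_{\iota} T_{\iota}$ acquires the block form $\big[\begin{smallmatrix} V & E \\ 0 & Q\end{smallmatrix}\big]$ with $V = \bigoplus_{\iota} V_{\iota}$, $E = \bigoplus_{\iota} E_{\iota}$ and $Q = \bigoplus_{\iota} Q_{\iota}$. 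The uniform boundedness hypothesis is precisely what guarantees that $V$, $E$ and $Q$ are bounded operators on the respective spaces.

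Next I would check the algebraic constraints. Since adjoints and products distribute over orthogonal sums of uniformly bounded families, one has $V^*V = \bigoplus_{\iota} V_{\iota}^*V_{\iota} = \bigoplus_{\iota} I = I$, which yields \eqref{gqb-1}; likewise $V^*E = \bigoplus_{\iota} V_{\iota}^* E_{\iota} = 0$ gives \eqref{gqb-2}, and $QE^*E - E^*EQ = \bigoplus_{\iota} (Q_{\iota} E_{\iota}^* E_{\iota} - E_{\iota}^* E_{\iota} Q_{\iota}) = 0$ gives \eqref{gqb-3}. For the quasinormality condition \eqref{gqb-4} I would verify directly that $QQ^*Q = \bigoplus_{\iota} Q_{\iota} Q_{\iota}^* Q_{\iota} = \bigoplus_{\iota} Q_{\iota}^* Q_{\iota} Q_{\iota} = Q^*QQ$, using that each $Q_{\iota}$ is quasinormal; equivalently, one may observe that $|Q| = \bigoplus_{\iota} |Q_{\iota}|$ by uniqueness of the positive square root, and that $Q|Q| = |Q|Q$ then holds summand by summand. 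Finally, nontriviality is inherited: each $\hh_1^{(\iota)} \oplus \hh_2^{(\iota)}$ is nontrivial, so provided the index family is nonempty both $\hh_1$ and $\hh_2$ are nonzero, and hence $\bigoplus_{\iota} T_{\iota} = \big[\begin{smallmatrix} V & E \\ 0 & Q\end{smallmatrix}\big] \in \gqbh$.

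There is no genuine obstacle here, which is why the argument is safely left to the reader; the only points deserving a word of care are the bookkeeping in the unitary reshuffling that converts a direct sum of block operators into a single block operator, and the distributivity of the modulus over orthogonal sums. Both are routine once the uniform bound secures the boundedness of the assembled entries $V$, $E$ and $Q$.
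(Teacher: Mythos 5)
Your proof is correct and is precisely the routine verification the paper has in mind when it leaves the argument to the reader: regroup the $\hh_1^{(\iota)}$'s and $\hh_2^{(\iota)}$'s into $\hh_1\oplus\hh_2$, observe that the entries of $\bigoplus_{\iota}T_{\iota}$ are then $\bigoplus_{\iota}V_{\iota}$, $\bigoplus_{\iota}E_{\iota}$, $\bigoplus_{\iota}Q_{\iota}$, and check \eqref{gqb-1}--\eqref{gqb-4} summand by summand, with uniform boundedness securing boundedness and nontriviality being inherited. There is nothing to add or compare, since the paper offers no alternative route.
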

The following lemma provides a sufficient condition for the
product of two quasinormal operators to be quasinormal.
   \begin{lem} \label{JSZ1}
Suppose that $Q_1, Q_2 \in \ogr{\hh}$ are commuting
quasinormal operators such that $Q_1$ commutes with
$Q_2^*Q_2$ and $Q_2$ commutes with $Q_1^*Q_1$. Then
$Q_1Q_2$ is quasinormal. Moreover, any positive
integer power of a quasinormal operator is
quasinormal.
   \end{lem}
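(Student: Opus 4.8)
The plan is to use the standard characterization of quasinormality recalled in the introduction: an operator $T \in \ogr{\hh}$ is quasinormal precisely when it commutes with $T^*T$ (equivalently $TT^*T = T^*TT$). Writing $A_j = Q_j^* Q_j$ for $j=1,2$, the three hypotheses read $Q_1 Q_2 = Q_2 Q_1$, $Q_1 A_2 = A_2 Q_1$ and $Q_2 A_1 = A_1 Q_2$, while quasinormality of each $Q_j$ amounts to $Q_j A_j = A_j Q_j$. The goal is thus to prove that $Q_1 Q_2$ commutes with $(Q_1 Q_2)^*(Q_1 Q_2) = Q_2^* A_1 Q_2$.

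The first step is to simplify this modulus square. Since $A_1$ is selfadjoint and commutes with $Q_2$, taking adjoints shows it also commutes with $Q_2^*$; hence $Q_2^* A_1 Q_2 = A_1 Q_2^* Q_2 = A_1 A_2$. It therefore suffices to show that $Q_1 Q_2$ commutes with the product $A_1 A_2$. This I would do by a direct commutation chain: starting from $Q_1 Q_2 A_1 A_2$, move $A_1$ past $Q_2$ (using $Q_2 A_1 = A_1 Q_2$) and then past $Q_1$ (using $Q_1 A_1 = A_1 Q_1$), and next move $A_2$ past $Q_2$ (quasinormality of $Q_2$) and past $Q_1$ (using $Q_1 A_2 = A_2 Q_1$), arriving at $A_1 A_2 Q_1 Q_2$. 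This yields $Q_1 Q_2 A_1 A_2 = A_1 A_2 Q_1 Q_2$, so $Q_1 Q_2$ commutes with $(Q_1 Q_2)^*(Q_1 Q_2)$ and is quasinormal.

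For the ``moreover'' part, I would first record the auxiliary identity $Q^{*n} Q^{n} = (Q^*Q)^{n}$ for a quasinormal $Q$, proved by induction: writing $B = Q^*Q$ and using that $Q$, and hence $Q^*$, commutes with $B$, one gets $Q^{*(n+1)} Q^{n+1} = Q^* B^{n} Q = B^{n} Q^* Q = B^{n+1}$. Since $Q$ commutes with $B$, so does $Q^{n}$, and therefore $Q^{n}$ commutes with $B^{n} = (Q^{n})^* Q^{n}$, which is exactly the quasinormality of $Q^{n}$.

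The computations are entirely elementary, so I expect no genuine obstacle; the single point requiring care is the repeated use of the fact that commutation with the selfadjoint operators $A_j$ (resp.\ $B$) transfers automatically to their adjoints. This is precisely what allows the modulus square $(Q_1 Q_2)^*(Q_1 Q_2)$ to collapse to the product $A_1 A_2$ of the individual modulus squares, after which the result follows from a short rearrangement.
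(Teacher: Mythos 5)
Your proof is correct. For the first part there is nothing to compare: the paper explicitly leaves the ``simple algebraic proof'' to the reader, and your computation --- collapsing $(Q_1Q_2)^*(Q_1Q_2)$ to $A_1A_2$ via the adjoint of $Q_2A_1=A_1Q_2$, then the four-step commutation chain --- is exactly the kind of verification intended. Where you genuinely diverge is the ``moreover'' part. The paper deduces it \emph{from the first part}: by induction one applies the product statement to $Q_1=Q$ and $Q_2=Q^{n-1}$, using the identity $Q^{*n}Q^n=(Q^*Q)^n$ to check that the commutation hypotheses of the first part are met. You bypass the first part entirely: having proved the same identity, you note that $Q^n$ commutes with $Q^*Q$, hence with $(Q^*Q)^n=(Q^n)^*Q^n$, which is quasinormality of $Q^n$ outright. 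Your route is the more economical of the two (the power statement turns out not to need the product statement at all), while the paper's route has the mild virtue of reusing the lemma it just stated. A byproduct of your argument is worth recording: your proof of the first part never invokes the hypothesis $Q_1Q_2=Q_2Q_1$; the relations $Q_jA_j=A_jQ_j$ and $Q_jA_k=A_kQ_j$ for $j\neq k$ (with $A_j=Q_j^*Q_j$) already force $Q_1Q_2$ to be quasinormal, so that hypothesis is redundant for this particular conclusion, though it is of course satisfied in all of the paper's applications.
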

   \begin{proof}
We leave the simple algebraic proof of the first part
to the reader. The ``moreover'' part follows from the
first part by applying the formula
   \begin{align} \label{dydy-2}
Q^{*n}Q^n=(Q^*Q)^n, \quad n\in \zbb_+,
   \end{align}
which is valid for any quasinormal operator $Q$.
   \end{proof}
Our next goal is to give a sufficient condition for
the product of two operators of class $\gqb$ to be of
class $\gqb.$
   \begin{pro}
Suppose  $T_1 =  \big[\begin{smallmatrix} V_1 & E_1 \\
0 & Q_1 \end{smallmatrix}\big] \in \gqbh$ and
$T_2 =  \big[\begin{smallmatrix} V_2 & E_2 \\
0 & Q_2 \end{smallmatrix}\big] \in \gqbh$ are such
that
   \begin{align*}
\text{$Q_kQ_l^*Q_l = Q_l^*Q_lQ_k$ and $Q_kE_l^*E_l =
E_l^*E_lQ_k$ for all distinct $k,l\in\{1,2\}$.}
   \end{align*}
Then $T_1T_2 =  \big[\begin{smallmatrix} V  & E \\
0 & Q\end{smallmatrix}\big] \in \gqbh$, where
$V=V_1V_2$, $E=V_1E_2 + E_1Q_2$ and $Q=Q_1Q_2$.
   \end{pro}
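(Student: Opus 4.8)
The plan is to verify by direct computation that the product $T_1 T_2$ has the asserted block-matrix form, and then to check that the three defining conditions \eqref{gqb-1}--\eqref{gqb-3} together with quasinormality \eqref{gqb-4} hold for the claimed entries $V = V_1 V_2$, $E = V_1 E_2 + E_1 Q_2$ and $Q = Q_1 Q_2$. First I would multiply the two upper-triangular block matrices formally:
\[
\begin{bmatrix} V_1 & E_1 \\ 0 & Q_1 \end{bmatrix}
\begin{bmatrix} V_2 & E_2 \\ 0 & Q_2 \end{bmatrix}
= \begin{bmatrix} V_1 V_2 & V_1 E_2 + E_1 Q_2 \\ 0 & Q_1 Q_2 \end{bmatrix},
\]
which immediately reads off $V$, $E$ and $Q$. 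The block at the $(2,2)$ position being $Q_1 Q_2$, quasinormality \eqref{gqb-4} of $Q$ follows at once from Lemma~\ref{JSZ1}: the hypothesis $Q_k Q_l^* Q_l = Q_l^* Q_l Q_k$ for distinct $k,l$ says precisely that $Q_1$ commutes with $Q_2^* Q_2$ and $Q_2$ commutes with $Q_1^* Q_1$, and since $T_1, T_2 \in \gqbh$ their $(2,2)$-entries $Q_1, Q_2$ are quasinormal and commuting (the commuting of $Q_1, Q_2$ should be recorded as an implicit or explicit hypothesis, as it is needed to invoke the lemma).

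Next I would verify \eqref{gqb-1}, that $V = V_1 V_2$ is an isometry: since $V_1$ and $V_2$ are isometries we have $V^* V = V_2^* V_1^* V_1 V_2 = V_2^* V_2 = I$, so this is immediate. For \eqref{gqb-2} I compute
\[
V^* E = V_2^* V_1^* (V_1 E_2 + E_1 Q_2) = V_2^* V_1^* V_1 E_2 + V_2^* V_1^* E_1 Q_2 = V_2^* E_2 + V_2^*(V_1^* E_1) Q_2 = 0,
\]
using $V_1^* V_1 = I$ together with $V_1^* E_1 = 0$ and $V_2^* E_2 = 0$, both instances of \eqref{gqb-2} for $T_1$ and $T_2$. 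These first two conditions are purely formal and present no difficulty.

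The main obstacle, and the place where the cross-commuting hypotheses are genuinely used, is \eqref{gqb-3}, namely $Q E^* E = E^* E Q$ with $Q = Q_1 Q_2$ and $E = V_1 E_2 + E_1 Q_2$. First I would expand
\[
E^* E = (V_1 E_2 + E_1 Q_2)^*(V_1 E_2 + E_1 Q_2)
= E_2^* E_2 + E_2^* V_1^* E_1 Q_2 + Q_2^* E_1^* V_1 E_2 + Q_2^* E_1^* E_1 Q_2,
\]
and here the cross terms simplify because $V_1^* E_1 = 0$ forces $E_2^* V_1^* E_1 Q_2 = 0$ and its adjoint $Q_2^* E_1^* V_1 E_2 = 0$, leaving $E^* E = E_2^* E_2 + Q_2^* E_1^* E_1 Q_2$. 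The task then reduces to showing that $Q_1 Q_2$ commutes with each of $E_2^* E_2$ and $Q_2^* E_1^* E_1 Q_2$. For the first summand, $Q_2$ commutes with $E_2^* E_2$ by \eqref{gqb-3} for $T_2$, and $Q_1$ commutes with $E_2^* E_2$ by the cross hypothesis $Q_k E_l^* E_l = E_l^* E_l Q_k$, so $Q_1 Q_2$ commutes with $E_2^* E_2$. For the second summand $Q_2^* E_1^* E_1 Q_2$, I would use that $Q_1$ commutes with $E_1^* E_1$ (by \eqref{gqb-3} for $T_1$) and with $Q_2^* Q_2$ (by the cross hypothesis), that $Q_2$ commutes with $E_1^* E_1$ (cross hypothesis) and is quasinormal; the careful step is to move $Q_1 Q_2$ through the sandwich $Q_2^* E_1^* E_1 Q_2$ using quasinormality of $Q_2$ (so that $Q_2$ and $Q_2^* Q_2$ interact correctly) together with the mutual commuting of $Q_1, Q_2, E_1^* E_1, Q_2^* Q_2$. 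Rearranging the resulting products to pass $Q_1 Q_2$ from the left to the right of $Q_2^* E_1^* E_1 Q_2$ is the one computation requiring attention, but it is a finite chain of applications of the listed commutation relations. Adding the two verified commutations establishes \eqref{gqb-3} for $E^* E$, and with \eqref{gqb-1}, \eqref{gqb-2}, \eqref{gqb-4} already in hand, we conclude $T_1 T_2 \in \gqbh$.
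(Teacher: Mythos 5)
Your proposal is correct and takes essentially the same route as the paper's proof: multiply the block matrices, invoke Lemma~\ref{JSZ1} for quasinormality of $Q_1Q_2$, and verify \eqref{gqb-1}--\eqref{gqb-3} by direct computation (the paper dismisses the latter verifications as ``routine computations''). One remark: the commutation chain for the sandwich term $Q_2^*E_1^*E_1Q_2$, and in fact the quasinormality of $Q_1Q_2$ itself, go through using only the stated hypotheses together with the quasinormality of $Q_1$ and $Q_2$ (one never needs to interchange $Q_1$ and $Q_2$ themselves, only each $Q_k$ with the selfadjoint operators $Q_l^*Q_l$, $E_l^*E_l$), so the commutativity $Q_1Q_2=Q_2Q_1$ that you flag as a needed implicit hypothesis is not actually required.
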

   \begin{proof}
First notice that
   \begin{align*}
T_1 T_2 = \begin{bmatrix} V_1 V_2  & V_1E_2 + E_1 Q_2 \\
0 & Q_1 Q_2 \end{bmatrix}.
   \end{align*}
Clearly, $V_1V_2$ is an isometry, while by
Lemma~\ref{JSZ1}, $Q_1Q_2$ is a quasinormal operator.
Routine computations show that $(V_1V_2)^* (V_1E_2 +
E_1 Q_2) = 0$ and $Q_1Q_2$ commutes with $(V_1E_2 +
E_1 Q_2)^*(V_1E_2 + E_1 Q_2)$ meaning that $T_1T_2$ is
of class $\gqb$.
   \end{proof}
It turns out that the operation of taking positive integer
powers is inner in the class $\gqb.$ The class $\gqb$ is
also closed under the operation of taking the Cauchy dual.
Furthermore, we discuss the questions of when an operator
of class $\gqb$ is $\triangle_T$-regular and when it
satisfies the kernel condition introduced recently in
\cite{A-C-J-S}.
   \begin{pro} \label{babuc}
Suppose $T =  \big[\begin{smallmatrix} V & E \\
0 & Q \end{smallmatrix}\big] \in \gqbh.$ Then
   \begin{enumerate}
   \item[(i)]
$T^n =  \Big[\begin{smallmatrix} V^n  & E_n \\[.3ex ]
0 & Q^n \end{smallmatrix}\Big] \in \gqbh$ for any $n\in
\zbb_+,$ where
   \begin{align}  \label{dydy-1}
E_n =
   \begin{cases}
0 & \text{ if } n=0,
   \\
\sum_{j=1}^{n} V^{j-1}EQ^{n-j} & \text{ if } n\Ge 1,
   \end{cases}
   \end{align}
   \item[(ii)] $T^{*n}T^n =  \big[\begin{smallmatrix}  I & 0 \\
0 & \varOmega_n \end{smallmatrix}\big] \in \gqbh$ for
any $n\in \zbb_+,$ where
   \begin{align} \label{dudu-1}
\varOmega_n =
   \begin{cases}
I & \text{ if } n=0,
   \\
E^*E\Big(\sum_{j=0}^{n-1} (Q^*Q)^j\Big) + (Q^*Q)^n &
\text{ if } n\Ge 1,
   \end{cases}
   \end{align}
   \item[(iii)] $T$ is left-invertible if and only if
$\varOmega_1$ is invertible, or equivalently there exists
$\delta \in (0,\infty)$ such that $\sigma(|Q|, |E|)
\subseteq \rbb_+^2 \setminus \delta \cdot \dbb_+${\em ;} if
this is the case,~then
   \begin{align} \label{ale-num}
\max\big\{\delta\in \rbb_+\colon \sigma(|Q|, |E|) \subseteq
\rbb_+^2 \setminus \delta \cdot
\dbb_+\big\}=\|\varOmega_1^{-1}\|^{-1/2},
   \end{align}
   \item[(iv)] if $T$ is left-invertible, then
$T'\in \gqbh$ and
   \begin{align} \label{tipr-1}
T'= \begin{bmatrix} V  & E \varOmega_1^{-1} \\
0 & Q\varOmega_1^{-1} \end{bmatrix},
   \end{align}
   \item[(v)] $T$ is $\triangle_T$-regular if and only
if $T$ is an expansion,
   \item[(vi)] $T$ satisfies the kernel condition, i.e.,
$T^*T \jd{T^*} \subseteq \jd{T^*},$ if and only if $(|Q|^2
+ |E|^2-I)E^* h_1=0$ for every $h_1 \in \jd{V^*}$ such that
$E^*h_1 \in \ob{Q^*}.$
   \end{enumerate}
   \end{pro}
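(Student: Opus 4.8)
The plan is to compute everything by block-matrix algebra, exploiting the defining relations \eqref{gqb-1}--\eqref{gqb-4} repeatedly. For part (i), I would argue by induction on $n$. The base cases $n=0,1$ are immediate from \eqref{brep} and \eqref{dydy-1}. For the inductive step, I write $T^{n+1}=T\cdot T^n$ and multiply the two upper-triangular blocks; the $(1,2)$-entry becomes $V\cdot E_n + E\cdot Q^n$, and substituting the inductive formula for $E_n$ gives the telescoped sum $\sum_{j=1}^{n+1}V^{j-1}EQ^{n+1-j}$ after re-indexing. To see that $T^n$ is again of class $\gqb$, I must verify \eqref{gqb-1}--\eqref{gqb-4} with $V^n,E_n,Q^n$ in place of $V,E,Q$: that $V^n$ is an isometry is clear; that $Q^n$ is quasinormal follows from Lemma~\ref{JSZ1}; the conditions $(V^n)^*E_n=0$ and $Q^n(E_n)^*E_n=(E_n)^*E_nQ^n$ need the orthogonality \eqref{gqb-2} together with the commutation \eqref{gqb-3} and \eqref{dydy-2} applied to $Q$.

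For part (ii), the cleanest route is to compute $T^{*n}T^n$ directly from (i): using $(V^n)^*V^n=I$ and $(V^n)^*E_n=0$, the product $T^{*n}T^n$ is block-diagonal with $(1,1)$-entry $I$ and $(2,2)$-entry $\varOmega_n=(E_n)^*E_n+(Q^*Q)^n$. It then remains to show $(E_n)^*E_n=E^*E\sum_{j=0}^{n-1}(Q^*Q)^j$. Expanding $(E_n)^*E_n=\sum_{i,j}(V^{i-1}EQ^{n-i})^*(V^{j-1}EQ^{n-j})$ and using $V^*V=I$ together with \eqref{gqb-2} to kill every off-diagonal term $i\neq j$ (since $E^*V^{j-i}\cdots$ collapses), one is left with the diagonal sum $\sum_{j=1}^{n}(Q^{n-j})^*E^*E\,Q^{n-j}$; the commutation \eqref{gqb-3} together with \eqref{dydy-2} lets me pull $E^*E$ out and recognize $\sum_{j=0}^{n-1}(Q^*Q)^j$. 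The main technical care here is the vanishing of the cross terms, which is exactly where \eqref{gqb-2} is essential.

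Parts (iii) and (iv) are then short. For (iii), left-invertibility of $T$ is equivalent to invertibility of $T^*T$, which by (ii) with $n=1$ means invertibility of $\varOmega_1=E^*E+Q^*Q=|E|^2+|Q|^2$; the spectral reformulation and \eqref{ale-num} follow from Lemma~\ref{inf-del} (specifically \eqref{impo-ss}) applied to the commuting pair $(|Q|,|E|)$, identifying $\|\varOmega_1^{-1}\|^{-1/2}$ with the stated maximum. For (iv), I substitute $T'=T(T^*T)^{-1}$ and use that $(T^*T)^{-1}$ is block-diagonal $\operatorname{diag}(I,\varOmega_1^{-1})$ by (ii), which immediately yields \eqref{tipr-1}; that $T'\in\gqbh$ requires checking the four axioms for the new entries $V,E\varOmega_1^{-1},Q\varOmega_1^{-1}$, where quasinormality of $Q\varOmega_1^{-1}$ and the commutation relations follow because $\varOmega_1^{-1}$ is a function of $|Q|^2+|E|^2$ and hence commutes with both $|Q|$ and $|E|$.

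Part (v) I would prove by expressing $\triangle_T=T^*T-I$ via \eqref{kaka-1} as the block-diagonal operator $0\oplus(\varOmega_1-I)$, so $\triangle_T\Ge 0$ exactly when $T$ is an expansion (Proposition~\ref{kontr-2}); granting expansivity, the regularity identity $\triangle_T T=\triangle_T^{1/2}T\triangle_T^{1/2}$ reduces, after block multiplication, to a relation among $E,Q,\varOmega_1$ that holds automatically because $\varOmega_1-I$ is a function of $|E|^2+|Q|^2$ and commutes with everything in sight. For part (vi), I would first compute $\jd{T^*}$ from the adjoint block form, characterizing it as the set of $h_1\oplus h_2$ with $V^*h_1=0$ and $E^*h_1+Q^*h_2=0$; the kernel condition $T^*T\jd{T^*}\subseteq\jd{T^*}$ then unwinds, using \eqref{kaka-1}, into the stated condition on vectors $h_1\in\jd{V^*}$ with $E^*h_1\in\ob{Q^*}$. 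I expect part (vi) to be the main obstacle: correctly parametrizing $\jd{T^*}$ and tracking how $T^*T$ acts on it is the delicate bookkeeping, whereas (i)--(v) are governed by the clean commutation structure that makes all the functional-calculus manipulations routine.
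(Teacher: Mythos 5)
Your proposal is correct and follows essentially the same block-matrix route as the paper's proof: induction for (i), the identity $E_n^*E_n=E^*E\sum_{j=0}^{n-1}(Q^*Q)^j$ for (ii), invertibility of $\varOmega_1$ plus Lemma~\ref{inf-del} for (iii), and the same parametrization of $\jd{T^*}$ and unwinding for (vi); the only variation is that you compute $E_n^*E_n$ by expanding the double sum directly (cross terms killed by \eqref{gqb-2}), whereas the paper runs an induction on the recursion $E_{n+1}^*E_{n+1}=E_n^*E_n+(Q^*Q)^nE^*E$ --- the same algebra organized differently. One small caution for (iv) and (v): the commutations you actually need are of $Q$ itself (not merely $|Q|$ and $|E|$) with $\varOmega_1^{-1}$ and with $(\varOmega_1-I)^{1/2}$, and these follow from quasinormality \eqref{gqb-4} together with \eqref{gqb-3} and the square root theorem, exactly as in the paper's argument.
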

   \begin{proof}
(i) Using induction, one can verify that
   \begin{align*}
T^n =  \begin{bmatrix} V^n  & E_n \\
0 & Q^n \end{bmatrix}, \quad n \in \zbb_+,
   \end{align*}
where
   \begin{align} \label{dydy-3}
\text{$E_0=0$ and $E_{n+1}= VE_{n} + E Q^n$ for
$n\in\zbb_+$.}
   \end{align}
By induction, \eqref{dydy-3} implies \eqref{dydy-1}.
Clearly for any $n\in \zbb_+$, $V^n$ is an isometry
and, by Lemma~\ref{JSZ1}, $Q^n$ is a quasinormal
operator. Since $V$ is an isometry, we infer from
\eqref{gqb-2} and \eqref{dydy-1} that $V^{*n} E_n=0$
for any $n\in \zbb_+$. Employing \eqref{dydy-3}, we
see that
   \begin{align} \notag
E_{n+1}^* E_{n+1} & \overset{\eqref{gqb-1}\&
\eqref{gqb-2}}= E_{n}^* E_{n} + Q^{*n}E^*E Q^n
   \\ \label{dydy-4}
&\hspace{-.5ex}\overset{\eqref{gqb-3}\& \eqref{dydy-2}}=
E_{n}^* E_{n} + (Q^*Q)^n E^*E, \quad n\in \zbb_+.
   \end{align}
Using induction and \eqref{gqb-3}, we deduce that $Q$
commutes with $E_n^*E_n$ for all $n\in \zbb_+$. This
implies that $T^n$ is of class $\gqb$ for any $n\in
\zbb_+$.

(ii) It follows from (i) and \eqref{dydy-2} that
   \begin{align}  \label{dydy-5}
T^{*n} T^n = \begin{bmatrix} I  & 0 \\
0 & E_n^*E_n + (Q^*Q)^n \end{bmatrix}, \quad n\in
\zbb_+.
   \end{align}
Using induction, \eqref{dydy-4} and \eqref{gqb-3}, we
conclude that
   \begin{align*}
E_n^*E_n = E^*E\sum_{j=0}^{n-1} (Q^*Q)^j, \quad
n\in \nbb.
   \end{align*}
Combined with \eqref{dydy-5}, this yields (ii).

(iii) It is clear that $T$ is left-invertible if and only
if $T^*T$ is invertible, which by (ii) with $n=1$ is
equivalent to the invertibility of $\varOmega_1.$ The
remaining statement in (iii) is a direct consequence of
Theorem~\ref{spmt}(i) and Lemma~\ref{inf-del}.

(iv) It is a routine matter to show that \eqref{tipr-1}
holds and then to verify that $T'$ is of class $\gqb.$

(v) The ``only if'' part is obvious. To prove the ``if''
part, notice that by (ii),
   \begin{align*}
\triangle_T = \begin{bmatrix} 0  & 0 \\
0 & \varOmega_1 - I \end{bmatrix}.
   \end{align*}
Since $T$ is an expansion, we see that $\varOmega_1 - I\Ge
0$ and
   \begin{align} \label{dudu-2}
\triangle_T^{1/2} = \begin{bmatrix} 0  & 0 \\
0 & (\varOmega_1 - I)^{1/2} \end{bmatrix}.
   \end{align}
Knowing that $Q$ commutes with $\varOmega_1$ and using the
square root theorem, we deduce that $Q$ commutes with
$(\varOmega_1 - I)^{1/2}$, and consequently by
\eqref{dudu-2}, $\triangle_T^{1/2} T \triangle_T^{1/2} =
\triangle_T T$, which means that $T$ is
$\triangle_T$-regular.

(vi) Since $T^*= \big[\begin{smallmatrix} V^* & 0  \\
E^* & Q^* \end{smallmatrix}\big],$ we easily verify that
   \begin{align}  \label{dziup-1}
\jd{T^*} = \{h_1 \oplus h_2 \in \hh\colon h_1 \in \jd{V^*}
\text{ and } E^*h_1 + Q^* h_2=0\}.
   \end{align}
To prove the ``if'' part, suppose that
   \begin{align} \label{Gyeon-gju}
\big(|Q|^2 + |E|^2-I\big)\big((E^*\jd{V^*}) \cap
\ob{Q^*}\big)=\{0\}.
   \end{align}
If $h_1\oplus h_2 \in \jd{T^*}$, then in view of
\eqref{dziup-1} and \eqref{Gyeon-gju}, we have
   \begin{align*}
E^* h_1 + Q^*(|Q|^2 + |E|^2) h_2 &\overset{\eqref{gqb-3}\&
\eqref{gqb-4}} = E^* h_1 + (|Q|^2 + |E|^2)Q^* h_2
   \\
& \hspace{2.2ex}= (I -|Q|^2 - |E|^2) E^* h_1 =0.
   \end{align*}
Hence by (ii) with $n=1$ and \eqref{dziup-1},
$T^*T(h_1\oplus h_2) \in \jd{T^*}$, which justifies the
``if'' part. The ``only if'' part goes by reversing the
above argument. This completes the proof.
   \end{proof}
   \begin{cor}
Suppose $T =  \big[\begin{smallmatrix} V & E \\
0 & Q \end{smallmatrix}\big] \in \gqbh$ satisfies the
kernel condition, $E \neq 0$ and $\ob{Q^*}=\hh_2.$ Then
   \begin{enumerate}
   \item[(i)] $1$ is an eigenvalue of $|Q|^2+|E|^2,$
   \item[(ii)] $\sigma(|Q|, |E|) \cap \tbb_+ \neq \emptyset.$
   \end{enumerate}
   \end{cor}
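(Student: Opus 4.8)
The plan is to convert the kernel condition into the algebraic statement that $|Q|^2+|E|^2$ restricts to the identity on $\ob{E^*}$, and then to transfer this to the Taylor spectrum via the spectral mapping theorem (Theorem~\ref{spmt}). First I would feed the hypothesis $\ob{Q^*}=\hh_2$ into the characterization of the kernel condition given by Proposition~\ref{babuc}(vi). That proposition says the kernel condition is equivalent to $(|Q|^2+|E|^2-I)E^*h_1=0$ for every $h_1\in\jd{V^*}$ satisfying $E^*h_1\in\ob{Q^*}$; since $\ob{Q^*}=\hh_2$, the side constraint $E^*h_1\in\ob{Q^*}$ is automatically met, so the condition reduces to $(|Q|^2+|E|^2-I)E^*h_1=0$ for \emph{all} $h_1\in\jd{V^*}$. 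Next I would note that taking adjoints in \eqref{gqb-2} gives $E^*V=0$, hence $E^*$ annihilates $\ob V$; combined with the orthogonal splitting $\hh_1=\ob V\oplus\jd{V^*}$ (valid because $V$ is an isometry, so $\ob V$ is closed and $\jd{V^*}=\ob V^{\perp}$), this yields $E^*\jd{V^*}=\ob{E^*}$. Therefore the reduced kernel condition says precisely that $|Q|^2+|E|^2$ agrees with the identity on all of $\ob{E^*}$.

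From here both conclusions are immediate. For (i): since $E\neq 0$ we have $E^*\neq 0$, so there is $h_1\in\jd{V^*}$ with $v:=E^*h_1\neq 0$; by the previous paragraph $(|Q|^2+|E|^2)v=v$, which exhibits $1$ as an eigenvalue of $|Q|^2+|E|^2$. For (ii): I would apply Theorem~\ref{spmt}(ii) with $\psi(s,t)=s^2+t^2$ to obtain $\sigma(|Q|^2+|E|^2)=\{s^2+t^2\colon (s,t)\in\sigma(|Q|,|E|)\}$ (exactly as in the proof of Proposition~\ref{kontr-2}). Since part (i) places $1$ in $\sigma(|Q|^2+|E|^2)$, there exists $(s,t)\in\sigma(|Q|,|E|)$ with $s^2+t^2=1$, and because $|Q|,|E|$ are positive we have $\sigma(|Q|,|E|)\subseteq\rbb_+^2$ by Theorem~\ref{spmt}(i); hence $(s,t)\in\tbb_+$ and $\sigma(|Q|,|E|)\cap\tbb_+\neq\emptyset$.

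The whole argument is short once the kernel condition is rewritten, so I do not anticipate a genuine obstacle. The two points that require a little care are the identification $E^*\jd{V^*}=\ob{E^*}$ — which is what guarantees that the eigenvector $v$ can actually be produced from a vector $h_1$ lying in $\jd{V^*}$, as demanded by the form of the kernel condition — and the use of the positivity of the pair $(|Q|,|E|)$, which is what forces the resulting unit-modulus point to sit in the nonnegative quadrant $\tbb_+$ rather than merely on the full unit circle of $\rbb^2$. Both the hypotheses $E\neq 0$ and $\ob{Q^*}=\hh_2$ are used exactly once, the former to guarantee $v\neq 0$ and the latter to discharge the side constraint in Proposition~\ref{babuc}(vi).
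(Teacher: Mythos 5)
Your proof is correct and takes essentially the same route as the paper's: both arguments rest on Proposition~\ref{babuc}(vi) with the side constraint discharged by $\ob{Q^*}=\hh_2$, on the relation $V^*E=0$ together with the isometry of $V$, and on Theorem~\ref{spmt}(ii) applied to $\psi(s,t)=s^2+t^2$ for part (ii). The only difference is organizational: you argue directly that $E^*\jd{V^*}=\ob{E^*}\neq\{0\}$ supplies an eigenvector, whereas the paper argues by contradiction that if $1$ were not an eigenvalue then $\jd{V^*}\subseteq\jd{E^*}$ would force $E=0$ --- the two are contrapositive formulations of the same argument.
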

   \begin{proof}
(i) Suppose, on the contrary, that $1$ is not an eigenvalue
of the operator $|Q|^2+|E|^2.$ Then by
Proposition~\ref{babuc}(vi), $\jd{V^*} \subseteq \jd{E^*}.$
This implies that $\ob{E} \subseteq \ob{V}.$ Since by
\eqref{gqb-2}, $\ob{E} \subseteq \ob{V}^{\perp},$ we see
that $E=0,$ a contradiction.

(ii) By (i) and Theorem~\ref{spmt}(ii) applied to the
polynomial $\psi(x_1,x_2)=x_1^2+x_2^2,$ we have $1 \in
\sigma(\psi(|Q|,|E|)) = \psi(\sigma(|Q|,|E|)),$ so there
exists $(s,t)\in \sigma(|Q|,|E|) \subseteq \rbb_+^2$ such
that $\psi(s,t)=1,$ which completes the proof.
   \end{proof}
   \section{\label{Sec3}Moment theoretic necessities}
In this section we prove a series of lemmata
concerning Hamburger and Stieltjes moment problems
needed in subsequent sections of this paper. We state
some of them in a more general context, namely for the
multi-dimensional moment problems, because the proofs
are essentially the same.

Below we use the standard multi-index notation, that
is, if $d\in \nbb,$ $\alpha=(\alpha_1,\ldots,\alpha_d)
\in \zbb_+^d$ and $x=(x_1,\ldots,x_d) \in \rbb^d,$
then we write $x^{\alpha}=x_1^{\alpha_1} \cdots
x_d^{\alpha_d}.$ A complex Borel measure $\mu$ on
$\rbb^d$ is said to be {\em compactly supported} if
there is a compact subset $K$ of $\rbb^d$ such that
$|\mu|(\rbb^d \setminus K)=0,$ where $|\mu|$ denotes
the total variation measure of $\mu.$ We write
$\supp{\mu}$ for the closed support of a finite
positive Borel measure $\mu$ on $\rbb^d$ (the support
exists because such $\mu$ is automatically regular,
see \cite[Theorem~2.18]{Rud}). We say that a
multi-sequence $\{\gamma_{\alpha}\}_{\alpha\in
\zbb_+^d}\subseteq \rbb$ is a {\em Hamburger moment
multi-sequence} (or {\em Hamburger moment sequence} if
$d=1$) if there exists a positive Borel measure $\mu$
on $\rbb^d,$ called a {\em representing measure} of
$\{\gamma_{\alpha}\}_{\alpha\in \zbb_+^d},$ such~that
   \begin{align} \label{Ham}
\gamma_{\alpha} = \int_{\rbb^d} x^{\alpha} d\mu(x),
\quad \alpha\in \zbb_+^d.
   \end{align}
If such $\mu$ is unique, then
$\{\gamma_{\alpha}\}_{\alpha\in \zbb_+^d}$ is said to
be {\em determinate}. If \eqref{Ham} holds for some
positive Borel measure $\mu$ on $\rbb^d$ supported in
$\rbb_+^d,$ then $\{\gamma_{\alpha}\}_{\alpha\in
\zbb_+^d}$ is called a {\em Stieltjes moment
multi-sequence} (or {\em Stieltjes moment sequence} if
$d=1$).
   \begin{lem} \label{mompr2}
Let $d\in \nbb.$ Suppose that $\mu_1$ and $\mu_2$ are
compactly supported complex Borel measures on $\rbb^d$ such
that
   \begin{align*}
\int_{\rbb^d} x^{\alpha} d\mu_1(x) = \int_{\rbb^d}
x^{\alpha} d\mu_2(x), \quad \alpha\in \zbb_+^d.
   \end{align*}
Then $\mu_1=\mu_2.$
   \end{lem}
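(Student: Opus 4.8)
The plan is to prove the result by showing that the two measures are determined by their moments, using the compactness of supports to pass from polynomials to arbitrary continuous functions and then invoking a uniqueness (Riesz-representation) argument. The key point is that a compactly supported measure on $\rbb^d$ is uniquely determined by its values against all continuous functions, and that polynomials are dense (in the appropriate topology) among continuous functions on any compact set.

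First I would form the difference $\nu := \mu_1 - \mu_2$, a compactly supported complex Borel measure on $\rbb^d$ with the property that $\int_{\rbb^d} x^{\alpha}\, d\nu(x) = 0$ for every $\alpha \in \zbb_+^d$, i.e.\ $\int_{\rbb^d} p\, d\nu = 0$ for every $p \in \cbb[x_1,\ldots,x_d]$. Choose a compact set $K \subseteq \rbb^d$ with $|\nu|(\rbb^d \setminus K) = 0$, so that $\nu$ is supported in $K$ and all integrals may be taken over $K$. The goal is to upgrade the vanishing of $\int_K p\, d\nu$ from polynomials $p$ to all $f \in C(K)$, whence $\nu = 0$ by the uniqueness part of the Riesz representation theorem (a regular complex Borel measure on a compact Hausdorff space that annihilates $C(K)$ is zero).

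The main step, and the place where the hypotheses are really used, is the approximation argument. Since $K$ is compact, the Stone--Weierstrass theorem guarantees that the polynomials (restricted to $K$) are uniformly dense in the space $C_{\rbb}(K)$ of real-valued continuous functions, hence in $C(K)$ after allowing complex coefficients. So, given $f \in C(K)$ and $\varepsilon > 0$, I would pick a polynomial $p$ with $\sup_{x\in K}|f(x) - p(x)| \le \varepsilon$, and estimate
   \begin{align*}
\Big| \int_K f\, d\nu \Big| = \Big| \int_K (f - p)\, d\nu \Big| \le \int_K |f - p|\, d|\nu| \le \varepsilon\, |\nu|(K),
   \end{align*}
using $\int_K p\, d\nu = 0$. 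Letting $\varepsilon \to 0$ gives $\int_K f\, d\nu = 0$ for every $f \in C(K)$.

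The hard part is essentially bookkeeping rather than conceptual: one must be careful that $\nu$ is a genuine regular complex Borel measure so that the Riesz uniqueness applies, and that the total variation $|\nu|(K)$ is finite (which holds because $\nu$ is a finite complex measure, being a difference of two compactly supported complex measures). Once $\int_K f\, d\nu = 0$ for all $f \in C(K)$, the uniqueness clause of the Riesz representation theorem forces $\nu = 0$ on $K$, and since $\nu$ is supported in $K$ we conclude $\nu = 0$ on all of $\rbb^d$, i.e.\ $\mu_1 = \mu_2$. I would expect the only subtlety to be ensuring that one works over a single compact set containing the supports of both $\mu_1$ and $\mu_2$ (take the union of two such compacta), so that Stone--Weierstrass is applied on one fixed $K$.
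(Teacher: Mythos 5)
Your proposal is correct and follows essentially the same route as the paper: form the difference measure, use Stone--Weierstrass on a compact set containing its support to pass from polynomials to continuous functions, and conclude by Riesz-representation uniqueness. The only (immaterial) difference is that you test against $C(K)$ and invoke the Riesz theorem on the compact set $K$, whereas the paper tests against continuous functions on $\rbb^d$ vanishing at infinity and cites the locally compact version together with regularity of finite Borel measures on $\rbb^d$.
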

   \begin{proof}
Since $|\mu_1-\mu_2|(\varDelta) \Le |\mu_1|(\varDelta)
+ |\mu_2|(\varDelta)$ for all Borel subsets
$\varDelta$ of $\rbb^d,$ the complex Borel measure
$\mu:=\mu_1-\mu_2$ is compactly supported, that is
$\supp{|\mu|} \subseteq [-R,R]^d$ for some
$R\in\rbb_+,$ and
   \begin{align} \label{mompr3}
\int_{\rbb^d} p \, d\mu = 0, \quad p\in \cbb[x_1,
\ldots,x_d].
   \end{align}
Let $f$ be a continuous complex function on $\rbb^d$
vanishing at infinity. By the Stone-Weierstrass theorem,
there exists a sequence $\{p_n\}_{n=1}^{\infty} \subseteq
\cbb[x_1, \ldots,x_d]$ such~that
   \begin{align} \label{mompr4}
\lim_{n\to \infty} \sup_{x\in [-R,R]^d}
|f(x)-p_n(x)|=0.
   \end{align}
Since
   \begin{align*}
\Big|\int_{\rbb^d} f d\mu\Big| &
\overset{\eqref{mompr3}}= \Big|\int_{\rbb^d} (f
-p_n)d\mu\Big| \Le \int_{[-R,R]^d} |f -p_n|d|\mu|
   \\
& \hspace{.7ex} \Le |\mu|([-R,R]^d) \sup_{x\in
[-R,R]^d} |f(x)-p_n(x)|, \quad n\in \nbb,
   \end{align*}
we deduce from \eqref{mompr4} that $\int_{\rbb^d} f
d\mu=0.$ Applying \cite[Theorems~ 6.19 and 2.18]{Rud}
yields $\mu=0,$ or equivalently, $\mu_1=\mu_2.$
   \end{proof}
   \begin{rem}
Concerning Lemma \ref{mompr2}, it is worth mentioning that
any sequence $\{\gamma_n\}_{n=0}^{\infty}\subseteq \cbb$
has infinitely many representing complex measures. For
this, note that there exists a complex Borel measure $\rho$
on $\rbb$ such that (see \cite{Boa39,Pol38,Dur89})
   \begin{align*}
\gamma_n = \int_{\rbb} x^n d\rho(x), \quad n\in \zbb_+.
   \end{align*}
Let $\{s_n\}_{n=0}^{\infty}$ be an indeterminate Hamburger
moment sequence with two distinct representing measures
$\mu_1$ and $\mu_2$ (see \cite{Sti95,ber98}). Then $\mu:=
\mu_1-\mu_2$ is a signed Borel measure on $\rbb$ such that
   \begin{align*}
\int_{\rbb} x^n d\mu(x)=0, \quad n\in \zbb_+.
   \end{align*}
As a consequence, we have
   \begin{align*}
\gamma_n = \int_{\rbb} x^n d(\rho + \vartheta \mu)(x),
\quad n\in \zbb_+, \, \vartheta \in \cbb.
   \end{align*}
Moreover, the mapping $\cbb \ni \vartheta \longmapsto \rho
+ \vartheta \mu$ is an injection.
   \hfill $\diamondsuit$
   \end{rem}
   \begin{lem} \label{mompr5}
If $d\in \nbb,$ $R=(R_1, \ldots,R_d)\in \rbb_+^d$ and
$\mu$ is a complex Borel measure on $\rbb^d$ such that
$\supp{|\mu|} \subseteq [-R_1,R_1] \times \ldots
\times [-R_d,R_d],$ then
   \begin{align*}
\Big|\int_{\rbb^d} x^{\alpha} d\mu(x)\Big| \Le
|\mu|(\rbb^d) R^{\alpha}, \quad \alpha \in \zbb_+^d.
   \end{align*}
   \end{lem}
   \begin{proof}
Since $|x^{\alpha}| \Le R^{\alpha}$ for all $\alpha\in
\zbb_+^d$ and $x\in \supp{|\mu|}$, we get
   \begin{align*}
\Big|\int_{\rbb^d} x^{\alpha} d\mu(x)\Big| \Le
\int_{\rbb^d} |x^{\alpha}| \, d|\mu|(x) \Le
|\mu|(\rbb^d) R^{\alpha}, \quad \alpha \in \zbb_+^d.
&\hfill \qedhere
   \end{align*}
   \end{proof}
   \begin{lem} \label{mompr6}
Let $d\in\nbb,$ $\mu$ be a compactly supported complex
Borel measure on $\rbb^d$ and $\gamma_{\alpha}=
\int_{\rbb^d} x^{\alpha} d\mu(x)$ for $\alpha\in
\zbb_+^d.$ Then the following conditions are~
equivalent{\em :}
   \begin{enumerate}
   \item[(i)] $\{\gamma_{\alpha}\}_{\alpha\in \rbb^d}$
is a Hamburger moment multi-sequence,
   \item[(ii)] $\mu$ is a positive measure.
   \end{enumerate}
Moreover, if {\em (i)} holds, then
$\{\gamma_{\alpha}\}_{\alpha\in \rbb^d}$ is
determinate.
   \end{lem}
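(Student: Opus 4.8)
The implication (ii)$\Rightarrow$(i) is immediate, since a positive $\mu$ serves as its own representing measure and hence $\{\gamma_\alpha\}_{\alpha\in\zbb_+^d}$ is a Hamburger moment multi-sequence. The substance lies in the converse, and my plan is to manufacture a positive representing measure and then identify it with $\mu$ through the uniqueness Lemma~\ref{mompr2}. The determinacy statement should fall out of the same identification.

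First I would use the compact support of $\mu$ to fix $R=(R_1,\dots,R_d)\in\rbb_+^d$ with $\supp{|\mu|}\subseteq[-R_1,R_1]\times\dots\times[-R_d,R_d]$. Assuming (i), let $\nu$ be a positive Borel measure on $\rbb^d$ with $\gamma_\alpha=\int_{\rbb^d} x^\alpha\, d\nu$ for all $\alpha\in\zbb_+^d$. The main obstacle is that $\nu$ is only known to be positive and need not be compactly supported a priori, so Lemma~\ref{mompr2} cannot be invoked directly; the heart of the argument is therefore to prove that $\nu$ is in fact compactly supported, after which the uniqueness lemma closes everything.

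To overcome this, I would test the moment identity against the even monomials $x_j^{2k}$. Fixing a coordinate $j$ and $\epsilon>0$, positivity of $\nu$ gives $(R_j+\epsilon)^{2k}\,\nu(\{x:|x_j|>R_j+\epsilon\})\le\int_{\rbb^d} x_j^{2k}\, d\nu=\gamma_{2k e_j}$, where $e_j$ is the $j$-th standard multi-index, while Lemma~\ref{mompr5} applied to $\mu$ bounds $\gamma_{2k e_j}=\int_{\rbb^d} x_j^{2k}\, d\mu$ by $|\mu|(\rbb^d)\,R_j^{2k}$. Dividing and letting $k\to\infty$ sends $(R_j/(R_j+\epsilon))^{2k}\to 0$ (this covers the degenerate case $R_j=0$ as well), forcing $\nu(\{x:|x_j|>R_j+\epsilon\})=0$; since $\epsilon$ and $j$ are arbitrary, $\nu$ is supported in $\prod_{j=1}^d[-R_j,R_j]$ and hence compactly supported.

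With $\nu$ now compactly supported, $\mu$ and $\nu$ are two compactly supported complex Borel measures sharing all moments, so Lemma~\ref{mompr2} yields $\mu=\nu$; in particular $\mu$ is positive, which settles (i)$\Rightarrow$(ii). The determinacy claim would follow from precisely the same computation: the compact-support argument applies verbatim to an \emph{arbitrary} positive representing measure, so every such measure is compactly supported and therefore equals $\mu$, leaving exactly one. I expect the only delicate point to be the passage from the tail estimate to compact support, but the geometric decay $(R_j/(R_j+\epsilon))^{2k}\to0$ makes it routine.
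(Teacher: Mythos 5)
Your proposal is correct and follows essentially the same route as the paper: bound the moments of the positive representing measure $\nu$ via Lemma~\ref{mompr5}, deduce that $\supp\nu$ lies in the same box $[-R_1,R_1]\times\dots\times[-R_d,R_d]$, and then invoke Lemma~\ref{mompr2} to get $\mu=\nu$, which yields both positivity and determinacy at once. The only difference is cosmetic: where the paper cites a textbook fact (Rudin's exercise on $\lim_{n}\|f\|_{2n}=\|f\|_\infty$) to pass from the moment growth $\big(\int x_j^{2n}\,d\nu\big)^{1/2n}\Le R_j$ to the support bound, you prove the same bound directly by your Chebyshev-type estimate $(R_j+\epsilon)^{2k}\,\nu(\{|x_j|>R_j+\epsilon\})\Le|\mu|(\rbb^d)R_j^{2k}$.
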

   \begin{proof}
(i)$\Rightarrow$(ii) Let $\nu$ be a representing measure of
$\{\gamma_{\alpha}\}_{\alpha\in\zbb_+^d}.$ By
Lemma~\ref{mompr5},
   \begin{align*}
\lim_{n\to \infty} \Big(\int_{\rbb^d} x_j^{2n} d
\nu(x)\Big)^{1/2n} = \lim_{n\to \infty} \Big|\int_{\rbb^d}
x_j^{2n} d \mu(x)\Big|^{1/2n} \Le R_j, \quad j=1,\ldots,d,
   \end{align*}
where $R_1, \ldots,R_d$ are as in Lemma~ \ref{mompr5}.
Thus, by \cite[Exercise~ 4(e), p.~ 71]{Rud} (see also
\cite[Problem~1(a), p.\ 332]{Sim-4}), $\supp{\nu} \subseteq
[-R_1,R_1] \times \ldots \times [-R_d,R_d]$. Hence by
Lemma~ \ref{mompr2}, $\{\gamma_{\alpha}\}_{\alpha\in
\rbb^d}$ is determinate, $\mu=\nu$ and so $\mu$ is a
positive measure.

   The implication (ii)$\Rightarrow$(i) is trivial.
   \end{proof}
We state now the following fact which we need in the
proof of Lemma~\ref{mompr7}. It can be proved by
induction on the degree of the polynomial in question.
   \begin{lem}[\mbox{\cite[Exercise 7.2]{Dick}}] \label{Dick}
If $p\in \cbb[x]$ is of degree $k\in \zbb_+$, then
   \begin{align*}
(\triangle^m \check{p})_n = p^{(m)}(0), \quad n\in
\zbb_+,\, m \Ge k,
   \end{align*}
where $\triangle\colon \cbb^{\zbb_+} \to
\cbb^{\zbb_+}$ is the linear transformation given by
$(\triangle \gamma)_n = \gamma_{n+1}-\gamma_n$ for
$n\in \zbb_+$ and $\gamma \in \cbb^{\zbb_+},$
$\check{p}\in \cbb^{\zbb_+}$ is given by
$\check{p}_n=p(n)$ for $n\in \zbb_+$ and $p^{(m)}(0)$
stands for the $m$th derivative of $p$ at $0.$
   \end{lem}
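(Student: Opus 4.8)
The plan is to argue by induction on the degree $k$ of $p$. The base case $k=0$ is immediate: here $p\equiv a_0$ is constant, so $\check p$ is the constant sequence, $(\triangle^0\check p)_n=a_0=p(0)=p^{(0)}(0)$, and for every $m\Ge 1$ one has $\triangle\check p=0$, hence $(\triangle^m\check p)_n=0=p^{(m)}(0)$ (the $m$th derivative of a constant vanishing for $m\Ge 1$).

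For the inductive step the key observation is that the forward difference operator $\triangle$ acting on sequences is induced, at the level of polynomials, by the map $p\mapsto q$ with $q(x):=p(x+1)-p(x)$. Indeed $(\triangle\check p)_n=p(n+1)-p(n)=q(n)=\check q_n$, so $\triangle\check p=\check q$. If $\deg p=k\Ge 1$ with leading coefficient $a_k$, a direct expansion of $(x+1)^k-x^k$ shows that $q$ has degree $k-1$ and leading coefficient $k\,a_k$. Consequently, for every $m\Ge k$ we have $m-1\Ge k-1=\deg q$, and applying the inductive hypothesis to $q$ yields
   \begin{align*}
(\triangle^m\check p)_n=(\triangle^{m-1}\check q)_n=q^{(m-1)}(0),\quad n\in\zbb_+.
   \end{align*}

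It then remains to identify $q^{(m-1)}(0)$ with $p^{(m)}(0)$ for $m\Ge k$. When $m=k$, both sides equal $k!\,a_k$: the left because $q$ has degree $k-1$ and leading coefficient $k\,a_k$, so $q^{(k-1)}(0)=(k-1)!\,k\,a_k$; the right because $p$ has degree $k$ and leading coefficient $a_k$. When $m>k$, the order of differentiation exceeds the degree on both sides ($m-1>k-1=\deg q$ and $m>k=\deg p$), so $q^{(m-1)}(0)=0=p^{(m)}(0)$. This closes the induction.

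The argument is entirely elementary; the only point carrying genuine content is the degree-lowering property of the passage $p\mapsto q=p(\,\cdot\,+1)-p(\,\cdot\,)$ together with the tracking of its leading coefficient, and I expect the mild bookkeeping in matching $q^{(m-1)}(0)$ with $p^{(m)}(0)$ across the two cases $m=k$ and $m>k$ to be the most error-prone step rather than a conceptual obstacle. Alternatively, one could first reduce to monomials $p(x)=x^j$ by linearity of both $\triangle$ and $p\mapsto p^{(m)}(0)$, noting that for $\deg p=k\Le m$ every monomial $x^j$ appearing satisfies $j\Le k\Le m$; but the induction on degree seems the most transparent route.
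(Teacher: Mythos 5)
Your proof is correct and takes exactly the route the paper indicates: the paper gives no detailed argument, stating only that the lemma ``can be proved by induction on the degree of the polynomial in question,'' which is precisely your induction via $q(x)=p(x+1)-p(x)$, $\triangle\check{p}=\check{q}$, and the degree/leading-coefficient bookkeeping. The verification that $q^{(m-1)}(0)=p^{(m)}(0)$ in both cases $m=k$ (both equal $k!\,a_k$) and $m>k$ (both vanish) is accurate, so the argument is complete.
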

As shown below, a nonconstant polynomial perturbation of a
Hamburger moment sequence is never a Hamburger moment
sequence.
   \begin{lem} \label{mompr7}
Let $\{\gamma_n\}_{n=0}^{\infty}$ be a Hamburger
moment sequence having a compactly supported
representing measure $\mu$ and let $p\in \rbb[x].$
Then the following conditions are equivalent{\em :}
   \begin{enumerate}
   \item[(i)] the sequence $\{\gamma_n + p(n)\}_{n=0}^{\infty}$ is
a Hamburger moment sequence,
   \item[(ii)] $p$ is a constant polynomial and $\mu(\{1\}) + p(0)
\Ge 0.$
   \end{enumerate}
Moreover, if {\em (ii)} holds, then $\mu+p(0)\delta_1$
is a compactly supported representing measure of
$\{\gamma_n + p(n)\}_{n=0}^{\infty}$.
   \end{lem}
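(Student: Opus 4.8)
The plan is to prove the direction (ii)$\Rightarrow$(i) first, as it is the easier one, and then to tackle the substantive implication (i)$\Rightarrow$(ii). For (ii)$\Rightarrow$(i), suppose $p$ is the constant polynomial $p\equiv c$ with $\mu(\{1\})+c\Ge 0$. Then the perturbed sequence is $\gamma_n + c$, and the candidate representing measure is $\nu:=\mu+c\,\delta_1$. I would check that $\nu$ is a genuine (positive) Borel measure: the only atom affected is at the point $1$, where $\nu(\{1\})=\mu(\{1\})+c\Ge 0$ by hypothesis, while on every Borel set avoiding $1$ the measure agrees with $\mu$, which is positive. A short computation using $\int x^n\,d\delta_1 = 1^n = 1$ then gives $\int x^n\,d\nu = \gamma_n + c = \gamma_n + p(n)$ for all $n$, since $p(n)=c$ is constant. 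Because $\mu$ is compactly supported and adding a point mass at $1$ does not enlarge the support beyond a bounded set, $\nu$ is compactly supported, which also disposes of the ``moreover'' clause.

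For the hard direction (i)$\Rightarrow$(ii), the key idea is to exploit the rigidity of compactly supported moment sequences together with the difference operator $\triangle$ from Lemma~\ref{Dick}. Assume $\{\gamma_n + p(n)\}$ is a Hamburger moment sequence; I must force $p$ to be constant. Let $\nu$ be a compactly supported representing measure for the perturbed sequence (its representing measure is compactly supported because, by Lemma~\ref{mompr6} applied to an appropriate compactly supported complex measure, any such moment sequence is determinate with the same support bound). Then $\int x^n\,d(\nu-\mu) = p(n)$ for all $n\in\zbb_+$, where $\nu-\mu$ is a compactly supported \emph{signed} measure. The main obstacle is to show that a sequence of the form $\{p(n)\}$, with $p$ nonconstant, cannot arise as the moment sequence of any signed measure with bounded support. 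The decisive tool is the finite-difference identity: by Lemma~\ref{Dick}, if $p$ has degree $k$, then applying $\triangle^m$ to $\{p(n)\}$ yields the constant value $p^{(m)}(0)$ for all $m\Ge k$; in particular $\triangle^k$ gives the nonzero constant $p^{(k)}(0)=k!\,a_k$ (where $a_k$ is the leading coefficient), while $\triangle^{k+1}$ of $\{p(n)\}$ vanishes.

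The strategy is then to translate the vanishing of high-order differences into a growth constraint that is incompatible with $p$ being nonconstant. Writing $c_n:=\int x^n\,d(\nu-\mu)$, one has $(\triangle^m c)_n = \int x^n (x-1)^m\, d(\nu-\mu)$, and by Lemma~\ref{mompr5} this is bounded in modulus by $|\nu-\mu|(\rbb)\,R^n(R+1)^m$ for a common support bound $R$. If $p$ were nonconstant of degree $k\Ge 1$, then on the one hand $(\triangle^k c)_n = p^{(k)}(0)\neq 0$ is a fixed nonzero constant independent of $n$, yet I would compare this against a version of the estimate that forces the differences to decay or be controlled; the cleanest route is to observe that $\triangle^{k+1}$ of $\{p(n)\}$ is identically zero, so $(\triangle^{k+1}c)_n=0$ for all $n$, meaning $\int x^n(x-1)^{k+1}\,d(\nu-\mu)=0$ for every $n$. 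By Lemma~\ref{mompr2} this forces the compactly supported measure $(x-1)^{k+1}\,d(\nu-\mu)$ to be zero, hence $\nu-\mu$ is supported on $\{1\}$, i.e. $\nu-\mu = \lambda\,\delta_1$ for some real $\lambda$. But then $p(n)=\int x^n\,d(\nu-\mu)=\lambda$ for all $n$, so $p$ is constant, contradicting $k\Ge 1$. This establishes that $p$ must be constant, and reading off $\lambda=p(0)$ together with positivity of $\nu=\mu+p(0)\delta_1$ gives $\mu(\{1\})+p(0)\Ge 0$, completing (i)$\Rightarrow$(ii). I expect the delicate point to be justifying compact support and determinacy of $\nu$ via Lemma~\ref{mompr6}, since that is what legitimizes treating $\nu-\mu$ as a compactly supported signed measure to which Lemma~\ref{mompr2} applies.
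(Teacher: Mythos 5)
Your endgame is correct and is, in slightly different packaging, the paper's own argument. The paper applies $\triangle^{k}$ (with $k=\deg p$) to the identity $\tilde\gamma_n=\gamma_n+p(n)$, uses Lemma~\ref{mompr2} to identify the measure $(x-1)^k\,d\tilde\mu$ with $(x-1)^k\,d\mu+p^{(k)}(0)\,\delta_1$, and evaluates at the Borel set $\{1\}$ to force $p^{(k)}(0)=0$ when $k\Ge 1$; you instead apply $\triangle^{k+1}$ to the difference measure $\nu-\mu$, use Lemma~\ref{mompr2} to conclude $(x-1)^{k+1}\,d(\nu-\mu)=0$, and localize $\nu-\mu$ to $\{1\}$. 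Both routes rest on exactly the same two tools, Lemma~\ref{Dick} and Lemma~\ref{mompr2}, and your treatment of the constant case, of (ii)$\Rightarrow$(i), and of the ``moreover'' clause is fine.

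The genuine gap is the step you yourself flag as delicate: compact support of $\nu$. As written, your justification is circular. Lemma~\ref{mompr6} can only be invoked for a sequence that is \emph{already exhibited} as the moment sequence of some compactly supported complex Borel measure, and no such exhibition is available here: when $p$ is nonconstant --- precisely the case you must rule out --- the sequence $\{\gamma_n+p(n)\}_{n=0}^{\infty}$ admits no compactly supported complex representing measure at all. Indeed, your own $\triangle^{k+1}$ argument proves this: if $\sigma$ were such a measure, then $\sigma-\mu$ would be a compactly supported complex measure representing $\{p(n)\}_{n=0}^{\infty}$, and the argument forces $\sigma-\mu=\lambda\delta_1$, hence $p$ constant. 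So Lemma~\ref{mompr6} cannot deliver compact support (or determinacy) of $\nu$, and the phrase ``an appropriate compactly supported complex measure'' has no referent. What does work, and what the paper does, is a growth estimate combined with the argument \emph{inside} the proof of Lemma~\ref{mompr6} rather than its statement: by Lemma~\ref{mompr5}, $\gamma_{2n}\Le\mu(\rbb)R^{2n}$ for some $R\in\rbb_+$, and since $\sup_{n\in\zbb_+}|p(n)|e^{-n}<\infty$, the moments of $\nu$ satisfy $\int_{\rbb}x^{2n}\,d\nu(x)=\gamma_{2n}+p(2n)\Le C M^{2n}$ with $M=\max(R,e)$; the root-test argument from the proof of Lemma~\ref{mompr6} (via the exercise in Rudin cited there) then shows that any positive representing measure of the perturbed sequence, in particular $\nu$, is supported in $[-M,M]$. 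With that substitution your proof is complete and coincides in substance with the paper's.
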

   \begin{proof}
Without loss of generality we may assume that the
polynomial $p$ is nonzero, that is $k:=\deg p \Ge 0.$

   (i)$\Rightarrow$(ii) Define $\{\tilde
\gamma_n\}_{n=0}^{\infty}$ by
   \begin{align} \label{ti-du}
\tilde \gamma_n=\gamma_n + p(n), \quad n\in \zbb_+.
   \end{align}
Let $\tilde \mu$ be a representing measure of $\{\tilde
\gamma_n\}_{n=0}^{\infty}.$ Applying Lemma~\ref{mompr5} to
$\{\gamma_n\}_{n=0}^{\infty}$ and using the fact that
$\sup_{n\in \zbb_+}|p(n)|e^{-n} < \infty$, we deduce that
the measure $\tilde\mu$ is compactly supported (see the
proof of Lemma~ \ref{mompr6}). Since, by Lemma~\ref{Dick},
$(\triangle^k \check{p})_n = p^{(k)}(0)$ for all $n\in
\zbb_+,$ applying $\triangle^k$ to both sides of
\eqref{ti-du} yields
   \begin{align*}
\int_{\rbb} x^n(x-1)^k d \tilde\mu(x) = \int_{\rbb}
x^n(x-1)^k d \mu(x) + p^{(k)}(0), \quad n \in \zbb_+.
   \end{align*}
Together with Lemma \ref{mompr2}, this implies that
   \begin{align} \label{Upart-Sam}
\int_{\varDelta} (x-1)^k d \tilde\mu(x) =
\int_{\varDelta} (x-1)^k d \mu(x) + p^{(k)}(0)
\delta_1(\varDelta), \quad \varDelta \in \borel{\rbb}.
   \end{align}
If $k\Ge 1,$ then by substituting $\varDelta=\{1\},$
we get $p^{(k)}(0)=0,$ which gives a contradiction.
Therefore, $p$ must be a constant polynomial.
Substituting $k=0$ into \eqref{Upart-Sam}, we get
(ii).

The implication (ii)$\Rightarrow$(i) and the
``moreover'' part are obvious.
   \end{proof}
The following is an immediate consequence of
Lemma~\ref{mompr7} applied to $\gamma_n=0$ and
$\mu=0$.
   \begin{lem} \label{mompr10}
For $p\in \rbb[x],$ the following conditions are
equivalent{\em :}
   \begin{enumerate}
   \item[(i)] $\{p(n)\}_{n=0}^{\infty}$ is a Hamburger
moment sequence,
   \item[(ii)] $\{p(n)\}_{n=0}^{\infty}$ is a
Stieltjes moment sequence,
   \item[(iii)] $p$ is  a constant polynomial and $p(0) \Ge 0.$
   \end{enumerate}
   \end{lem}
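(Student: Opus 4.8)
The plan is to derive Lemma~\ref{mompr10} as an immediate corollary of Lemma~\ref{mompr7}, exactly as the text announces, by specializing to the trivial moment sequence $\gamma_n=0$ with representing measure $\mu=0$. First I would establish the implication (iii)$\Rightarrow$(ii), which is the constructive direction and requires no appeal to the preceding lemmas: if $p$ is the constant polynomial with value $p(0)\Ge 0$, then $\{p(n)\}_{n=0}^{\infty}$ is the constant sequence $\{p(0)\}_{n=0}^{\infty}$, and this is represented by the nonnegative multiple $p(0)\delta_0$ of a point mass on $\rbb_+$ (or by $p(0)\delta_1$, matching the ``moreover'' part of Lemma~\ref{mompr7}), so it is a Stieltjes moment sequence. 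The implication (ii)$\Rightarrow$(i) is trivial, since every Stieltjes moment sequence is a fortiori a Hamburger moment sequence (a representing measure supported in $\rbb_+$ is in particular a representing measure on $\rbb$).

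The substance is the implication (i)$\Rightarrow$(iii), and here I would invoke Lemma~\ref{mompr7} directly with $\gamma_n=0$ for all $n$ and $\mu=0$. The hypotheses of that lemma are satisfied: $\{0\}_{n=0}^{\infty}$ is trivially a Hamburger moment sequence with the compactly supported representing measure $\mu=0$, and $p\in\rbb[x]$. Under assumption (i), the sequence $\{\gamma_n + p(n)\}_{n=0}^{\infty}=\{p(n)\}_{n=0}^{\infty}$ is a Hamburger moment sequence, so condition (i) of Lemma~\ref{mompr7} holds. Therefore condition (ii) of that lemma holds, which reads: $p$ is a constant polynomial and $\mu(\{1\}) + p(0)\Ge 0$. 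Since $\mu=0$ gives $\mu(\{1\})=0$, this reduces precisely to ``$p$ is constant and $p(0)\Ge 0$,'' which is condition (iii) here.

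I expect no genuine obstacle in this argument, since all the difficulty has been absorbed into Lemma~\ref{mompr7}; the only point demanding a moment of care is verifying that the specialization $\gamma_n=0$, $\mu=0$ legitimately meets the standing hypotheses of Lemma~\ref{mompr7} (compact support of the representing measure and membership of $p$ in $\rbb[x]$), both of which are immediate. Thus the whole proof amounts to the single sentence that Lemma~\ref{mompr10} follows from Lemma~\ref{mompr7} by taking $\gamma_n=0$ and $\mu=0$, after which the cycle of implications (i)$\Rightarrow$(iii)$\Rightarrow$(ii)$\Rightarrow$(i) closes the equivalence.
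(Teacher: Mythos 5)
Your proposal is correct and is exactly the paper's proof: the authors deduce Lemma~\ref{mompr10} from Lemma~\ref{mompr7} by precisely the specialization $\gamma_n=0$, $\mu=0$ that you use, with the same trivial handling of (iii)$\Rightarrow$(ii)$\Rightarrow$(i). One small slip worth fixing: the constant sequence $\{p(0)\}_{n=0}^{\infty}$ is \emph{not} represented by $p(0)\delta_0$, whose moments are $p(0),0,0,\dots$ because $\int x^n\,d\delta_0=0$ for $n\Ge 1$; only your parenthetical choice $p(0)\delta_1$ is a representing measure, which is exactly what the ``moreover'' part of Lemma~\ref{mompr7} supplies.
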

   \begin{rem}
The implication (i)$\Rightarrow$(iii) of
Lemma~\ref{mompr10} can be proved more directly. Let
$\mu$ be a representing measure of
$\{p(n)\}_{n=0}^{\infty}.$ Clearly, $p(0) = \mu(\rbb)
\Ge 0.$ Suppose, on the contrary, that $k:=\deg p \Ge
1.$ By the Schwarz inequality, we have
   \begin{align*}
p(n)^2 = \Big(\int_{\rbb} x^0 x^n d\mu(x)\Big)^2 \Le
\int_{\rbb} x^0 d\mu(x) \int_{\rbb} x^{2n} d\mu(x) =
p(0)p(2n), \quad n \in \zbb_+.
   \end{align*}
Denote by $a$ the leading coefficient of $p.$ The
above inequality implies that
   \begin{align*}
a^2 = \lim_{n\to\infty} \frac{p(n)^2}{n^{2k}} \Le
\lim_{n \to \infty} \frac{p(0)p(2n)}{n^{2k}} =0,
   \end{align*}
which contradicts the fact that $a \neq 0.$ Therefore, $p$
is a constant polynomial.
   \hfill $\diamondsuit$
   \end{rem}
For the sake of completeness, we provide a proof of the
following lemma which will be used in subsequent parts of
this paper.
   \begin{lem} \label{main-ad}
Let $G\colon\borel{X} \to \ogr{\hh}$ be a regular spectral
measure on a topological Hausdorff space $X$ with compact
support, $\varphi\colon X \to \cbb$ be a continuous
function and $\varSigma$ be a relatively open subset of
$\supp{G}.$ Then the spectral integral $\int_{\varSigma}
\varphi d G,$ which is a bounded operator, is positive if
and only if $\varSigma \subseteq \{x\in X\colon \varphi(x)
\Ge 0\}.$
   \end{lem}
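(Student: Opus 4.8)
The plan is to prove the two implications separately: the ``if'' part is a routine consequence of the functional calculus, while the ``only if'' part needs a localization argument tied to the support of $G$.

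First I would dispose of the ``if'' direction. Assuming $\varSigma \subseteq \{x\in X\colon \varphi(x)\Ge 0\}$, the bounded Borel function $\chi_{\varSigma}\varphi$ is real-valued and nonnegative, and $\int_{\varSigma}\varphi\,dG = \int_X \chi_{\varSigma}\varphi\,dG$. For every $h\in\hh$ the measure $\mu_h(\cdot)=\is{G(\cdot)h}{h}$ is positive, so $\is{(\int_{\varSigma}\varphi\,dG)h}{h} = \int_{\varSigma}\varphi\,d\mu_h \Ge 0$; hence the operator is positive.

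For the ``only if'' direction I would argue by contraposition. The key auxiliary fact is that any nonempty relatively open subset $\varUpsilon$ of $\supp G$ satisfies $G(\varUpsilon)\neq 0$: writing $\varUpsilon = U\cap\supp G$ with $U$ open in $X$, one has $G(U)=G(\varUpsilon)$ because $G$ vanishes off its support, and $G(\varUpsilon)=0$ would force $U$ into the largest open $G$-null set, i.e.\ $\varUpsilon=\emptyset$. Now suppose there is $x_0\in\varSigma$ with $\varphi(x_0)\notin[0,\infty)$. Since $[0,\infty)$ is closed, either $\im\varphi(x_0)\neq 0$, or $\varphi(x_0)$ is real and negative. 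By continuity of $\varphi$ I can choose an open $U_0\ni x_0$ on which the relevant real function keeps a strict sign: in the first case $\im\varphi$ stays bounded away from $0$ with a fixed sign, in the second $\re\varphi$ stays strictly negative. Put $\varSigma_0=U_0\cap\varSigma$, a nonempty relatively open subset of $\supp G$, so $P:=G(\varSigma_0)\neq 0$. Choosing $h\in\ob P$ with $h\neq 0$, the measure $\mu_h$ is concentrated on $\varSigma_0$ (because $\mu_h(\varDelta)=\is{G(\varDelta\cap\varSigma_0)h}{h}$), whence $\is{(\int_{\varSigma}\varphi\,dG)h}{h}=\int_{\varSigma_0}\varphi\,d\mu_h$. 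In the first case this quantity has nonzero imaginary part of fixed sign, contradicting the fact that a positive operator yields real values of the quadratic form; in the second case it has strictly negative real part, again contradicting positivity. Either way we conclude $\varSigma\subseteq\{x\colon\varphi(x)\Ge 0\}$.

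The main obstacle is that $\varphi$ is complex-valued: positivity of $\int_{\varSigma}\varphi\,dG$ forces the quadratic form to be real and nonnegative, so the contradiction must be extracted either from the imaginary part (when $\varphi(x_0)$ is nonreal) or from the real part (when $\varphi(x_0)<0$), both reached through the same localization scheme. The only technical care needed is verifying that $\varSigma_0$ is genuinely relatively open in $\supp G$ (so the support lemma applies and $G(\varSigma_0)\neq 0$) and that $\mu_h$ is supported in $\varSigma_0$, both of which are immediate from the multiplicativity $G(\varDelta_1)G(\varDelta_2)=G(\varDelta_1\cap\varDelta_2)$ of the spectral measure.
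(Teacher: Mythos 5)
Your proof is correct, but it takes a genuinely different route from the paper's in the ``only if'' direction. The paper substitutes $G(\varDelta)h$ for $h$ to obtain $\int_{\varDelta}\varphi(x)\is{G(dx)h}{h}\Ge 0$ for \emph{every} Borel $\varDelta\subseteq\varSigma$, then invokes a measure-theoretic result (Ash, Theorem~1.6.11) to conclude that the entire bad set $K_{\varphi}=\{x\in\varSigma\colon\varphi(x)\in\cbb\setminus\rbb_+\}$ satisfies $G(K_{\varphi})=0$; continuity of $\varphi$ enters only to make $K_{\varphi}$ relatively open in $\supp{G}$, and the same support fact you isolate (a nonempty relatively open subset of $\supp{G}$ has nonzero $G$-measure) finishes the argument. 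You instead argue by contraposition at a single bad point: continuity furnishes a neighborhood $U_0$ of $x_0$ on which either $\im\varphi$ keeps a fixed nonzero sign or $\re\varphi$ stays strictly negative, and a nonzero vector $h\in\ob{G(\varSigma_0)}$ localizes the quadratic form so that $\is{(\int_{\varSigma}\varphi\,dG)h}{h}$ is either nonreal or has strictly negative real part, contradicting positivity. The two arguments share their essential mechanism --- multiplicativity of the spectral measure to localize the quadratic form, plus the support fact --- but yours is self-contained (no appeal to an external measure-theory theorem) at the price of a two-case analysis on real and imaginary parts, whereas the paper's treats the whole bad set at once and is correspondingly shorter. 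Your technical steps check out: $\varSigma_0=U_0\cap\varSigma$ is indeed a nonempty relatively open subset of $\supp{G}$, hence $G(\varSigma_0)\neq 0$, and the concentration of $\mu_h$ on $\varSigma_0$ follows from $G(\varDelta)G(\varSigma_0)=G(\varDelta\cap\varSigma_0)$ exactly as you say.
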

   \begin{proof}
Since $\sup_{x \in \supp{G}}|\varphi(x)| < \infty,$
$\int_{\varSigma} \varphi d G \in \ogr{\hh}.$ To prove the
``only if'' part, assume that $\int_{\varSigma} \varphi d G
\Ge 0.$ Then $\int_{\varSigma} \varphi(x) \is{G(d x)h}{h}
\Ge 0$ for all $h\in \hh.$ Substituting $G(\varDelta)h$ in
place of $h$ with $\varDelta\in \borel{\varSigma}$, we see
that $\int_{\varDelta} \varphi(x) \is{G(dx)h}{h} \Ge 0$ for
all $\varDelta \in \borel{\varSigma}$ and $h\in \hh.$
Combined with \cite[Theorem~1.6.11]{Ash00}, this implies
that $\big\langle G\big(K_{\varphi})h,h\big\rangle = 0$ for
all $h\in \hh,$ where $K_{\varphi}:=\{x\in \varSigma\colon
\varphi(x) \in \cbb \setminus \rbb_+\}.$ Since
$K_{\varphi}$ is a relatively open subset of $\supp{G}$ and
$G(K_{\varphi}) = 0$, we conclude that $K_{\varphi} =
\emptyset,$ which means that $\varSigma \subseteq \{x\in
X\colon \varphi(x) \Ge 0\}.$ The ``if'' part is obvious.
   \end{proof}
   \begin{lem} \label{wkw-integ}
Let $G\colon \borel{X} \to \ogr{\hh}$ be a regular
spectral measure on a topological Hausdorff space $X$
with compact support and let $\varphi_n\colon X \to
\rbb,$ $n\in \zbb_+,$ be continuous functions. Then
the following conditions are equivalent{\em :}
   \begin{enumerate}
   \item[(i)] $\{\varphi_n(x)\}_{n=0}^{\infty}$ is a
Stieltjes moment sequence for every $x\in \supp{G},$
   \item[(ii)] $\{\int_X\varphi_n(x) \is{G(d x)h}{h}\}_{n=0}^{\infty}$ is a
Stieltjes moment sequence for every $h\in \hh.$
   \end{enumerate}
   \end{lem}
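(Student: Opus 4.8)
The plan is to prove Lemma~\ref{wkw-integ}, which asserts the equivalence of a pointwise Stieltjes moment condition on the functions $\varphi_n$ and a Stieltjes moment condition on their scalar spectral integrals against $G$. Let me think about how to establish each direction.

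**Setting up the framework.** First I would fix $h \in \hh$ and consider the finite positive Borel measure $\mu_h$ on $X$ defined by $\mu_h(\varDelta) = \is{G(\varDelta)h}{h}$ for $\varDelta \in \borel{X}$. Since $G$ has compact support and each $\varphi_n$ is continuous, the integrals $\int_X \varphi_n \, d\mu_h$ are finite. The key reformulation is that the scalar sequence in condition (ii) is precisely $\big\{\int_X \varphi_n(x)\, d\mu_h(x)\big\}_{n=0}^{\infty}$, so the lemma is really a statement relating pointwise Stieltjes-ness of $\{\varphi_n(x)\}_n$ to Stieltjes-ness of the integrated sequence. Recall that by the classical Stieltjes theorem, a sequence $\{\gamma_n\}$ is a Stieltjes moment sequence if and only if both Hankel matrices $[\gamma_{i+j}]$ and $[\gamma_{i+j+1}]$ are positive semidefinite, i.e. $\sum_{i,j} \gamma_{i+j}\, c_i \bar c_j \Ge 0$ and $\sum_{i,j} \gamma_{i+j+1}\, c_i \bar c_j \Ge 0$ for all finitely supported scalar sequences $\{c_i\}$.

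**The implication (i)$\Rightarrow$(ii).** I expect this direction to be the more routine one. Assuming (i), for each fixed $x \in \supp G$ the two Hankel-type quadratic forms in $\{\varphi_n(x)\}$ are nonnegative. For a finitely supported family $\{c_i\}_{i=0}^{N} \subseteq \cbb$, the expression $\sum_{i,j=0}^{N} \varphi_{i+j}(x)\, c_i \bar c_j$ is a continuous nonnegative function of $x$ on $\supp G$ (a finite linear combination of continuous $\varphi_{i+j}$). Integrating against the positive measure $\mu_h$ preserves nonnegativity, giving $\sum_{i,j} \big(\int_X \varphi_{i+j}\, d\mu_h\big) c_i \bar c_j \Ge 0$, and likewise for the shifted Hankel form with $\varphi_{i+j+1}$. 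Hence the integrated sequence satisfies the Stieltjes positivity conditions; combined with the fact that it is bounded by $\mu_h(X) \cdot \sup_{x \in \supp G}|\varphi_n(x)|$ (which controls growth but is not even needed for Stieltjes solvability on compacta), this yields (ii).

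**The implication (ii)$\Rightarrow$(i) and the main obstacle.** This is where I anticipate the real difficulty. The naive attempt would be to reverse the integration, but nonnegativity of $\int_X f\, d\mu_h$ for each $h$ does not immediately force $f \Ge 0$ pointwise unless one exploits the spectral structure over all $h$ simultaneously. The right tool is Lemma~\ref{main-ad}: for the continuous function $f_{c}(x) := \sum_{i,j} \varphi_{i+j}(x)\, c_i \bar c_j$, condition (ii) says that for every $h$ the integral $\int_X f_c\, d\mu_h \Ge 0$, which means the spectral integral $\int_{\supp G} f_c \, dG$ is a positive operator. Applying Lemma~\ref{main-ad} with $\varSigma = \supp G$ (which is trivially relatively open in itself) and $\varphi = f_c$ then gives $\supp G \subseteq \{x : f_c(x) \Ge 0\}$, i.e. $f_c(x) \Ge 0$ for every $x \in \supp G$. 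Carrying this out for all finitely supported $\{c_i\}$ and for both Hankel forms (the unshifted $\varphi_{i+j}$ and the shifted $\varphi_{i+j+1}$) shows that for each fixed $x \in \supp G$ the two Stieltjes positivity conditions on $\{\varphi_n(x)\}_n$ hold. Since these continuous functions are, for each $x$, a bounded sequence (again by compactness of $\supp G$), the Stieltjes moment problem on a compact interval is solvable, yielding (i). The one subtlety to handle carefully is that Lemma~\ref{main-ad} must be invoked for each of the countably many test polynomials $f_c$ separately and the resulting null sets (or rather the conclusion $f_c \Ge 0$ on all of $\supp G$) combined; because each application yields containment of the \emph{entire} support $\supp G$ in $\{f_c \Ge 0\}$ with no exceptional set, no delicate intersection-of-null-sets argument is required, which is exactly why the formulation in terms of Lemma~\ref{main-ad} is clean.
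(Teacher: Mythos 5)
Your proof follows the paper's argument essentially step for step: both directions rest on the Hankel-matrix (positive semidefiniteness of $[\gamma_{i+j}]$ and $[\gamma_{i+j+1}]$) characterization of Stieltjes moment sequences, which the paper invokes via \cite[Theorem~6.2.5]{B-C-R}, and your implication (ii)$\Rightarrow$(i) passes from positivity of the operators $\int_{\supp{G}} f_c\, dG$ to pointwise nonnegativity of $f_c$ on $\supp{G}$ by applying Lemma~\ref{main-ad} with $\varSigma=\supp{G}$, exactly as in the paper. One caveat: your closing claim that $\{\varphi_n(x)\}_{n=0}^{\infty}$ is bounded for fixed $x$ ``by compactness of $\supp{G}$'' is false in general (compactness bounds each $\varphi_n$ separately, not uniformly in $n$), and the appeal to the compact-interval moment problem is superfluous --- the two Hankel positivity conditions you established already yield a Stieltjes moment sequence by the very criterion you stated at the outset, so this misstep does not affect the validity of the argument.
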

   \begin{proof}
As in Lemma~\ref{main-ad}, $\int_{X} \varphi d G\in
\ogr{\hh}$ whenever $\varphi\colon X \to \cbb$ is
continuous.

(i)$\Rightarrow$(ii) This can be easily deduced from
\cite[Theorem~6.2.5]{B-C-R} (see also
\cite[Lemma~3.2]{A-C-J-S}).

(ii)$\Rightarrow$(i) Fix $n\in \zbb_+$ and
$\boldsymbol{\lambda}=(\lambda_0, \ldots, \lambda_n)
\in \cbb^{n+1}.$ Define the continuous function
$\varPhi_{\boldsymbol{\lambda}} \colon X \to \cbb$ by
   \begin{align*}
\varPhi_{\boldsymbol{\lambda}}(x) = \sum_{k,l=0}^n
\varphi_{k+l}(x) \lambda_k \bar\lambda_l, \quad x\in
X.
   \end{align*}
Applying the implication (iii)$\Rightarrow$(i) of
\cite[Theorem~6.2.5]{B-C-R}, we see that
   \begin{align*}
\int_{X} \varPhi_{\boldsymbol{\lambda}}(x) \is{G(d
x)h}{h} = \sum_{k,l=0}^n \int_{X} \varphi_{k+l}(x)
\is{G(d x)h}{h} \lambda_k \bar\lambda_l \Ge 0, \quad h
\in \hh.
   \end{align*}
Hence $\int_{X} \varPhi_{\boldsymbol{\lambda}} d G \Ge
0,$ so by Lemma~\ref{main-ad},
$\varPhi_{\boldsymbol{\lambda}}(x) \Ge 0$ for all
$x\in \supp{G},$ that is
   \begin{align*}
\sum_{k,l=0}^n \varphi_{k+l}(x) \lambda_k
\bar\lambda_l \Ge 0, \quad x\in \supp{G}.
   \end{align*}
A similar argument shows that
   \begin{align*}
\sum_{k,l=0}^n \varphi_{k+l+1}(x) \lambda_k
\bar\lambda_l \Ge 0, \quad x\in \supp{G}.
   \end{align*}
Finally, by applying the implication
(i)$\Rightarrow$(iii) of \cite[Theorem~6.2.5]{B-C-R},
we complete the proof.
   \end{proof}
Before concluding this section, we recall the
celebrated criterion for subnormality of bounded
operators essentially due to Lambert (see \cite{lam};
see also \cite[Proposition 2.3]{St1}).
   \begin{align} \label{Lam}
   \begin{minipage}{70ex}
{\em An operator $T\in \ogr{\hh}$ is subnormal if and
only if for every $h\in \hh,$ $\{\|T^n
h\|^2\}_{n=0}^{\infty}$ is a Stieltjes moment
sequence.}
   \end{minipage}
   \end{align}

The following general characterization of subnormal
operators fits nicely into the scope of the present
investigations. It will be used to provide the second
proof of Corollary~\ref{main-cor}.
   \begin{thm}  \label{dziw-char}
Suppose that $\varphi_n\colon X \to \rbb,$ $n\in
\zbb_+,$ are continuous functions on a topological
Hausdorff space $X$ of the form
   \begin{align} \label{char-mom-a}
\varphi_n(x) = \int_{\rbb_+} t^n d \mu_x(t), \quad n
\in \zbb_+, \, x \in X,
   \end{align}
where each $\mu_x$ is a compactly supported complex
Borel measure on $\rbb_+.$ Furthermore, assume that
$T\in \ogr{\hh}$ is an operator for which there exists
a regular spectral measure $G\colon \borel{X} \to
\ogr{\hh}$ with compact support such that
   \begin{align} \label{char-mom-b}
T^{*n}T^n = \int_X \varphi_n(x) G(d x), \quad n\in
\zbb_+.
   \end{align}
Then $T$ is subnormal if and only if $\mu_x$ is a
positive measure for every $x\in \supp{G}.$
   \end{thm}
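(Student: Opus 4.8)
The plan is to prove both directions using the subnormality criterion \eqref{Lam}, which says that $T$ is subnormal if and only if $\{\|T^n h\|^2\}_{n=0}^{\infty}$ is a Stieltjes moment sequence for every $h\in \hh$. The key observation is that \eqref{char-mom-b} lets us rewrite the relevant quantities as spectral integrals. First I would compute, for fixed $h\in \hh$,
   \begin{align*}
\|T^n h\|^2 = \is{T^{*n}T^n h}{h} = \int_X \varphi_n(x) \is{G(d x)h}{h}, \quad n\in \zbb_+,
   \end{align*}
which transfers the moment question for $T$ into a question about the functions $\varphi_n$ integrated against the scalar spectral measure $\is{G(\cdot)h}{h}$. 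This reduces the theorem to showing that the right-hand side is a Stieltjes moment sequence (in $n$) for every $h$ if and only if each $\mu_x$ is positive on $\supp{G}$.

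The second step is to apply Lemma~\ref{wkw-integ}, which gives exactly the equivalence between $\{\varphi_n(x)\}_{n=0}^{\infty}$ being a Stieltjes moment sequence for every $x\in \supp{G}$ (condition (i)) and $\{\int_X \varphi_n(x)\is{G(dx)h}{h}\}_{n=0}^{\infty}$ being a Stieltjes moment sequence for every $h\in\hh$ (condition (ii)). Combining this with the computation above and \eqref{Lam}, I see that $T$ is subnormal if and only if for every $x\in\supp{G}$ the sequence $\{\varphi_n(x)\}_{n=0}^{\infty}$ is a Stieltjes moment sequence. Thus the whole matter is reduced to a pointwise statement about the functions $\varphi_n$.

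The final step is to interpret the pointwise condition via the representation \eqref{char-mom-a}. For fixed $x$, the sequence $\{\varphi_n(x)\}_{n=0}^{\infty}$ is the moment sequence of the compactly supported complex Borel measure $\mu_x$ on $\rbb_+$. The claim is that this is a Stieltjes moment sequence precisely when $\mu_x$ is a positive measure. This is where Lemma~\ref{mompr6} enters: since $\mu_x$ is compactly supported (so in particular its moments coincide with those of any representing measure on a compact set), being a Hamburger moment sequence forces $\mu_x$ to be positive and determinate. I expect the main obstacle to be the careful matching of the Stieltjes versus Hamburger distinction here: Lemma~\ref{mompr6} is stated for the Hamburger problem, so I would note that because $\mu_x$ is already supported in $\rbb_+$, positivity of $\mu_x$ automatically yields a Stieltjes representing measure, while conversely any Stieltjes (hence Hamburger) moment property combined with compact support and Lemma~\ref{mompr6} identifies the representing measure with $\mu_x$ itself and forces it to be positive. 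Assembling these pointwise equivalences over all $x\in\supp{G}$ then completes the proof.
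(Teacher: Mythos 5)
Your proposal is correct and follows essentially the same route as the paper's proof: Lambert's criterion \eqref{Lam} combined with \eqref{char-mom-b}, then Lemma~\ref{wkw-integ} to reduce to the pointwise condition on $\supp{G}$, then Lemma~\ref{mompr6} with \eqref{char-mom-a} to identify that condition with positivity of $\mu_x$. Your careful handling of the Stieltjes-versus-Hamburger issue (positivity plus support in $\rbb_+$ gives Stieltjes; conversely Stieltjes implies Hamburger, so determinacy in Lemma~\ref{mompr6} forces the representing measure to be $\mu_x$ itself) is a point the paper leaves implicit, and it is handled correctly.
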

   \begin{proof}
By \eqref{Lam} and \eqref{char-mom-b}, the operator $T$ is
subnormal if and only if the sequence $\{\int_X\varphi_n(x)
\is{G(d x)h}{h}\}_{n=0}^{\infty}$ is a Stieltjes moment
sequence for every $h\in \hh.$ By Lemma~\ref{wkw-integ},
the latter holds if and only if
$\{\varphi_n(x)\}_{n=0}^{\infty}$ is a Stieltjes moment
sequence for every $x\in \supp{G},$ which in view of
\eqref{char-mom-a} and Lemma~\ref{mompr6} is equivalent to
the fact that $\mu_x$ is a positive measure for every $x\in
\supp{G}.$
   \end{proof}
   \section{\label{Sec4}Proof of the main result and some consequences}
Before proving Theorem~\ref{main}, which is the main result
of this paper, we make the following useful observation
being a direct consequence of \eqref{tplus0} and
\eqref{Sam-Jan2Zen}.
   \begin{align} \label{luft-1}
   \begin{minipage}{70ex}
{\em If $T = \big[\begin{smallmatrix} V & E \\
0 & Q \end{smallmatrix}\big] \in \gqbh,$ then $E\neq 0$ if
and only if $\sigma_{\sharp}(|Q|,|E|) \neq \emptyset,$}
   \end{minipage}
   \end{align}
where $\sigma_{\sharp}(|Q|,|E|)=\sigma(|Q|,|E|) \cap
(\rbb_+ \times (0,\infty)).$
   \begin{proof}[Proof of Theorem~ \ref{main}]
(i)$\Leftrightarrow$(iii) In view of
Proposition~\ref{mem-ber-1}(i), $(|Q|,|E|)$ is a pair of
commuting positive operators. Let $G$ be the joint spectral
measure of $(|Q|,|E|).$ Then, by Theorem~\ref{spmt}(i) and
\cite[Theorem~2.18]{Rud}, the measure $G$ is compactly
supported and regular. It follows from \eqref{fr-fr-1} and
\eqref{dudu-1} that
   \begin{align} \label{slipy}
\varOmega_n = \int_{\rbb_+^2} \varphi_n d G, \quad
n\in \zbb_+,
   \end{align}
where $\varphi_n\colon \rbb_+^2 \to \rbb_+$ is the
continuous function defined by
   \begin{align} \label{form-1}
\varphi_n(s,t) =
   \begin{cases}
1 & \text{if } n=0,
   \\
t^2\big(\sum_{j=0}^{n-1} s^{2j}\big) + s^{2n} &
\text{if } n\Ge 1,
   \end{cases}
\quad (s,t) \in \rbb_+^2.
   \end{align}
Notice that by Proposition~\ref{babuc}(ii) and \eqref{Lam},
the operator $T$ is subnormal if and only if
$\{\is{\varOmega_n h}{h}\}_{n=0}^{\infty}$ is a Stieltjes
moment sequence for every $h\in \hh_2.$ Hence in view of
\eqref{slipy} and Lemma~\ref{wkw-integ}, $T$ is subnormal
if and only if $\supp{G} \subseteq \varXi,$ where $\varXi$
is the set of all points $(s,t)\in \rbb_+^2$ for which
$\{\varphi_n(s,t)\}_{n=0}^{\infty}$ is a Stieltjes moment
sequence. Therefore, according to Theorem~\ref{spmt}(i), to
get the equivalence (i)$\Leftrightarrow$(iii), it is enough
to show that $\varXi= \dbbc_+ \cup \big(\rbb_+ \times
\{0\}\big).$ For this purpose, take $(s,t)\in \rbb_+^2$ and
consider two cases.

{\sc Case 1.} $s=1.$

Then by \eqref{form-1}, we have $\varphi_n(s,t)= 1 + n
t^2.$ Applying Lemma~\ref{mompr10} to $p(x)= 1 + t^2 x$, we
see that $(1,t)\in \varXi$ if and only if $t=0.$

{\sc Case 2.} $s\neq 1.$

Then by \eqref{form-1} we have
   \begin{align} \label{mlask-1}
\varphi_n(s,t) = \frac{t^2}{1-s^2} +
\bigg(1-\frac{t^2}{1-s^2}\bigg)s^{2n}, \quad n \in
\zbb_+.
   \end{align}
This implies that
   \begin{align} \label{smi-rep1}
\varphi_n(s,t) = \int_{\rbb_+} x^n \mu_{s,t}(d x),
\quad n\in \zbb_+,
   \end{align}
where $\mu_{s,t} \colon \borel{\rbb_+} \to \rbb$ is the
signed measure of the form
   \begin{align} \label{smi-rep2}
\mu_{s,t} = \frac{t^2}{1-s^2} \delta_{1} +
\bigg(1-\frac{t^2}{1-s^2}\bigg) \delta_{s^2}.
   \end{align}
Using Lemma~\ref{mompr6}, we conclude that $(s,t)\in
\varXi$ if and only if the measure $\mu_{s,t}$ is
positive, or equivalently if and only if
   \begin{align} \label{surp-1}
0 \Le \frac{t^2}{1-s^2} \Le 1.
   \end{align}
If $t=0,$ then \eqref{surp-1} holds. If $t \neq 0,$ then
\eqref{surp-1} holds if and only if $(s,t) \in \dbbc_+.$
Thus $(s,t) \in \varXi$ if and only if $(s,t) \in
\big(\dbbc_+ \cup \big(\rbb_+ \times \{0\}\big)\big)
\setminus \{(1,0)\}.$

Summarizing Cases 1 and 2, we conclude that $\varXi=
\dbbc_+ \cup \big(\rbb_+ \times \{0\}\big),$ which gives
the desired equivalence (i)$\Leftrightarrow$(iii).

(ii)$\Leftrightarrow$(iii) This is obvious due to the fact
that $\sigma(|Q|,|E|) \subseteq \rbb_+^2$ (see
Theorem~\ref{spmt}(i)).

Before proving the equivalence (ii)$\Leftrightarrow$(iv),
we make necessary preparations. Set $K=\sigma(|Q|,|E|).$
Let $G_{|Q|}$ and $G_{|E|}$ be the spectral measures of
$|Q|$ and $|E|,$ respectively. Since $P$ is the orthogonal
projection of $\hh_2$ onto $\hh_2 \ominus \jd{|E|}$ and
$\jd{|E|}=\ob{G_{|E|}(\{0\})},$ we see that
   \begin{align}  \label{pge-1}
P=G_{|E|} ((0,\infty)).
   \end{align}
By Proposition~\ref{mem-ber-1}(i), $|Q|$ commutes with
$|E|$ so it commutes with $G_{|E|}.$ As a consequence,
the operators $|Q|,$ $|E|$ and $P$ commute. Combined
with Theorem~\ref{spmt}(i), this yields
   \begin{align} \notag
\int_{\sigma_{\sharp}(|Q|,|E|)} (s^2 + t^2) G(ds,dt) &=
\int_{K \cap \big(\rbb_+ \times (0,\infty)\big)} (s^2 +
t^2) G(ds,dt)
   \\  \notag
&= \int_{\rbb_+ \times (0,\infty)} (s^2 + t^2) G(ds,dt)
   \\  \notag
&= \int_{\rbb_+} s^2 G_{|Q|}(ds) G_{|E|} ((0,\infty)) +
\int_{(0,\infty)} t^2 G_{|E|}(dt)
   \\  \notag
&= |Q|^2 G_{|E|} ((0,\infty)) + |E|^2.
  \\  \label{sum-a2}
&= (|Q|P)^2 + |E|^2.
   \end{align}

(ii)$\Rightarrow$(iv) Suppose that (ii) holds. Then by
\eqref{sum-a2}, we have
   \begin{align*}
(|Q|P)^2 + |E|^2 = \int_{\sigma_{\sharp}(|Q|,|E|)} (s^2 +
t^2) G(ds,dt) \Le G(\sigma_{\sharp}(|Q|,|E|)) \Le I,
   \end{align*}
which means that $(|Q|P,|E|)$ is a spherical contraction.

(iv)$\Rightarrow$(ii) Suppose now that (iv) holds, i.e.,
$(|Q|P)^2 + |E|^2 \Le I.$ Since $I_{\hh_2} - P$ is the
orthogonal projection of $\hh_2$ onto $\jd{|E|},$ we deduce
that
   \begin{align} \label{dra-1-q}
(|Q|P)^2 + |E|^2 \Le P.
   \end{align}
Observe now that
   \begin{align*}
G(\sigma_{\sharp}(|Q|,|E|)) & = G\big(K \cap
\big(\rbb_+ \times (0,\infty)\big)\big)
   \\
& = G(\rbb_+ \times (0,\infty)) = G_{|E|}((0,\infty))
\overset{\eqref{pge-1}} = P.
   \end{align*}
Combined with \eqref{sum-a2} and \eqref{dra-1-q}, this leads to
   \begin{align*}
\int_{\sigma_{\sharp}(|Q|,|E|)} \big(1-(s^2 + t^2)\big)
G(ds,dt) \Ge 0.
   \end{align*}
Since $\sigma_{\sharp}(|Q|,|E|)$ is a relatively open
subset of $\sigma(|Q|,|E|),$ we infer from
Theorem~\ref{spmt}(i) and Lemma~\ref{main-ad} that
$\sigma_{\sharp}(|Q|,|E|) \subseteq \dbbc_+.$

(iv)$\Leftrightarrow$(v) That $\mcal=\overline{\ob{|E|}}$
reduces $|Q|$ and $|E|$ follows from the fact that $P$
commutes with $|Q|$ and $|E|.$ Combined with the equations
$(|Q|P)|_{\jd{|E|}} = |E|\big|_{\jd{|E|}} = 0,$ this leads
to the desired equivalence.

(v)$\Leftrightarrow$(vi) This equivalence can be
proved in the same way as the equivalence
(ii)$\Leftrightarrow$(iii) of
Proposition~\ref{kontr-2}.

The ``moreover'' part is a direct consequence of (iii) and
\eqref{Sam-Jan2Zen}. This completes the proof.
   \end{proof}
In the rest of this section we record some consequences of
Theorem~\ref{main}. We begin with the following corollary
which is immediate from Theorem~\ref{main}(v).
   \begin{cor} \label{gyeongju-c}
Suppose that $T = \big[\begin{smallmatrix} V & E \\
0 & Q \end{smallmatrix}\big] \in \gqbh$ and $\boldsymbol
z=(z_1, z_2, z_3)\in \cbb^3$ is such that $|z_1|=1$ and
$|z_j| \Le 1$ for $j=2,3.$ Then $T_{\boldsymbol
z}:=\Big[\begin{smallmatrix} z_1 V & z_2 E \\[.3ex] 0 & z_3 Q
\end{smallmatrix}\Big] \in \gqbh.$ Moreover, if $T$ is
subnormal, then so is $T_{\boldsymbol z}.$
   \end{cor}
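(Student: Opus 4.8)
The plan is to treat the two assertions separately: first confirm that $T_{\boldsymbol z}$ again satisfies the four defining conditions \eqref{gqb-1}--\eqref{gqb-4}, and then deduce subnormality from Theorem~\ref{main} by tracking how the Taylor spectrum $\sigma(|Q|,|E|)$ transforms under the obvious rescaling of the entries.

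For membership in $\gqbh$ I would verify the four conditions by direct computation, exploiting that each entry of $T_{\boldsymbol z}$ is a scalar multiple of the corresponding entry of $T$. Since $|z_1|=1$, we have $(z_1V)^*(z_1V)=|z_1|^2 V^*V=I$, giving \eqref{gqb-1}. Condition \eqref{gqb-2} follows from $(z_1V)^*(z_2E)=\bar z_1 z_2\, V^*E=0$, and \eqref{gqb-3} from $(z_3Q)(z_2E)^*(z_2E)=|z_2|^2 z_3\, QE^*E=|z_2|^2 z_3\, E^*EQ=(z_2E)^*(z_2E)(z_3Q)$. Finally, the quasinormality \eqref{gqb-4} of $z_3Q$ is immediate from that of $Q$, since $(z_3Q)(z_3Q)^*(z_3Q)=|z_3|^2 z_3\,QQ^*Q=|z_3|^2 z_3\,Q^*QQ=(z_3Q)^*(z_3Q)(z_3Q)$. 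None of these steps presents any difficulty.

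For the subnormality claim the key observation is that $|z_3Q|=|z_3|\,|Q|$ and $|z_2E|=|z_2|\,|E|$, because for a scalar $\lambda$ and operator $A$ one has $|\lambda A|^2=|\lambda|^2 A^*A$, whence $|\lambda A|=|\lambda|\,|A|$ by uniqueness of the positive square root. Thus the pair attached to $T_{\boldsymbol z}$ is $(|z_3|\,|Q|,\,|z_2|\,|E|)$, still a commuting pair of positive operators by Proposition~\ref{mem-ber-1}(i). Assuming $T$ subnormal, Theorem~\ref{main}(iii) gives $\sigma(|Q|,|E|)\subseteq\dbbc_+\cup(\rbb_+\times\{0\})$. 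I would then apply the spectral mapping theorem, Theorem~\ref{spmt}(iii), to the continuous map $\boldsymbol\psi(s,t)=(|z_3|s,\,|z_2|t)$ to obtain $\sigma(|z_3|\,|Q|,\,|z_2|\,|E|)=\boldsymbol\psi(\sigma(|Q|,|E|))$.

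It then remains to check that $\boldsymbol\psi$ maps the region $\dbbc_+\cup(\rbb_+\times\{0\})$ into itself, which is the only place the hypotheses $|z_2|\Le 1$ and $|z_3|\Le 1$ are used: if $(s,t)\in\dbbc_+$, then $(|z_3|s)^2+(|z_2|t)^2\Le s^2+t^2\Le 1$ with $|z_3|s,|z_2|t\Ge 0$, so $\boldsymbol\psi(s,t)\in\dbbc_+$; and if $t=0$ then trivially $\boldsymbol\psi(s,t)\in\rbb_+\times\{0\}$. Hence $\sigma(|z_3|\,|Q|,\,|z_2|\,|E|)\subseteq\dbbc_+\cup(\rbb_+\times\{0\})$, and Theorem~\ref{main}(iii) applied to $T_{\boldsymbol z}\in\gqbh$ yields its subnormality. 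I expect no serious obstacle here; the only mildly delicate point is the behaviour in the degenerate cases $z_2=0$ or $z_3=0$, and the spectral-mapping route dissolves it uniformly, since the image of $\boldsymbol\psi$ simply collapses onto a coordinate axis that already lies in the target region (consistently with \eqref{tplus0}). As an alternative one could argue through the spherical-contraction condition Theorem~\ref{main}(iv), using $|z_3|^2(|Q|P)^2+|z_2|^2|E|^2\Le(|Q|P)^2+|E|^2\Le I$, but that route forces one to track separately the projection onto $\overline{\ob{|z_2E|}}$ when $z_2=0$, so I would prefer the spectral version above.
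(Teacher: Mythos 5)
Your argument is correct in every step: the entrywise verification of \eqref{gqb-1}--\eqref{gqb-4}, the identities $|z_2E|=|z_2|\,|E|$ and $|z_3Q|=|z_3|\,|Q|$, the application of Theorem~\ref{spmt}(iii) to $\boldsymbol{\psi}(s,t)=(|z_3|s,|z_2|t)$, and the observation that $\boldsymbol{\psi}$ maps $\dbbc_+\cup(\rbb_+\times\{0\})$ into itself, so that Theorem~\ref{main}(iii) applied to $T_{\boldsymbol z}\in\gqbh$ yields subnormality. The paper, however, takes a shorter route: it declares the corollary ``immediate from Theorem~\ref{main}(v)'', i.e., from the spherical-contraction characterization on $\mcal=\overline{\ob{|E|}}$ rather than from the spectral-region characterization. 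Indeed, if $z_2\neq 0$, then $\overline{\ob{|z_2E|}}=\mcal$ and the operator inequality $|z_3|^2|Q|^2+|z_2|^2|E|^2 \Le |Q|^2+|E|^2 \Le I$ restricted to $\mcal$ shows that $T_{\boldsymbol z}$ satisfies condition (v); if $z_2=0$, the space $\overline{\ob{|z_2E|}}$ collapses to $\{0\}$ and condition (v) holds vacuously. This avoids the spectral mapping theorem altogether, and it handles the degenerate case you were wary of just as painlessly as your route does (the restriction space degenerates instead of the spectrum). What your version buys in exchange is the explicit geometric picture: the scaling $(s,t)\mapsto(|z_3|s,|z_2|t)$ contracts the spectral region of Figure~\ref{fig6} into itself. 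This is precisely the technique the paper itself deploys later for the Cauchy dual (Theorem~\ref{main-Ca}, via $\boldsymbol{\psi}(s,t)=\big(\tfrac{s}{s^2+t^2},\tfrac{t}{s^2+t^2}\big)$) and for the pencils of Section~\ref{Sec9}, so your proof, while slightly longer than needed here, is the one that generalizes.
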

The next corollary follows from Proposition~\ref{kontr-2}
and Theorem~\ref{main} (recall that by
Proposition~\ref{ogr-1} the contractions of class $\gqb$
are of norm $1$).
   \begin{cor}  \label{main-cor}
Any contraction of class $\gqb$ is subnormal.
   \end{cor}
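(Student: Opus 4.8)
The statement to prove is Corollary~\ref{main-cor}: any contraction of class $\gqb$ is subnormal. Here is how I would approach it.

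\medskip

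The plan is to combine the characterization of contractivity from Proposition~\ref{kontr-2} with the characterization of subnormality from Theorem~\ref{main}. First I would observe that if $T = \big[\begin{smallmatrix} V & E \\ 0 & Q \end{smallmatrix}\big] \in \gqbh$ is a contraction, then by the equivalence (i)$\Leftrightarrow$(iii) of Proposition~\ref{kontr-2} we have $\sigma(|Q|,|E|) \subseteq \dbbc_+$. The goal is to show $T$ is subnormal, and the most economical route is through condition (iii) of Theorem~\ref{main}, which asserts subnormality is equivalent to $\sigma(|Q|,|E|) \subseteq \dbbc_+ \cup (\rbb_+ \times \{0\})$.

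\medskip

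The key step is then almost immediate: since $\dbbc_+ \subseteq \dbbc_+ \cup (\rbb_+ \times \{0\})$ trivially, the spectral inclusion $\sigma(|Q|,|E|) \subseteq \dbbc_+$ guaranteed by contractivity forces $\sigma(|Q|,|E|) \subseteq \dbbc_+ \cup (\rbb_+ \times \{0\})$, which is exactly condition (iii) of Theorem~\ref{main}. Invoking the equivalence (i)$\Leftrightarrow$(iii) of that theorem yields the subnormality of $T$. I would present this as a short deduction, citing both Proposition~\ref{kontr-2} and Theorem~\ref{main} explicitly.

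\medskip

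I do not anticipate any genuine obstacle here, as this corollary is designed to follow directly from the two main structural results already established. The only thing worth noting is the mild asymmetry between the spectral regions: contractivity places the spectrum inside the closed quarter-disc $\dbbc_+$, whereas the subnormality region is slightly larger, permitting also the points on the positive horizontal axis $\rbb_+ \times \{0\}$ (corresponding to the degenerate direction where $|E|$ vanishes). Since the contractive region is contained in the subnormal region, the implication goes through cleanly in one direction; the converse is of course false, since subnormal operators of class $\gqb$ need not be contractions. The parenthetical reminder in the statement that contractions of class $\gqb$ have norm exactly $1$ (from Proposition~\ref{ogr-1}) is not needed for the proof of subnormality itself, so I would simply rely on Proposition~\ref{kontr-2} and Theorem~\ref{main} and keep the argument to a sentence or two.
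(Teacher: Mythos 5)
Your proof is correct and is essentially the paper's own argument: the paper derives Corollary~\ref{main-cor} directly from Proposition~\ref{kontr-2} and Theorem~\ref{main} via precisely the inclusion $\dbbc_+ \subseteq \dbbc_+ \cup (\rbb_+ \times \{0\})$ that you use. (The paper also offers a second, independent proof via Theorem~\ref{dziw-char}, but your route matches its primary one.)
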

   As shown below, Corollary~\ref{main-cor} can also be
   deduced from Theorem~\ref{dziw-char}.
   \begin{proof}[Second Proof of Corollary~\ref{main-cor}]
Assume that $T$ is a contraction. Let $\widetilde G$ be the
joint spectral measure of $(|Q|,|E|).$ Set $X=\dbbc_+.$ It
follows from Proposition~\ref{kontr-2} that
$\sigma(|Q|,|E|) \subseteq X.$ Hence, by
Theorem~\ref{spmt}(i), the function $G \colon \borel{X} \to
\ogr{\hh}$ defined by
   \begin{align*}
G(\varDelta) = \delta_{(1,0)}(\varDelta) I_{\hh_1} \oplus
\widetilde G(\varDelta), \quad \varDelta \in \borel{X},
   \end{align*}
is a spectral measure. In view of
Proposition~\ref{babuc}(ii) and \eqref{slipy}, the
condition \eqref{char-mom-b} holds with $\varphi_n$ as in
\eqref{form-1}. Moreover, by \eqref{smi-rep1} and
\eqref{smi-rep2}, the condition \eqref{char-mom-a} holds,
where $\mu_x$ is the positive Borel measure on $\rbb_+$
given by \eqref{smi-rep2} for $x=(s,t)\in X \setminus
\{(1,0)\}$ and $\mu_{(1,0)} = \delta_{1}.$ Hence, by
Theorem~\ref{dziw-char}, $T$ is subnormal.
   \end{proof}
Below we indicate two subclasses of $\gqb$ for which
subnormality is completely characterized by contractivity.
   \begin{cor} \label{dra-b}
Suppose $T =  \big[\begin{smallmatrix} V & E \\
0 & Q \end{smallmatrix}\big] \in \gqbh,$ where $E=\alpha
U,$ $\alpha\in \cbb \setminus \{0\}$ and $U\in
\ogr{\hh_2,\hh_1}$ is an isometry. Then the following
conditions are equivalent{\em :}
   \begin{enumerate}
   \item[(i)] $T$ is subnormal,
   \item[(ii)] $\|Q\|^2 + |\alpha|^2 \Le 1,$
   \item[(iii)] $T$ is a contraction.
   \end{enumerate}
   \end{cor}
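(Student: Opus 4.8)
The plan is to exploit the very special form of the entry $E$. Since $U$ is an isometry, $U^*U = I_{\hh_2}$, so
\[
|E|^2 = E^*E = |\alpha|^2\, U^*U = |\alpha|^2 I_{\hh_2},
\]
whence $|E| = |\alpha|\, I_{\hh_2}$ with $|\alpha|>0$. In particular $\sigma(|E|) = \{|\alpha|\}$, and the symmetric version of \eqref{tplus0} applied to the commuting positive pair $(|Q|,|E|)$ (positivity and commutativity come from Proposition~\ref{mem-ber-1}(i)) yields at once
\[
\sigma(|Q|,|E|) = \sigma(|Q|) \times \{|\alpha|\}.
\]
Because $|\alpha|>0$, every point of this spectrum already lies in $\rbb_+ \times (0,\infty)$, so that $\sigma_{\sharp}(|Q|,|E|) = \sigma(|Q|,|E|)$; this coincidence is the crux of the argument.

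With this identification the three conditions collapse to a single containment. By the equivalence (i)$\Leftrightarrow$(ii) of Theorem~\ref{main}, $T$ is subnormal if and only if $\sigma_{\sharp}(|Q|,|E|) \subseteq \dbbc_+$, while by the equivalence (i)$\Leftrightarrow$(iii) of Proposition~\ref{kontr-2}, $T$ is a contraction if and only if $\sigma(|Q|,|E|) \subseteq \dbbc_+$. Since the two spectra coincide in the present situation, both (i) and (iii) are each equivalent to $\sigma(|Q|,|E|) \subseteq \dbbc_+$, and hence to one another.

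It remains to recognize this containment as condition (ii). Recalling that $\dbbc_+ = \{(s,t) \in \rbb_+^2 \colon s^2 + t^2 \Le 1\}$ and using $\sigma(|Q|,|E|) = \sigma(|Q|) \times \{|\alpha|\}$, the inclusion $\sigma(|Q|,|E|) \subseteq \dbbc_+$ holds precisely when $s^2 + |\alpha|^2 \Le 1$ for every $s \in \sigma(|Q|)$, i.e.\ when $(\max \sigma(|Q|))^2 + |\alpha|^2 \Le 1$. Since $|Q| \Ge 0$, the last part of \eqref{font-1} gives $\max \sigma(|Q|) = \||Q|\| = \|Q\|$, so this is exactly $\|Q\|^2 + |\alpha|^2 \Le 1$, which is (ii). I anticipate no genuine obstacle: the only step requiring care is the reduction of the Taylor spectrum via \eqref{tplus0}, after which the result is a direct consequence of the main theorem and the contractivity criterion. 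Conceptually, once $|E|$ is a scalar the two-dimensional spectral picture of Figure~\ref{fig6} degenerates to a horizontal segment, and whether that segment lands inside the subnormality region $\dbbc_+$ is governed solely by the scalar inequality $\|Q\|^2 + |\alpha|^2 \Le 1$.
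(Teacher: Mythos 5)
Your proof is correct and follows essentially the same route as the paper's: both reduce the Taylor spectrum to the product $\sigma(|Q|)\times\{|\alpha|\}$ via \eqref{tplus0}, then apply Theorem~\ref{main} (using $\alpha\neq 0$ to ignore the $\rbb_+\times\{0\}$ part of the spectral region) and Proposition~\ref{kontr-2}, together with $\max\sigma(|Q|)=\|Q\|$. The only nitpick is your citation of the ``moreover'' part of \eqref{font-1} for $\max\sigma(|Q|)=\||Q|\|$, since that part assumes $0\notin\sigma(|Q|)$; the identity holds for any positive operator (and follows from the first part of \eqref{font-1}), so this is a referencing slip rather than a gap.
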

   \begin{proof}
By \eqref{tplus0}, we have
   \begin{align} \label{fur-1}
\sigma(|Q|,|E|) = \sigma(|Q|,|\alpha|I_{\hh_2}) =
\sigma(|Q|) \times \{|\alpha|\}.
   \end{align}
Since
   \begin{align} \label{crak-1}
\max\sigma(|Q|) = \||Q|\| =\|Q\|,
   \end{align}
we deduce from \eqref{fur-1} that
   \begin{align} \label{fur-2}
\text{$\sigma(|Q|,|E|) \subseteq \dbbc_+$ if and only if
$\|Q\|^2 + |\alpha|^2 \Le 1.$}
   \end{align}

(i)$\Leftrightarrow$(ii) Using the assumption that
$\alpha\neq 0$ and applying Theorem~\ref{main}, we deduce
from \eqref{fur-1} and \eqref{fur-2} that $T$ is subnormal
if and only if $\|Q\|^2 + |\alpha|^2 \Le 1.$

(ii)$\Leftrightarrow$(iii) This is a direct consequence of
\eqref{fur-2} and Proposition~\ref{kontr-2}.
   \end{proof}
The following is a variant of Corollary~ \ref{dra-b} with
essentially the same proof.
   \begin{cor}
Suppose $T =  \big[\begin{smallmatrix} V & E \\
0 & Q \end{smallmatrix}\big] \in \gqbh,$ where $Q=\alpha
U,$ $\alpha\in \cbb$ and $U\in \ogr{\hh_2}$ is an isometry.
If $E\neq 0,$ then the following conditions are
equivalent{\em :}
   \begin{enumerate}
   \item[(i)] $T$ is subnormal,
   \item[(ii)] $|\alpha|^2 + \|E\|^2 \Le 1,$
   \item[(iii)] $T$ is a contraction.
   \end{enumerate}
   \end{cor}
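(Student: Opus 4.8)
The plan is to follow the proof of Corollary~\ref{dra-b}, with the roles of $Q$ and $E$ interchanged. First I would establish the analogue of the identity $|E|=|\alpha|I$ used there. Since $T \in \gqbh,$ the entry $Q$ is quasinormal; because $Q = \alpha U$ with $U$ an isometry, we have $Q^*Q = |\alpha|^2 U^*U = |\alpha|^2 I_{\hh_2},$ so $|Q| = |\alpha| I_{\hh_2}$ and hence $\sigma(|Q|) = \{|\alpha|\}.$ By \eqref{tplus0} this gives
\begin{align*}
\sigma(|Q|,|E|) = \{|\alpha|\} \times \sigma(|E|).
\end{align*}

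Next I would isolate the single spectral inclusion that drives both equivalences. Using $\max \sigma(|E|) = \||E|\| = \|E\|$ together with $\sigma(|E|) \subseteq [0,\|E\|],$ the product set $\{|\alpha|\} \times \sigma(|E|)$ attains its outermost point (the one maximizing $s^2+t^2$) at $(|\alpha|,\|E\|),$ and $\dbbc_+$ is downward closed in this sense; consequently
\begin{align*}
\text{$\sigma(|Q|,|E|) \subseteq \dbbc_+$ if and only if $|\alpha|^2 + \|E\|^2 \Le 1$.}
\end{align*}
The equivalence (ii)$\Leftrightarrow$(iii) then follows immediately from this inclusion and the equivalence (i)$\Leftrightarrow$(iii) of Proposition~\ref{kontr-2}.

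For (i)$\Leftrightarrow$(ii) I would invoke the equivalence (i)$\Leftrightarrow$(ii) of Theorem~\ref{main}, which reduces subnormality to the inclusion $\sigma_{\sharp}(|Q|,|E|) \subseteq \dbbc_+.$ Here the hypothesis $E \neq 0$ enters: by \eqref{luft-1} it forces $\sigma_{\sharp}(|Q|,|E|) \neq \emptyset,$ and more precisely $\sigma_{\sharp}(|Q|,|E|) = \{|\alpha|\} \times \big(\sigma(|E|) \cap (0,\infty)\big)$ still has $\|E\| = \max\sigma(|E|)$ in its second coordinate, because $\|E\| > 0.$ Thus $\sigma_{\sharp}(|Q|,|E|) \subseteq \dbbc_+$ is again equivalent to $|\alpha|^2 + \|E\|^2 \Le 1,$ which yields (i)$\Leftrightarrow$(ii).

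The only point that genuinely departs from the symmetric situation, and the one I expect to require care, is the necessity of the assumption $E \neq 0.$ If $E = 0,$ then $T = V \oplus Q$ is an orthogonal sum of an isometry and the quasinormal (hence subnormal) operator $Q,$ so $T$ would be subnormal regardless of $|\alpha|$; correspondingly $\sigma_{\sharp}(|Q|,|E|) = \emptyset$ and Theorem~\ref{main}(ii) imposes no constraint, making (i) hold while (ii) may fail. I would therefore use $E \neq 0$ precisely to place the extremal value $\|E\|$ inside $\sigma(|E|) \cap (0,\infty),$ which is exactly what couples condition (ii) to subnormality. No serious computational obstacle is anticipated beyond this bookkeeping.
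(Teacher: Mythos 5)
Your proposal is correct and is essentially the paper's own argument: the paper merely notes that this corollary admits ``essentially the same proof'' as Corollary~\ref{dra-b}, and your proof is exactly that argument transposed, using $|Q|=|\alpha|I_{\hh_2}$ and \eqref{tplus0} to obtain $\sigma(|Q|,|E|)=\{|\alpha|\}\times\sigma(|E|)$, then invoking Theorem~\ref{main}(ii) and Proposition~\ref{kontr-2}. Your observation that the hypothesis $E\neq 0$ serves precisely to place $(|\alpha|,\|E\|)$ inside $\sigma_{\sharp}(|Q|,|E|)$ correctly mirrors the role played by the assumption $\alpha\neq 0$ in Corollary~\ref{dra-b}.
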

   \section{\label{Sec6}A solution to the Cauchy dual subnormality
problem in the class $\gqb$} We begin by providing a
complete answer to the question of when the Cauchy dual of
an operator of class $\gqb$ is subnormal.
   \begin{thm} \label{main-Ca}
Suppose that $T = \big[\begin{smallmatrix} V & E \\
0 & Q \end{smallmatrix}\big] \in \gqbh$ is left-invertible.
Then $T'$ is subnormal if and only if $\sigma(|Q|, |E|)
\subseteq \big(\mathbb R^2_+ \setminus \mathbb D_+ \big)
\cup \big(\rbb_+ \times \{0\}\big).$
   \end{thm}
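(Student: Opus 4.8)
The plan is to reduce everything to Theorem~\ref{main} applied to the Cauchy dual $T'$, after expressing the moduli $|Q'|$ and $|E'|$ of its entries as explicit functions of the commuting pair $(|Q|,|E|)$. First I would invoke Proposition~\ref{babuc}(iv): since $T$ is left-invertible, $T' = \big[\begin{smallmatrix} V & E\varOmega_1^{-1} \\ 0 & Q\varOmega_1^{-1}\end{smallmatrix}\big] \in \gqbh$, and by Proposition~\ref{babuc}(ii) with $n=1$ the positive operator $\varOmega_1$ equals $|Q|^2 + |E|^2$. By Proposition~\ref{mem-ber-1}(i) the operators $|Q|$ and $|E|$ commute, so $\varOmega_1$ and $\varOmega_1^{-1}$ commute with both; hence $|Q'|^2 = \varOmega_1^{-1}|Q|^2\varOmega_1^{-1} = |Q|^2\varOmega_1^{-2}$ and likewise $|E'|^2 = |E|^2\varOmega_1^{-2}$, so that by uniqueness of positive square roots
\[
|Q'| = |Q|\varOmega_1^{-1}, \qquad |E'| = |E|\varOmega_1^{-1}.
\]

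Next I would record that, by Proposition~\ref{babuc}(iii), left-invertibility of $T$ is equivalent to invertibility of $\varOmega_1$, and amounts to the existence of $\delta\in(0,\infty)$ with $\sigma(|Q|,|E|) \subseteq \rbb_+^2\setminus\delta\cdot\dbb_+$; in particular $s^2+t^2 \Ge \delta^2 > 0$ for every $(s,t)$ in the compact set $K := \sigma(|Q|,|E|)$. Consequently the inversion map $\boldsymbol\psi = (\psi_1,\psi_2)\colon K \to \rbb_+^2$ given by $\psi_1(s,t) = s/(s^2+t^2)$ and $\psi_2(s,t) = t/(s^2+t^2)$ is well defined and continuous on $K$, and the computation above identifies $|Q'| = \psi_1(|Q|,|E|)$ and $|E'| = \psi_2(|Q|,|E|)$ in the joint functional calculus of $(|Q|,|E|)$. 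The spectral mapping theorem for pairs, Theorem~\ref{spmt}(iii), then yields $\sigma(|Q'|,|E'|) = \boldsymbol\psi(K)$.

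Applying Theorem~\ref{main} to $T'$, I would conclude that $T'$ is subnormal if and only if $\boldsymbol\psi(K) \subseteq \dbbc_+ \cup (\rbb_+\times\{0\})$, and it remains to translate this into a condition on $K$ itself. The map $\boldsymbol\psi$ is the planar inversion $(s,t)\mapsto (s,t)/(s^2+t^2)$: it preserves the directions of rays from the origin and satisfies $\psi_1^2+\psi_2^2 = 1/(s^2+t^2)$, so it is an involution on $\rbb_+^2\setminus\{(0,0)\}$. Hence for $(s,t)\in K$ one has $\boldsymbol\psi(s,t)\in\dbbc_+$ exactly when $s^2+t^2\Ge 1$, and $\boldsymbol\psi(s,t)\in\rbb_+\times\{0\}$ exactly when $t=0$. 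Therefore $\boldsymbol\psi(K)\subseteq \dbbc_+\cup(\rbb_+\times\{0\})$ if and only if every $(s,t)\in K$ satisfies $s^2+t^2\Ge 1$ or $t=0$, that is, if and only if $\sigma(|Q|,|E|) \subseteq (\rbb_+^2\setminus\dbb_+)\cup(\rbb_+\times\{0\})$, which is the asserted criterion.

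The only step needing genuine care is the identity $\sigma(|Q'|,|E'|) = \boldsymbol\psi(\sigma(|Q|,|E|))$: one must first guarantee that $\varOmega_1$ is invertible so that $\boldsymbol\psi$ is continuous on all of $K$ (this is where left-invertibility is essential), and then check that $|Q'|$ and $|E'|$ are precisely $\psi_1$ and $\psi_2$ evaluated at the pair in the joint functional calculus, before Theorem~\ref{spmt}(iii) can be invoked. By contrast, the inversion geometry, including the matching of the boundary stratum $t=0$ with the $\rbb_+\times\{0\}$ piece of the target, is elementary and poses no difficulty.
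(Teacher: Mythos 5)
Your proposal is correct and follows essentially the same route as the paper's proof: express $|Q'| = |Q|\varOmega_1^{-1}$ and $|E'| = |E|\varOmega_1^{-1}$ via Proposition~\ref{babuc}(iii)--(iv), apply the spectral mapping theorem (Theorem~\ref{spmt}(iii)) with the inversion map $\boldsymbol{\psi}(s,t)=\big(s/(s^2+t^2),\,t/(s^2+t^2)\big)$, and then invoke Theorem~\ref{main}(iii) for $T'$. The only difference is one of exposition: you spell out the square-root computation and the elementary inversion geometry that the paper compresses into ``it is easily seen'' and the final sentence of its proof.
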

   \begin{proof}
Since $T$ is left invertible, we infer from
Proposition~\ref{babuc}(iii) that $\varOmega_1$ is
invertible and
   \begin{align} \label{tipr-0}
\sigma(|Q|,|E|) \subseteq \big\{(s,t)\in \rbb_+^2\colon s^2
+ t^2 \Ge \|\varOmega_1^{-1}\|^{-1}\big\}.
   \end{align}
Therefore, the function $\boldsymbol{\psi}\colon
\sigma(|Q|,|E|) \to \rbb^2$ given by
   \begin{align*}
\boldsymbol{\psi}(s,t) = \Big(\frac{s}{s^2+t^2},
\frac{t}{s^2+t^2}\Big), \quad (s,t) \in \sigma(|Q|,|E|),
   \end{align*}
is well defined and continuous. By
Proposition~\ref{babuc}(iv), $T'\in \gqbh$ and
   \begin{align} \label{tipr-2}
T'= \begin{bmatrix} V  & \tilde E \\
0 & \tilde Q \end{bmatrix},
   \end{align}
where $\tilde E:=E \varOmega_1^{-1}$ and $\tilde
Q:=Q\varOmega_1^{-1}.$ It is easily seen that
   \begin{align*}
\text{$|\tilde Q| = |Q|(|Q|^2+|E|^2)^{-1}$ and $|\tilde E|
= |E|(|Q|^2+|E|^2)^{-1}.$}
   \end{align*}
Using the Stone-von Neumann functional calculus and
Theorem~\ref{spmt}(iii), we obtain
   \begin{align}  \label{tipr-3}
\sigma(|\tilde Q|,|\tilde E|) =
\sigma(\boldsymbol{\psi}(|Q|,|E|)) =
\boldsymbol{\psi}(\sigma(|Q|,|E|)).
   \end{align}
Applying Theorem~\ref{main}(iii) to $T'$ in place of $T$
and using \eqref{tipr-0}, \eqref{tipr-2} and
\eqref{tipr-3}, we complete the proof.
   \end{proof}
We now show that within the class $\gqb$ the Cauchy dual
subnormality problem has an affirmative solution. What is
more surprising is that we can solve it affirmatively even
if complete hyperexpansivity is replaced by expansivity.
For a more detailed discussion of this question, see
Proposition~\ref{bad-suc0} and Example~\ref{not-contr-3}.
The solution is given in Corollary~\ref{cd-im-su} below
which is a direct consequence of Proposition~\ref{kontr-2}
and Theorem~\ref{main-Ca}. Another way of obtaining
Corollary~\ref{cd-im-su} is to apply
Proposition~\ref{babuc}(iv), Corollary~\ref{main-cor} and
the well-known and easy to prove fact that the Cauchy dual
of an expansive operator is a contraction.
   \begin{cor} \label{cd-im-su}
The Cauchy dual of an expansive operator of class $\gqb$ is
a subnormal contraction.
   \end{cor}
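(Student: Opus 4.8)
The plan is to follow the second, more self-contained route indicated just above the statement, deducing the result from Corollary~\ref{main-cor} together with Proposition~\ref{babuc}(iv). The first thing I would verify is that an expansive operator $T = \big[\begin{smallmatrix} V & E \\ 0 & Q \end{smallmatrix}\big] \in \gqbh$ is left-invertible: since expansivity means $T^*T \Ge I$, the positive operator $T^*T$ is bounded below and hence invertible in $\ogr{\hh}$, so $T$ is left-invertible and the Cauchy dual $T' = T(T^*T)^{-1}$ is well defined.

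The key structural input is Proposition~\ref{babuc}(iv), which guarantees that for a left-invertible operator of class $\gqb$ the Cauchy dual $T'$ is again of class $\gqb$, i.e., $T' \in \gqbh$. With this in hand, the remaining task reduces to showing that $T'$ is a contraction; for then Corollary~\ref{main-cor}, asserting that every contraction of class $\gqb$ is subnormal, immediately yields that $T'$ is a subnormal contraction.

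To see that $T'$ is a contraction, I would compute
\begin{align*}
(T')^* T' = (T^*T)^{-1} T^* T (T^*T)^{-1} = (T^*T)^{-1}.
\end{align*}
Since $T^*T \Ge I$ and inversion reverses the order on the cone of positive invertible operators, we obtain $(T^*T)^{-1} \Le I$, so $(T')^* T' \Le I$, which is exactly contractivity of $T'$.

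I expect no serious obstacle here, as the whole argument is a short assembly of earlier results; the only point requiring mild care is the passage $T^*T \Ge I \Rightarrow (T^*T)^{-1} \Le I$, the standard order-reversing property of operator inversion, while Proposition~\ref{babuc}(iv) does the heavy lifting of confirming that $T'$ lies in the class $\gqb$. Alternatively, one could bypass Corollary~\ref{main-cor} altogether by invoking Proposition~\ref{kontr-2}, which translates expansivity of $T$ into the inclusion $\sigma(|Q|,|E|) \subseteq \rbb_+^2 \setminus \dbb_+$, and then Theorem~\ref{main-Ca}, whose subnormality criterion for $T'$ is automatically satisfied because $\rbb_+^2 \setminus \dbb_+$ is contained in $(\rbb_+^2 \setminus \dbb_+) \cup (\rbb_+ \times \{0\})$; the contractivity of $T'$ would still be supplied by the computation above.
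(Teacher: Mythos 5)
Your proposal is correct and coincides with the paper's own argument: the paper explicitly offers two derivations of this corollary, namely (a) Proposition~\ref{kontr-2} combined with Theorem~\ref{main-Ca}, and (b) Proposition~\ref{babuc}(iv) plus Corollary~\ref{main-cor} plus the well-known fact that the Cauchy dual of an expansion is a contraction, and your main line is exactly route (b) with the "well-known fact" correctly verified via $(T')^*T'=(T^*T)^{-1}\Le I$, while your closing remark is exactly route (a). Nothing is missing.
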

   Below we recapture the affirmative solution to the
Cauchy dual subnormality problem for quasi-Brownian
isometries.
   \begin{cor}[\mbox{\cite[Theorem~4.5]{A-C-J-S}}]
The Cauchy dual of a quasi-Brownian isometry is a subnormal
contraction.
   \end{cor}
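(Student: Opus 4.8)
The plan is to reduce the statement to results already established, splitting into the isometric and non-isometric cases. First I would record two structural facts about a quasi-Brownian isometry $T$. By definition $T$ is a $\triangle_T$-regular $2$-isometry, and as noted in the Introduction (following \cite{R-0}) every $2$-isometry is left-invertible and satisfies $\triangle_T = T^*T - I \Ge 0$. The inequality $T^*T \Ge I$ says precisely that $T$ is expansive, while left-invertibility guarantees that the Cauchy dual $T' = T(T^*T)^{-1}$ is well defined.

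Next I would dispose of the trivial case in which $T$ is itself an isometry. Then $T^*T = I$, so $T' = T(T^*T)^{-1} = T$ is again an isometry; since isometries are quasinormal, hence subnormal, and obviously contractive, there is nothing further to prove in this case.

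For the main case, suppose $T$ is non-isometric. Here I would invoke the identification recorded just after Definition~\ref{defq}, deduced from \cite[Proposition~5.1]{Maj} together with \cite[Theorem~4.1]{A-C-J-S}: a non-isometric quasi-Brownian isometry is exactly an operator of class $\mathcal{I}$, that is, $T = \big[\begin{smallmatrix} V & E \\ 0 & Q \end{smallmatrix}\big]$ with $Q$ isometric relative to a suitable orthogonal decomposition $\hh = \hh_1 \oplus \hh_2$. Since an isometry is quasinormal, such a $T$ is in fact of class $\gqb$. Combining this with the expansivity noted above, $T$ is an expansive operator of class $\gqb$, so Corollary~\ref{cd-im-su} applies directly and yields that $T'$ is a subnormal contraction.

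The hard part will be essentially nonexistent at the computational level; the entire content lies in the two bookkeeping identifications. The one point demanding care is to not overlook the isometric case, because the class-$\mathcal{I}$ description covers only the \emph{non-isometric} quasi-Brownian isometries. Once both cases are in place, the triviality $T' = T$ for isometries and Corollary~\ref{cd-im-su} for the expansive class-$\gqb$ representatives together close the argument.
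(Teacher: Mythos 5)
Your proof is correct and follows essentially the same route as the paper: treat the isometric case trivially via $T'=T$, and in the non-isometric case use \cite[Proposition~5.1]{Maj} to realize $T$ as an expansive operator of class $\gqb$ and invoke Corollary~\ref{cd-im-su}. The only cosmetic difference is that you obtain expansivity directly from $\triangle_T\Ge 0$ (Richter's lemma for $2$-isometries), whereas the paper notes $Q^*Q+E^*E\Ge I$ and passes through Proposition~\ref{kontr-2}; both amount to the same fact.
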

   \begin{proof}
Let $T\in \ogr{\hh}$ be a quasi-Brownian isometry. If $T$
is an isometry, then $T'=T$ is subnormal. If $T$ is not an
isometry, then by \cite[Proposition~ 5.1]{Maj}, $T$ has the
block matrix form \eqref{brep} with entries satisfying the
conditions \eqref{gqb-1}, \eqref{gqb-2} and \eqref{gqb-3},
$Q$ being an isometry. Since each isometry is quasinormal,
we deduce that $T$ is an operator of class $\gqb$ and
$Q^*Q+E^*E \Ge I$. Combined with Proposition~ \ref{kontr-2}
and Corollary~ \ref{cd-im-su}, this implies that $T'$ is a
subnormal contraction, which completes the proof.
   \end{proof}
Regarding Corollaries~\ref{main-cor} and \ref{cd-im-su}, it
is worth pointing out that there are subnormal operators of
class $\gqb$ that are not contractive, and non-expansive
left-invertible operators of class $\gqb$ whose Cauchy dual
operators are subnormal. This can be deduced from
Theorems~\ref{main}(iii) and \ref{main-Ca} and
Propositions~\ref{kontr-2} and \ref{babuc}(iii) via an
abstract non-explicit procedure given in
Theorem~\ref{bielan-2}. Explicit instances are given in
Example~\ref{not-contr-1} below which will be continued in
Sections~\ref{Sec8} and \ref{Sec9} under different
circumstances.
   \begin{exa} \label{not-contr-1}
Our goal in this example is to show that
   \begin{enumerate}
   \item[$1^{\circ}$] {\em for any $\theta\in (1,\infty),$ there exists a
subnormal operator $T$ of class $\gqb$ such that
$\|T\|=\theta,$}
   \item[$2^{\circ}$] {\em for any $\vartheta\in (0,1),$ there exists
$T = \big[\begin{smallmatrix} V & E \\
0 & Q \end{smallmatrix}\big] \in \gqbh$ such that $T$ is
left-invertible, $T'$ is subnormal and
$\|\varOmega_1^{-1}\|^{-1}=\vartheta$ $($cf.\
\eqref{dudu-1} and \eqref{ale-num}$).$}
   \end{enumerate}
For this purpose, let $\kk$ be an infinite dimensional
complex Hilbert space and $\tau,\eta$ be complex numbers
such that $\eta \neq 0.$ Take a non-unitary isometry $V\in
\ogr{\kk}$ and a quasinormal operator $\tilde Q \in
\ogr{\ob{V}}.$ Define the operators $Q_{\tau}, E_{\eta} \in
\ogr{\kk}$ by
   \begin{align*}
Q_{\tau} = \tau I_{\jd{V^*}} \oplus \tilde Q \quad
\text{and} \quad E_{\eta} = \eta P,
   \end{align*}
where $P\in \ogr{\kk}$ is the orthogonal projection of
$\kk$ onto $\jd{V^*}.$ Then the operator $Q_{\tau}$ is
quasinormal. It is easily seen that
$T_{\tau,\eta} :=  \big[\begin{smallmatrix} V & E_{\eta} \\
0 & Q_{\tau} \end{smallmatrix}\big] \in \gqbhh{\kk}$ (see
Definition~\ref{defq}). The operators $|Q_{\tau}|$ and
$|E_{\eta}|$ can be represented relative to the orthogonal
decomposition $\kk=\jd{V^*} \oplus \ob{V}$ as~ follows:
   \begin{align} \label{dy-du-pu}
|Q_{\tau}|=|\tau| I_{\jd{V^*}} \oplus |\tilde Q|,
\quad |E_{\eta}| = |\eta| I_{\jd{V^*}} \oplus 0.
   \end{align}
Since $\jd{V^*} \neq \{0\},$ we infer from
\eqref{tplus0} and Remark~\ref{many-seu-2} that
   \begin{align} \notag
\sigma(|Q_{\tau}|,|E_{\eta}|) & = \sigma(|\tau|
I_{\jd{V^*}},|\eta| I_{\jd{V^*}}) \cup \sigma(|\tilde Q|,0)
   \\ \label{A-yu-ta}
&= \{(|\tau|,|\eta|)\} \cup \big(\sigma(|\tilde Q|) \times
\{0\}\big).
   \end{align}
According to \eqref{font-1}, \eqref{dy-du-pu} and
\eqref{A-yu-ta}, the following chain of equivalences holds
   \begin{align} \label{sryp-u}
\sigma(|Q_{\tau}|,|E_{\eta}|) = \sigma(|Q_{\tau}|) \times
\sigma(|E_{\eta}|) \iff \sigma(|Q_{\tau}|)=\{|\tau|\} \iff
|Q_{\tau}|=|\tau| I.
   \end{align}
Combined with \eqref{A-yu-ta}, Theorem~\ref{main}(ii)
implies that
   \begin{align} \label{Dr-wr}
\text{\em $T_{\tau,\eta}$ is subnormal if and only if
$(|\tau|,|\eta|) \in \dbbc_+.$}
   \end{align}
Since by \eqref{dy-du-pu},
   \begin{align} \label{Wiki-17-2}
E_{\eta}^*E_{\eta} + Q_{\tau}^*Q_{\tau} = (|\tau|^2 +
|\eta|^2) I_{\jd{V^*}} \oplus |\tilde Q|^2,
   \end{align}
we deduce from Proposition~\ref{babuc}(iii) that
   \begin{align} \label{Wiki-17-3}
\text{\em $T_{\tau,\eta}$ is left-invertible if and only if
$|\tilde Q|$ is invertible.}
   \end{align}
In turn, Theorem~\ref{main-Ca} and \eqref{A-yu-ta} together
yield the following:
   \begin{align} \label{wiki-17}
   \begin{minipage}{65ex}
{\em if $T_{\tau,\eta}$ is left-invertible, then
$T_{\tau,\eta}'$ is subnormal if and only if
$(|\tau|,|\eta|) \in \rbb_+^2 \setminus \dbb_+.$}
   \end{minipage}
   \end{align}
It follows from Proposition~\ref{ogr-1} and \eqref{A-yu-ta}
that (cf.\ \eqref{crak-1})
   \begin{align} \label{wiki-17-a}
\|T_{\tau,\eta}\|& = \max\big\{1, \sqrt{|\tau|^2 +
|\eta|^2}, \|\tilde Q\|\big\}.
   \end{align}
We are now ready to justify $1^{\circ}$ and $2^{\circ}$. If
$\theta \in (1,\infty),$ $(|\tau|,|\eta|) \in \dbbc_+$ and
$\tilde Q$ is chosen so that $\|\tilde Q\|=\theta,$ then in
view of \eqref{Dr-wr} and \eqref{wiki-17-a},
$T_{\tau,\eta}$ is a subnormal operator of class $\gqb$
such that $\|T_{\tau,\eta}\|=\theta,$ which proves
$1^{\circ}$. In turn, if $\vartheta \in (0,1),$
$(|\tau|,|\eta|) \in \rbb_+^2 \setminus \dbb_+$ and $\tilde
Q$ is chosen to be invertible with\footnote{\;Appropriately
translating and rescaling an arbitrary quasinormal operator
does the job.} $\||\tilde Q|^{-1}\|^2 = \vartheta^{-1}$,
then in view of \eqref{Wiki-17-2}, \eqref{Wiki-17-3} and
\eqref{wiki-17}, $T_{\tau,\eta}$ is a left-invertible
operator of class $\gqb$ such that $T_{\tau,\eta}'$ is
subnormal and $\|(E_{\eta}^*E_{\eta} +
Q_{\tau}^*Q_{\tau})^{-1}\|^{-1}=\vartheta,$ which yields
$2^{\circ}$.
   \hfill{$\diamondsuit$}
   \end{exa}
   \section{\label{Sec7}Quasi-Brownian isometries of class $\gqb$}
In this section we provide a few characterizations of
quasi-Brownian isometries of class $\gqb.$ Given an
isometry $V\in \ogr{\hh},$ we say that
$\hh=\hh_1\oplus \hh_2$ is the {\em von Neumann-Wold
decomposition} of $\hh$ for $V$ if
$\hh_1=\bigcap_{n=0}^{\infty} V^n(\hh)$ and
$\hh_2=\bigoplus_{n=0}^{\infty} V^n\jd{V^*}$; recall
that $\hh_1$ reduces $V$ to a unitary operator and
$\hh_2$ reduces $V$ to a unilateral shift of
multiplicity $\dim{\jd{V^*}}$ (see
\cite[Theorem~1.1]{SF70} for more details). It is
clear that
   \begin{align} \label{sam-il-1}
\hh_2=\bigoplus_{n=0}^{\infty} V^n\jd{(V|_{\hh_2})^*}.
   \end{align}
   \begin{thm} \label{q2iso}
Suppose $T =  \big[\begin{smallmatrix} V & E \\
0 & Q \end{smallmatrix}\big] \in \gqbh.$ Then the following
conditions are equivalent{\em :}
   \begin{enumerate}
   \item[(i)] $T$ is a quasi-Brownian isometry,
   \item[(ii)] $T$ is a $2$-isometry,
   \item[(iii)] $(|Q|^2-I)(|Q|^2 + |E|^2 - I)=0,$
   \item[(iv)] $\sigma(|Q|,|E|) \subseteq \mathbb
T_+ \cup \big(\{1\} \times \rbb_+\big),$
   \item[(v)] there exists an orthogonal decomposition
$\hh_2=\hh_{\mathrm{i}} \oplus \hh_{\mathrm{si}}$ $($zero
summands are allowed\/$)$ such that
   \begin{enumerate}
   \item[(a)] $\hh_{\mathrm{i}}$ and $\hh_{\mathrm{si}}$
reduce both $Q$ and $|E|,$
   \item[(b)] $Q|_{\hh_{\mathrm{i}}}$ is an isometry and
$\big(Q|_{\hh_{\mathrm{si}}},|E|\big|_{\hh_{\mathrm{si}}}\big)$
is a spherical isometry.
   \end{enumerate}
   \end{enumerate}
Moreover, if $\hh_{\mathrm{i}}$ and $\hh_{\mathrm{si}}$ are
as in $(${\rm v}$)$ and $\hh_{\mathrm{i}}=\hh_{\mathrm{u}}
\oplus \hh_{\mathrm{s}}$ is the von Neumann-Wold
decomposition of $\hh_{\mathrm{i}}$ for
$Q|_{\hh_{\mathrm{i}}}$, then $\hh_{\mathrm{u}}$ and
$\hh_{\mathrm{s}}$ reduce both $Q$ and $|E|,$
$Q|_{\hh_{\mathrm{u}}}$ is a unitary operator and
$Q|_{\hh_{\mathrm{s}}}$ is a unilateral shift $($of finite
of infinite multiplicity$).$
   \end{thm}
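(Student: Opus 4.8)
The plan is to prove the cycle (i)$\Leftrightarrow$(ii)$\Leftrightarrow$(iii)$\Leftrightarrow$(iv) together with (iv)$\Rightarrow$(v)$\Rightarrow$(iii), and then to treat the ``moreover'' part separately. Throughout I would rely on Proposition~\ref{mem-ber-1}(i), so that $|Q|$ and $|E|$ are commuting positive operators, and on Proposition~\ref{babuc}(ii), which gives $T^{*n}T^n=\big[\begin{smallmatrix} I & 0\\ 0 & \varOmega_n\end{smallmatrix}\big]$ with $\varOmega_1=|Q|^2+|E|^2$ and $\varOmega_2=|E|^2(I+|Q|^2)+|Q|^4$.

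The chain of analytic equivalences would be obtained as follows. For (i)$\Leftrightarrow$(ii), the implication (i)$\Rightarrow$(ii) is immediate from the definition of a quasi-Brownian isometry, while for the converse I would invoke the well-known fact recalled in the Introduction (\cite[Lemma~1]{R-0}) that every $2$-isometry satisfies $\triangle_T\Ge 0$, i.e.\ is an expansion, and then apply Proposition~\ref{babuc}(v) to conclude that such a $T$ is $\triangle_T$-regular. For (ii)$\Leftrightarrow$(iii), a block computation via Proposition~\ref{babuc}(ii) shows that the $\hh_1$-block of $T^{*2}T^2-2T^*T+I$ vanishes while the $\hh_2$-block equals $\varOmega_2-2\varOmega_1+I$, which factors (since $|Q|$ and $|E|$ commute) as $(|Q|^2-I)(|Q|^2+|E|^2-I)$. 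For (iii)$\Leftrightarrow$(iv), I would apply Theorem~\ref{spmt}(ii) to $\psi(s,t)=(s^2-1)(s^2+t^2-1)$, so that $\psi(|Q|,|E|)=(|Q|^2-I)(|Q|^2+|E|^2-I)$ is selfadjoint with spectrum $\psi(\sigma(|Q|,|E|))$; as a selfadjoint operator vanishes iff its spectrum is $\{0\}$, condition (iii) is equivalent to $\psi\equiv 0$ on $\sigma(|Q|,|E|)\subseteq\rbb_+^2$, whose zero set in $\rbb_+^2$ is exactly $\tbb_+\cup(\{1\}\times\rbb_+)$, i.e.\ (iv).

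The structural step (iv)$\Rightarrow$(v)$\Rightarrow$(iii) would rest on the joint spectral measure $G$ of $(|Q|,|E|)$, for which $\supp G=\sigma(|Q|,|E|)$ by Theorem~\ref{spmt}(i). Since $\{1\}\times\rbb_+$ and $\tbb_+$ meet only at $(1,0)$, I would take the disjoint Borel sets $\varDelta_{\mathrm i}=\{1\}\times\rbb_+$ and $\varDelta_{\mathrm{si}}=\tbb_+\setminus\{(1,0)\}$ and set $\hh_{\mathrm i}=\ob{G(\varDelta_{\mathrm i})}$, $\hh_{\mathrm{si}}=\ob{G(\varDelta_{\mathrm{si}})}$; by (iv) these projections sum to $I$, so $\hh_2=\hh_{\mathrm i}\oplus\hh_{\mathrm{si}}$ is an orthogonal decomposition. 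The key observation is that these spectral subspaces reduce $Q$ and not merely $|Q|$: indeed $Q$ commutes with $|Q|$ by quasinormality~\eqref{gqb-4} and with $|E|$ by~\eqref{gqb-3} and the square root theorem, hence with every $G(\varDelta)$. On $\hh_{\mathrm i}$ one has $|Q|^2=I$, so $Q|_{\hh_{\mathrm i}}$ is an isometry, and on $\hh_{\mathrm{si}}$ one has $|Q|^2+|E|^2=I$, so $(Q|_{\hh_{\mathrm{si}}},|E||_{\hh_{\mathrm{si}}})$ is a spherical isometry, which is (v). The reverse implication (v)$\Rightarrow$(iii) is immediate, since $|Q|^2-I$ vanishes on $\hh_{\mathrm i}$ and $|Q|^2+|E|^2-I$ on $\hh_{\mathrm{si}}$.

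For the ``moreover'' part, I would set $W=Q|_{\hh_{\mathrm i}}$ and let $\hh_{\mathrm i}=\hh_{\mathrm u}\oplus\hh_{\mathrm s}$ be its von Neumann--Wold decomposition. Since $\hh_{\mathrm i}$ reduces both $Q$ and $|E|$, the restriction of $|E|$ to $\hh_{\mathrm i}$ is selfadjoint and commutes with $W$, hence also with $W^*$, and therefore with each orthogonal projection $W^nW^{*n}$ onto $\ob{W^n}$ and with their strong limit, the projection onto $\hh_{\mathrm u}=\bigcap_n\ob{W^n}$. Consequently $\hh_{\mathrm u}$, and with it $\hh_{\mathrm s}=\hh_{\mathrm i}\ominus\hh_{\mathrm u}$, reduces $|E|$; both reduce $Q$ by the Wold decomposition, with $Q|_{\hh_{\mathrm u}}$ unitary and $Q|_{\hh_{\mathrm s}}$ a unilateral shift. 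I expect the principal difficulty to lie in the (iv)$\Rightarrow$(v) step, namely in correctly splitting off the overlap point $(1,0)$ so as to secure a genuine orthogonal decomposition, and in verifying that the spectral subspaces reduce $Q$ itself rather than only $|Q|$.
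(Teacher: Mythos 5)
Your proposal is correct, and its analytic half coincides with the paper's proof: the paper also gets (i)$\Leftrightarrow$(ii) from \cite[Lemma~1]{R-0} plus Proposition~\ref{babuc}(v), (ii)$\Leftrightarrow$(iii) from Proposition~\ref{babuc}(ii) together with the factorization $\varOmega_2-2\varOmega_1+I=(|Q|^2-I)(|Q|^2+|E|^2-I)$ (licensed by \eqref{gqb-3}), and (iii)$\Leftrightarrow$(iv) from Theorem~\ref{spmt}(ii) applied to $\psi(s,t)=(s^2-1)(s^2+t^2-1)$ together with \eqref{font-1}. You genuinely diverge in the structural step: the paper proves (iii)$\Rightarrow$(v) directly and purely algebraically, with no spectral measure --- since $Q$ is quasinormal, $|Q|^2-I$ commutes with $Q$, so $\hh_{\mathrm{i}}:=\jd{|Q|^2-I}$ reduces $Q$ to an isometry, $\hh_{\mathrm{si}}:=\overline{\ob{|Q|^2-I}}$ reduces $Q$ and $|E|$, and multiplying (iii) out gives $(|Q|^2+|E|^2)(|Q|^2-I)=|Q|^2-I$, forcing $|Q|^2+|E|^2=I$ on $\hh_{\mathrm{si}}$ --- whereas you go (iv)$\Rightarrow$(v) through the joint spectral measure $G$, assigning the overlap point $(1,0)$ to $\varDelta_{\mathrm{i}}=\{1\}\times\rbb_+$. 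Your route is legitimate ($Q$ commutes with every $G(\varDelta)$ because it commutes with $|Q|$ and $|E|$, as you note) and in fact yields the same subspace, since $\ob{G(\{1\}\times\rbb_+)}=\jd{|Q|-I}=\jd{|Q|^2-I}$; the paper's version buys independence from the spectral apparatus, while yours stays uniform with the Taylor-spectrum theme and makes the geometric splitting of $\tbb_+\cup(\{1\}\times\rbb_+)$ explicit. In the ``moreover'' part the two arguments again differ but are both sound: the paper shows $|E|(\hh_{\mathrm{u}})\subseteq\hh_{\mathrm{u}}$ directly from $\hh_{\mathrm{u}}=\bigcap_{n\Ge 0}Q^n(\hh_{\mathrm{i}})$ and $Q|E|=|E|Q$, then passes to $\hh_{\mathrm{s}}$ by orthocomplementation, whereas you let $|E|$ commute with the decreasing projections $W^nW^{*n}$ and with their strong limit, the projection onto $\hh_{\mathrm{u}}$; your observation that selfadjointness of $|E|$ upgrades $|E|W=W|E|$ to $|E|W^*=W^*|E|$ is exactly the point that rescues this step, since for a non-unitary isometry the inference $AW=WA\Rightarrow AW^*=W^*A$ fails for general $A$.
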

   \begin{proof}
(i)$\Leftrightarrow$(ii) If $T$ is $2$-isometric, then
by \cite[Lemma~1]{R-0}, $T^*T \Ge I.$ This together
with Proposition \ref{babuc}(v) shows that (i) and
(ii) are equivalent.

(ii)$\Leftrightarrow$(iii) This equivalence is a
straightforward consequence of \eqref{gqb-3} and
Proposition~\ref{babuc}(ii).

(iii)$\Leftrightarrow$(iv) Apply Theorem~\ref{spmt}(ii) to
$\psi(x_1,x_2) = (x_1^2 - 1)(x_1^2 + x_2^2 -1)$ and
use~\eqref{font-1}.

(iii)$\Rightarrow$(v) Since $Q$ is quasinormal, $|Q|^2-I$
commutes with $Q$ and so $\hh_{\mathrm{i}}:=\jd{|Q|^2-I}$
reduces $Q$ to an isometry. Set $\hh_{\mathrm{si}}=\hh_2
\ominus \hh_{\mathrm{i}} = \overline{\ob{|Q|^2-I}}$.
Clearly, $\hh_{2} = \hh_{\mathrm{i}} \oplus
\hh_{\mathrm{si}}$ and $\hh_{\mathrm{si}}$ reduces $Q.$
Since $|Q|^2-I$ commutes with $|E|$, we see that
$\hh_{\mathrm{i}},$ and consequently $\hh_{\mathrm{si}},$
reduces $|E|$. Notice that
   \begin{align*}
(|Q|^2 + |E|^2)(|Q|^2-I) = (|Q|^2-I) (|Q|^2 + |E|^2)
\overset{\mathrm{(iii)}}= |Q|^2-I,
   \end{align*}
which implies that $|Q|^2 + |E|^2$ is the identity operator
on $\hh_{\mathrm{si}}$. This shows that
$\big(Q|_{\hh_{\mathrm{si}}},|E|\big|_{\hh_{\mathrm{si}}}\big)$
is a spherical isometry.

(v)$\Rightarrow$(iii) This implication is a matter of
routine verification.

We now prove the ``moreover'' part. Let
$\hh_{\mathrm{i}}=\hh_{\mathrm{u}} \oplus \hh_{\mathrm{s}}$
be the von Neumann-Wold decomposition of $\hh_{\mathrm{i}}$
for $Q|_{\hh_{\mathrm{i}}}.$ Since $\hh_{\mathrm{u}}$ and
$\hh_{\mathrm{s}}$ reduce $Q|_{\hh_{\mathrm{i}}}$ and
$\hh_{\mathrm{i}}$ reduces $Q,$ we deduce that
$\hh_{\mathrm{u}}$ and $\hh_{\mathrm{s}}$ reduce $Q,$ the
operator $Q|_{\hh_{\mathrm{u}}}$ is unitary and the
operator $Q|_{\hh_{\mathrm{s}}}$ is a unilateral shift
$($of finite of infinite multiplicity$).$ Because
$\hh_{\mathrm{u}} = \bigcap_{n=0}^{\infty}
Q^n(\hh_{\mathrm{i}}),$ $|E|(\hh_{\mathrm{i}}) \subseteq
\hh_{\mathrm{i}}$ and $Q$ commutes with $|E|,$ we see that
   \begin{align*}
|E|(\hh_{\mathrm{u}}) \subseteq \bigcap_{n=0}^{\infty}
Q^n|E|(\hh_{\mathrm{i}}) \subseteq \hh_{\mathrm{u}},
   \end{align*}
which implies that $\hh_{\mathrm{u}}$ reduces $|E|.$ Since
$\hh_{\mathrm{si}}$ also reduces $|E|,$ we conclude that
$\hh_{\mathrm{s}}$ reduces $|E|.$ This completes the proof.
   \end{proof}
Below we show that there are operators of class $\gqb$ with
injective $E,$ which are not $2$-isometries (the case when
$E=0$ is obvious due to the fact that quasinormal
$2$-isometries are isometric; see \cite[Theorem 1 in \S
2.6.2]{Furu} and \cite[Theorem~ 3.4]{Ja-St}).
   \begin{cor} \label{qnotquasi}
Suppose $T =  \big[\begin{smallmatrix} V & E \\
0 & Q \end{smallmatrix}\big] \in \gqbh,$ where $E$ is an
isometry. Then the following conditions are equivalent{\em
:}
   \begin{enumerate}
   \item[(i)] $T$ is a $2$-isometry,
   \item[(ii)] $Q=0\oplus U,$
where $U\in \ogr{\hh_2 \ominus \jd{Q}}$ is an isometry.
   \end{enumerate}
   \end{cor}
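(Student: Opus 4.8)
The plan is to reduce the whole statement to the algebraic characterization of $2$-isometries in Theorem~\ref{q2iso}. The first observation is that since $E$ is an isometry, $E^*E = I_{\hh_2}$, so $|E| = I_{\hh_2}$ and $|E|^2 = I$. Plugging this into the equivalence (ii)$\Leftrightarrow$(iii) of Theorem~\ref{q2iso}, condition (i) becomes $(|Q|^2 - I)|Q|^2 = 0$, that is, $|Q|^2$ is idempotent. Thus everything reduces to showing that, for a quasinormal $Q$, the identity $|Q|^2(|Q|^2 - I) = 0$ holds if and only if $Q = 0 \oplus U$ with $U \in \ogr{\hh_2 \ominus \jd{Q}}$ an isometry.

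Next I would show that $|Q|^2(|Q|^2 - I) = 0$ forces $|Q|$ to be an orthogonal projection. Since $|Q| \Ge 0$, applying \eqref{font-1} together with the spectral mapping theorem (Theorem~\ref{spmt}(ii)) to the map $x \mapsto x^2(x^2 - 1)$ shows that every $\lambda \in \sigma(|Q|)$ satisfies $\lambda^2(\lambda^2 - 1) = 0$ with $\lambda \Ge 0$, whence $\lambda \in \{0, 1\}$ and $|Q|$ is a projection. Because $\jd{|Q|} = \jd{Q}$ always holds, this projection is precisely the orthogonal projection of $\hh_2$ onto $\overline{\ob{|Q|}} = \hh_2 \ominus \jd{Q}$; in particular $|Q|$ acts as the identity on $\hh_2 \ominus \jd{Q}$ and as $0$ on $\jd{Q}$.

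For the implication (i)$\Rightarrow$(ii) I would invoke quasinormality: the relation $Q|Q| = |Q|Q$ says that the projection $|Q|$ commutes with $Q$, so both $\jd{Q}$ and $\hh_2 \ominus \jd{Q}$ reduce $Q$. On $\jd{Q}$ the operator $Q$ vanishes, and setting $U := Q\big|_{\hh_2 \ominus \jd{Q}}$ one computes $U^*U = (Q^*Q)\big|_{\hh_2 \ominus \jd{Q}} = (|Q|^2)\big|_{\hh_2 \ominus \jd{Q}} = I$, so $U$ is an isometry and $Q = 0 \oplus U$, which is (ii). The converse (ii)$\Rightarrow$(i) is immediate: if $Q = 0 \oplus U$ with $U$ isometric, then $|Q|$ is the orthogonal projection onto $\hh_2 \ominus \jd{Q}$, so $|Q|^2(|Q|^2 - I) = 0$, and together with $|E|^2 = I$ this is condition (iii) of Theorem~\ref{q2iso}, whence $T$ is a $2$-isometry.

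I do not expect a genuine obstacle here; the argument is a short application of Theorem~\ref{q2iso} and the quasinormality identity. The only steps needing a little care are the passage from ``$|Q|^2$ is idempotent'' to ``$|Q|$ is a projection'' (handled by the spectral mapping theorem, using $|Q| \Ge 0$) and the bookkeeping that the range of this projection is exactly $\hh_2 \ominus \jd{Q}$, which is what makes $Q\big|_{\hh_2 \ominus \jd{Q}}$ isometric.
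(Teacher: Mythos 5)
Your proof is correct, and its skeleton coincides with the paper's: both arguments reduce the statement, via the equivalence (ii)$\Leftrightarrow$(iii) of Theorem~\ref{q2iso} together with $|E|=I_{\hh_2}$, to the algebraic condition $(Q^*Q)^2=Q^*Q$ for the quasinormal entry $Q$. The difference lies in how that condition is converted into the decomposition $Q=0\oplus U$. The paper dispatches this step by citing two facts from Halmos's problem book: $(Q^*Q)^2=Q^*Q$ if and only if $Q$ is a partial isometry (Problem~127), and a quasinormal partial isometry is the orthogonal sum of a zero operator and an isometry (Problem~204). You instead prove exactly these two facts from scratch: the spectral mapping theorem forces $\sigma(|Q|)\subseteq\{0,1\}$, so $|Q|$ is the orthogonal projection onto $\hh_2\ominus\jd{Q}$, and the quasinormality relation $Q|Q|=|Q|Q$ makes $\jd{Q}$ and $\hh_2\ominus\jd{Q}$ reduce $Q$, the restriction to the latter being an isometry since $Q^*Q$ acts there as the identity. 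So your argument is self-contained where the paper's is citation-based, at the cost of a few extra lines; the underlying mathematics is the same. One cosmetic point: Theorem~\ref{spmt}(ii) is stated for pairs of commuting selfadjoint operators, so for the single operator $|Q|$ you should either invoke the classical spectral mapping theorem for selfadjoint operators or apply Theorem~\ref{spmt}(ii) to the pair $(|Q|,0)$; this does not affect the validity of the argument.
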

   \begin{proof}
In view of the equivalence of (ii) and (iii) in Theorem~
\ref{q2iso}, $T$ is a $2$-isometry if and only if
$(Q^*Q)^2=Q^*Q$. Hence, by \cite[Problem~ 127]{Hal}, $T$ is
a $2$-isometry if and only if $Q$ is a partial isometry.
Since $Q$ is quasinormal, we infer from \cite[Problem~
204]{Hal} that $Q$ is a partial isometry if and only if
$Q=0\oplus U,$ where $U\in \ogr{\hh_2 \ominus \jd{Q}}$ is
an isometry.
   \end{proof}
Taking any quasinormal operator $Q$ which is not of the
form as in the condition (ii) of Corollary~\ref{qnotquasi}
(e.g., when $\|Q\|\notin \{0,1\}$), we get an operator of
class $\gqb$ which is not a $2$-isometry.

The key role which plays the Taylor spectrum
$\sigma(|Q|,|E|)$ in the present paper raises the
question of the existence of different orthogonal
decompositions of the underlying Hilbert space $\hh$
relative to which a given operator $T\in \ogr{\hh}$ is
of class $\gqb,$ i.e., $T$ has the block matrix form
\eqref{brep} with $V,$ $E$ and $Q$ satisfying
\eqref{gqb-1}-\eqref{gqb-4}. This question is
discussed in the following example.
   \begin{exa} \label{wid-dmo-1}
Set $Y=\tbb_+ \cup \big(\{1\} \times \rbb_+\big).$ Let
$\varGamma$ be any nonempty compact subset of $Y$ such that
   \begin{align} \label{Wik-1}
\text{$\varGamma \cap \big(\{1\} \times (0,\infty)\big)\neq
\emptyset.$}
   \end{align}
Set $\alpha = \max\{t\in \rbb_+\colon (1,t) \in
\varGamma\}.$ By \eqref{Wik-1}, $\alpha > 0.$ It follows
from Theorem~\ref{bielan-2} that there exists $T =
\big[\begin{smallmatrix} V & E \\
0 & Q \end{smallmatrix}\big] \in \gqbh$ such that
   \begin{align}  \label{kaw-a1}
\sigma(|Q|,|E|) = \varGamma.
   \end{align}
Since $\varGamma \subseteq Y,$ we infer from
Theorem~\ref{q2iso} that $T$ is a quasi-Brownian isometry.
According to \eqref{Wik-1} and Proposition~\ref{kontr-2},
$T$ is not an isometry. Thus using
\cite[Proposition~5.1]{Maj},
we see that  $T =  \big[\begin{smallmatrix} \tilde V & \tilde E \\
0 & \tilde Q \end{smallmatrix}\big] \in
\EuScript{Q}_{\tilde \hh_1,\tilde \hh_2}$ relative to an
orthogonal decomposition $\hh=\tilde \hh_1 \oplus \tilde
\hh_2,$ where $\tilde Q$ is an isometry. Consequently,
   \begin{align} \label{kaw-a2}
\sigma(|\tilde Q|,|\tilde E|) = \sigma(I_{\tilde
\hh_2},|\tilde E|) \overset{\eqref{tplus0}}= \{1\} \times
\sigma(|\tilde E|).
   \end{align}
In view of \eqref{Wik-1}, \eqref{kaw-a1} and
Proposition~\ref{ogr-1} (see also Remark~\ref{Wik-2}), we
have
   \begin{align*}
\|T\| = r(|Q|,|E|) = r(|\tilde Q|,|\tilde E|) = \sqrt{1 +
\alpha^2},
   \end{align*}
where
   \begin{align} \label{sra-1}
\alpha = \max\{t\in \rbb_+\colon & (1,t) \in
\sigma(|Q|,|E|)\}.
   \end{align}
This, together with \eqref{kaw-a2}, implies that
   \begin{align} \label{trab-a}
\alpha=\max \sigma(|\tilde E|) = \||\tilde E|\| = \|\tilde
E\|.
   \end{align}
Since by \eqref{sra-1}, $(1,\alpha)\in \sigma(|Q|,|E|),$ we
infer from \eqref{pro-pr-y} that $\alpha\in \sigma(|E|).$
Consequently,
   \begin{align} \label{zen-rig}
\|E\|=\||E|\|=\max \sigma(|E|) \Ge \alpha
\overset{\eqref{trab-a}} = \|\tilde E\|.
   \end{align}

We now consider two important cases. First, if $\tbb_+
\subseteq \varGamma,$ then by \eqref{kaw-a1} and
\eqref{kaw-a2} we obtain the two block matrix
representations of $T$, namely
   \begin{align*}
\text{$T = \begin{bmatrix} V & E \\
0 & Q \end{bmatrix} \in \gqbh$ relative to $\hh=\hh_1
\oplus \hh_2$}
   \end{align*}
and
   \begin{align*}
\text{$T = \begin{bmatrix} \tilde V & \tilde E \\
0 & \tilde Q \end{bmatrix} \in \EuScript{Q}_{\tilde
\hh_1,\tilde \hh_2}$ relative to $\hh=\tilde \hh_1 \oplus
\tilde \hh_2$,}
   \end{align*}
such that
   \begin{align*}
Z \subseteq \sigma(|Q|,|E|) \quad \text{and} \quad Z \cap
\sigma(|\tilde Q|,|\tilde E|) = \emptyset,
   \end{align*}
where $Z:=\tbb_+ \setminus \{(1,0)\}.$ Second, if $\alpha >
1,$ then using \eqref{zen-rig}, the inclusion $\varGamma
\subseteq Y$ and \eqref{pro-pr-y}, we deduce that $\alpha
=\max \sigma(|E|);$ hence $\alpha = \||E|\| = \|E\|$ which
by \eqref{trab-a} yields $\alpha=\|E\|=\|\tilde E\|.$
   \hfill{$\diamondsuit$}
   \end{exa}
   \section{\label{Sec7.5}Brownian isometries of class $\gqb$}
The aim of this section is to give a deeper insight into
the structure of Brownian isometries of class $\gqb$. We
begin by proving two preparatory lemmata which are of some
independent interest.
   \begin{lem} \label{snia-pr-C}
Suppose $T =  \big[\begin{smallmatrix} V & E \\
0 & Q \end{smallmatrix}\big] \in \gqbh.$ Then the operators
$|Q|$, $|E|$ and $|Q^*|$ commute and the following
conditions are equivalent{\em :}
   \begin{enumerate}
   \item[(i)] $T$ is a Brownian isometry,
   \item[(ii)] $(|Q|^2-I)(|Q|^2 + |E|^2 - I)=0$ and
$(|Q^*|^2-I)(|Q|^2 + |E|^2 - I)^2=0.$
   \end{enumerate}
   \end{lem}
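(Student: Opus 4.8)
The plan is to turn both assertions into statements about the commuting family of positive operators $|Q|,|E|,|Q^*|$ acting on $\hh_2$, after which the Brownian condition $\triangle_T\triangle_{T^*}\triangle_T=0$ and the $2$-isometry condition become purely algebraic identities in that family. First I would settle the commutativity claim. By Proposition~\ref{mem-ber-1}(i) the operators $|Q|$ and $|E|$ already commute, so only $|Q^*|$ needs to be pulled in. Since $Q$ is quasinormal, $Q$ commutes with $Q^*Q=|Q|^2$, and taking adjoints so does $Q^*$; hence
$QQ^*|Q|^2 = Q(Q^*|Q|^2) = Q(|Q|^2Q^*) = (Q|Q|^2)Q^* = (|Q|^2Q)Q^* = |Q|^2 QQ^*$,
so $|Q^*|^2=QQ^*$ commutes with $|Q|^2$, and the square root theorem gives that $|Q^*|$ commutes with $|Q|$. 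In the same spirit, condition \eqref{gqb-3} (equivalently $Q|E|=|E|Q$) makes both $Q$ and $Q^*$ commute with $|E|$, whence $QQ^*|E| = Q(|E|Q^*) = (|E|Q)Q^* = |E|QQ^*$, and another appeal to the square root theorem yields that $|Q^*|$ commutes with $|E|$. This proves that $|Q|,|E|,|Q^*|$ mutually commute.

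Next I would write out the two defect operators in block form. By Proposition~\ref{babuc}(ii) with $n=1$ we have $T^*T=\big[\begin{smallmatrix} I & 0\\ 0 & \varOmega_1\end{smallmatrix}\big]$ with $\varOmega_1=|Q|^2+|E|^2$, so setting $D:=|Q|^2+|E|^2-I$ we obtain $\triangle_T=\big[\begin{smallmatrix} 0 & 0\\ 0 & D\end{smallmatrix}\big]$. A direct multiplication gives $TT^*=\big[\begin{smallmatrix} VV^*+EE^* & EQ^*\\ QE^* & QQ^*\end{smallmatrix}\big]$, so the $(2,2)$-entry of $\triangle_{T^*}=TT^*-I$ is $QQ^*-I=|Q^*|^2-I$. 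Because $\triangle_T$ is supported on the $\hh_2$-corner, only this entry survives the sandwiching, and I get $\triangle_T\triangle_{T^*}\triangle_T=\big[\begin{smallmatrix} 0 & 0\\ 0 & D(|Q^*|^2-I)D\end{smallmatrix}\big]$.

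Now the commutativity from the first step is exactly what is needed: since $D$ is built from $|Q|^2,|E|^2$ it commutes with $|Q^*|^2-I$, so $D(|Q^*|^2-I)D=(|Q^*|^2-I)D^2$. Consequently $\triangle_T\triangle_{T^*}\triangle_T=0$ if and only if $(|Q^*|^2-I)(|Q|^2+|E|^2-I)^2=0$, which is the second identity in (ii). Finally I would recall that, by definition, $T$ is a Brownian isometry exactly when $T$ is a $2$-isometry and $\triangle_T\triangle_{T^*}\triangle_T=0$; and by the equivalence (ii)$\Leftrightarrow$(iii) of Theorem~\ref{q2iso} the $2$-isometry condition is precisely $(|Q|^2-I)(|Q|^2+|E|^2-I)=0$, the first identity in (ii). Combining the two equivalences yields (i)$\Leftrightarrow$(ii).

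The block computations are routine. The one place demanding genuine care is the commutativity step: both the quasinormality of $Q$ and the constraint \eqref{gqb-3} are essential to force $|Q^*|$ into the commuting family, and without that the reduction of $D(|Q^*|^2-I)D=0$ to the factored identity $(|Q^*|^2-I)D^2=0$ would break down. I expect this to be the main (if modest) obstacle.
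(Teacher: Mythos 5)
Your proposal is correct and follows essentially the same route as the paper: commutativity of $|Q|$, $|E|$, $|Q^*|$ via quasinormality, \eqref{gqb-3} and the square root theorem; the block computation showing $\triangle_T\triangle_{T^*}\triangle_T$ has only the $(2,2)$-entry $(\varOmega_1-I)(|Q^*|^2-I)(\varOmega_1-I)=(|Q^*|^2-I)(\varOmega_1-I)^2$; and the reduction of the $2$-isometry part to the first identity in (ii) via the equivalence (ii)$\Leftrightarrow$(iii) of Theorem~\ref{q2iso}. You merely fill in the commutativity and matrix details that the paper dismisses as routine.
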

   \begin{proof}
That the operators $|Q|$, $|E|$ and $|Q^*|$ commute can be
deduced from \eqref{gqb-3} and \eqref{gqb-4} via the square
root theorem (cf.\ Proposition~\ref{mem-ber-1}(i)). Hence,
by the equivalence (ii)$\Leftrightarrow$(iii) of Theorem
\ref{q2iso}, it suffices to show that $\triangle_T
\triangle_{T^*} \triangle_T=0$ if and only if
$(|Q^*|^2-I)(|Q|^2 + |E|^2 - I)^2=0.$ It is a routine
matter to verify that \allowdisplaybreaks
   \begin{align*}
\triangle_T \triangle_{T^*} \triangle_T &=
\left[\begin{array}{cc} 0 & 0 \\[1ex] 0 & (\varOmega_1 - I)
(|Q^*|^2-I)(\varOmega_1 - I)
\end{array}\right]
   \\
&= \left[\begin{array}{cc} 0 & 0 \\[1ex] 0 &
(|Q^*|^2-I)(\varOmega_1 - I)^2
\end{array}\right],
   \end{align*}
where $\varOmega_1=|Q|^2 + |E|^2.$ As a consequence, we get
the desired equivalence.
   \end{proof}
   \begin{lem} \label{q2iso-N}
Suppose $T =  \big[\begin{smallmatrix} V & E \\
0 & Q \end{smallmatrix}\big] \in \gqbh$ is a quasi-Brownian
isometry. Let $\hh_2=\hh_{\mathrm{i}} \oplus
\hh_{\mathrm{si}}$ be an orthogonal decomposition of
$\hh_2$ $($zero summands are allowed\/$)$ satisfying the
conditions {\em (a)} and {\em (b)} of Theorem~{\em
\ref{q2iso}} and let $\hh_{\mathrm{i}}=\hh_{\mathrm{u}}
\oplus \hh_{\mathrm{s}}$ be the von Neumann-Wold
decomposition of $\hh_{\mathrm{i}}$ for
$Q|_{\hh_{\mathrm{i}}}.$ Then $T$ is a Brownian isometry if
and only if $|E|\big|_{\hh_{\mathrm{s}}}= 0.$
   \end{lem}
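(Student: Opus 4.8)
The plan is to use the structure theory already established in Theorem~\ref{q2iso} and its ``moreover'' part, together with the criterion from Lemma~\ref{snia-pr-C}. Since $T$ is assumed to be a quasi-Brownian isometry, we already know from the equivalence (ii)$\Leftrightarrow$(iii) of Theorem~\ref{q2iso} that $(|Q|^2-I)(|Q|^2 + |E|^2 - I)=0$ holds. Hence by Lemma~\ref{snia-pr-C}, $T$ is a Brownian isometry if and only if the \emph{second} condition there holds, namely
\begin{align*}
(|Q^*|^2-I)(|Q|^2 + |E|^2 - I)^2=0.
\end{align*}
So the whole proof reduces to showing that this single operator equation is equivalent to $|E|\big|_{\hh_{\mathrm{s}}}=0$. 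First I would record that, by the decomposition $\hh_2 = \hh_{\mathrm{u}} \oplus \hh_{\mathrm{s}} \oplus \hh_{\mathrm{si}}$ coming from the ``moreover'' parts of Theorem~\ref{q2iso} and the hypotheses of the present lemma, each of the three summands reduces both $Q$ and $|E|$, and hence reduces $|Q|$, $|Q^*|$ and $\varOmega_1 - I = |Q|^2 + |E|^2 - I$. This lets me analyze the equation summand by summand.

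\textbf{Analyzing each summand.} On $\hh_{\mathrm{u}}$ the operator $Q|_{\hh_{\mathrm{u}}}$ is unitary, so $|Q^*|^2 = I$ there and the first factor $|Q^*|^2 - I$ vanishes; the equation is automatically satisfied on $\hh_{\mathrm{u}}$ regardless of $|E|$. On $\hh_{\mathrm{si}}$ the pair $\big(Q|_{\hh_{\mathrm{si}}},|E|\big|_{\hh_{\mathrm{si}}}\big)$ is a spherical isometry, so $\varOmega_1 - I = 0$ there, the second factor vanishes, and again the equation holds unconditionally. Thus the equation is nontrivial only on $\hh_{\mathrm{s}}$. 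On this summand $Q|_{\hh_{\mathrm{s}}}$ is a unilateral shift, so $|Q|=I$ (an isometry) while $|Q^*|$ is the projection onto the range of the shift; consequently $|Q^*|^2 - I = -(I - |Q^*|^2)$ is minus the projection onto $\jd{(Q|_{\hh_{\mathrm{s}}})^*}$, which is a nonzero projection. Moreover, since $|Q|=I$ on $\hh_{\mathrm{s}}$, we have $\varOmega_1 - I = |E|^2$ there, so the equation restricted to $\hh_{\mathrm{s}}$ becomes
\begin{align*}
(|Q^*|^2 - I)\,|E|^4 = 0 \quad\text{on } \hh_{\mathrm{s}}.
\end{align*}

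\textbf{Finishing the equivalence.} It remains to show that $(|Q^*|^2 - I)|E|^4 = 0$ on $\hh_{\mathrm{s}}$ forces $|E|\big|_{\hh_{\mathrm{s}}}=0$, the converse being trivial. Here I would exploit that $Q|_{\hh_{\mathrm{s}}}$ is a unilateral shift commuting with $|E|$. Writing $\hh_{\mathrm{s}} = \bigoplus_{n=0}^{\infty} Q^n \jd{(Q|_{\hh_{\mathrm{s}}})^*}$ as in \eqref{sam-il-1}, the projection $I - |Q^*|^2$ is exactly the projection onto the $n=0$ component $\jd{(Q|_{\hh_{\mathrm{s}}})^*}$. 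The equation says $|E|^4$ maps $\hh_{\mathrm{s}}$ into $\ob{Q|_{\hh_{\mathrm{s}}}}$, i.e.\ kills the $n=0$ wandering component. Because $|E|$ commutes with the shift, if $|E|$ were nonzero on $\hh_{\mathrm{s}}$ it would have to act nontrivially on the wandering subspace as well (a shift-commuting positive operator that annihilates the wandering subspace annihilates every translate $Q^n\jd{(Q|_{\hh_{\mathrm{s}}})^*}$ of it, hence all of $\hh_{\mathrm{s}}$); I would make this precise by noting that $|E|$ commutes with $|Q^*|^2$, so $|E|^4$ leaves the wandering subspace invariant, and the vanishing of $(I-|Q^*|^2)|E|^4$ means $|E|^4$ is zero on that subspace, whence $|E|$ is zero there, and then zero on all of $\hh_{\mathrm{s}}$. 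The main obstacle, and the step deserving the most care, is precisely this last reduction: correctly using the commutation of $|E|$ with the shift structure to propagate vanishing on the wandering subspace to the whole of $\hh_{\mathrm{s}}$, rather than merely concluding that the range of $|E|^4$ avoids one fixed wandering subspace. Once that is handled, combining the three summands yields that the full equation $(|Q^*|^2-I)(\varOmega_1 - I)^2 = 0$ holds if and only if $|E|\big|_{\hh_{\mathrm{s}}}=0$, which by Lemma~\ref{snia-pr-C} completes the proof.
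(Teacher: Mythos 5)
Your proposal is correct and follows essentially the same route as the paper: both reduce the problem via Lemma~\ref{snia-pr-C} to the single equation $(|Q^*|^2-I)(|Q|^2+|E|^2-I)^2=0$, localize it to the shift part $\hh_{\mathrm{s}}$ where it becomes $P A_{\mathrm{s}}^4=0$ for $P$ the projection onto the wandering subspace, kill $A_{\mathrm{s}}$ on that subspace (you via commutation plus positivity, the paper via selfadjointness of $PA_{\mathrm{s}}$ and $(PA_{\mathrm{s}})^4=0$ --- an immaterial difference), and then propagate the vanishing along $\hh_{\mathrm{s}}=\bigoplus_{n}Q^n\jd{(Q|_{\hh_{\mathrm{s}}})^*}$ using that $|E|$ commutes with $Q$. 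Your explicit summand-by-summand check of $\hh_{\mathrm{u}}$ and $\hh_{\mathrm{si}}$ just makes transparent the converse direction that the paper dismisses as straightforward calculation.
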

   \begin{proof}
Suppose $T$ is a Brownian isometry. Then by
Lemma~\ref{snia-pr-C}, we have
   \begin{align} \label{wod-a-1}
0 = (I-|Q^*|^2)(|Q|^2 + |E|^2 -
I)^2|_{\hh_{\mathrm{s}}} = P A_{\mathrm{s}}^4 = (P
A_{\mathrm{s}})^4,
   \end{align}
where $A_s:= |E|\big|_{\hh_{\mathrm{s}}}$ and $P\in
\ogr{\hh_{\mathrm{s}}}$ is the orthogonal projection
of $\hh_{\mathrm{s}}$ onto $\jd{Q_{\mathrm{s}}^*}$
with $Q_{\mathrm{s}}:=Q|_{\hh_{\mathrm{s}}}$ (by the
moreover part of Theorem~\ref{q2iso},
$\hh_{\mathrm{s}}$ reduces $Q$ and $|E|$). Because $(P
A_{\mathrm{s}})^*=P A_{\mathrm{s}}$, we infer from
\eqref{wod-a-1} that $P A_{\mathrm{s}}=0.$ As a
consequence, we see that $\ob{A_{\mathrm{s}}}
\subseteq \jd{Q_{\mathrm{s}}^*}^{\perp}$. Since
$Q_{\mathrm{s}}$ commutes with $A_{\mathrm{s}},$ so
does $Q_{\mathrm{s}}^*$ and consequently
$A_{\mathrm{s}}(\jd{Q_{\mathrm{s}}^*}) \subseteq
\jd{Q_{\mathrm{s}}^*}.$ Putting all of this together,
we see that
$A_{\mathrm{s}}(\jd{Q_{\mathrm{s}}^*})=\{0\}.$
Therefore, because $Q$ commutes with $|E|,$ we deduce
that $|E|Q^n \jd{Q_{\mathrm{s}}^*} = \{0\}$ for all
$n\in \zbb_+.$ Since by \eqref{sam-il-1},
$\hh_{\mathrm{s}} = \bigoplus_{n=0}^{\infty}
Q^n\jd{Q_{\mathrm{s}}^*}$, we conclude that
$|E|\big|_{\hh_{\mathrm{s}}}=0.$

To prove the converse implication, assume that
$|E|\big|_{\hh_{\mathrm{s}}}=0.$ It follows from Theorem~
\ref{q2iso} that $\hh_2 =\hh_{\mathrm{u}} \oplus
\hh_{\mathrm{s}} \oplus \hh_{\mathrm{si}}$, the spaces
$\hh_{\mathrm{u}},$ $\hh_{\mathrm{s}}$ and
$\hh_{\mathrm{si}}$ reduce both $Q$ and $|E|,$
$Q|_{\hh_{\mathrm{u}}}$ is unitary, $Q|_{\hh_{\mathrm{s}}}$
is a unilateral shift and $\big(Q|_{\hh_{\mathrm{si}}},|E|
\big|_{\hh_{\mathrm{si}}}\big)$ is a spherical isometry.
Now, straightforward calculations show that the condition
(ii) of Lemma~\ref{snia-pr-C} holds. Hence by this lemma,
$T$ is a Brownian isometry. This completes the proof.
   \end{proof}
   We are now ready to characterize Brownian isometries of
class $\gqb.$
   \begin{thm} \label{snia-pr}
Suppose $T =  \big[\begin{smallmatrix} V & E \\
0 & Q \end{smallmatrix}\big] \in \gqbh.$ Then the following
conditions are equivalent{\em :}
   \begin{enumerate}
   \item[(i)] $T$ is a Brownian isometry,
   \item[(ii)] $(|Q|^2-I)(|Q|^2 + |E|^2 - I)=0$ and
$(|Q^*|^2-I)(|Q|^2 + |E|^2 - I)=0,$
   \item[(iii)] there exists an orthogonal decomposition
$\hh_2=\hh_{\mathrm{u}} \oplus \hh_{\mathrm{s}} \oplus
\hh_{\mathrm{si}}$ $($zero summands are allowed\/$)$ such
that
   \begin{enumerate}
   \item[(a)] $\hh_{\mathrm{u}},$ $\hh_{\mathrm{s}}$ and
$\hh_{\mathrm{si}}$ reduce both $Q$ and $|E|,$
   \item[(b)] $Q|_{\hh_{\mathrm{u}}}$ is a unitary operator and
$Q|_{\hh_{\mathrm{s}}}$ is a unilateral shift $($of finite
or infinite multiplicity$),$
   \item[(c)] $\big(Q|_{\hh_{\mathrm{si}}},|E|
\big|_{\hh_{\mathrm{si}}}\big)$ is a spherical isometry,
   \item[(d)] $|E|\big|_{\hh_{\mathrm{s}}}=0.$
   \end{enumerate}
   \end{enumerate}
   \end{thm}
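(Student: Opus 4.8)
The plan is to establish the cycle (i) $\Rightarrow$ (iii) $\Rightarrow$ (ii) $\Rightarrow$ (i), leaning on the two preparatory lemmata of this section together with the quasi-Brownian characterization in Theorem~\ref{q2iso}. Since every Brownian isometry is in particular a $2$-isometry, hence a quasi-Brownian isometry, the bulk of the structural information will come from Theorem~\ref{q2iso} and its ``moreover'' part, while the extra rigidity that distinguishes the Brownian case will be supplied by Lemma~\ref{q2iso-N}.

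For (i) $\Rightarrow$ (iii), I would argue as follows. As $T$ is a Brownian isometry it is a quasi-Brownian isometry, so Theorem~\ref{q2iso} produces an orthogonal decomposition $\hh_2 = \hh_{\mathrm{i}} \oplus \hh_{\mathrm{si}}$ satisfying conditions (a) and (b) there, and its ``moreover'' part refines $\hh_{\mathrm{i}}$ by the von Neumann-Wold decomposition $\hh_{\mathrm{i}} = \hh_{\mathrm{u}} \oplus \hh_{\mathrm{s}}$ for $Q|_{\hh_{\mathrm{i}}}$, where $\hh_{\mathrm{u}}$ and $\hh_{\mathrm{s}}$ reduce both $Q$ and $|E|$, the operator $Q|_{\hh_{\mathrm{u}}}$ is unitary, and $Q|_{\hh_{\mathrm{s}}}$ is a unilateral shift. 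This already delivers (a), (b) and (c). Finally, applying Lemma~\ref{q2iso-N} to these very subspaces yields $|E|\big|_{\hh_{\mathrm{s}}} = 0$, which is exactly (d).

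For (iii) $\Rightarrow$ (ii), I would verify the two operator identities summand by summand, using that $\hh_{\mathrm{u}}$, $\hh_{\mathrm{s}}$ and $\hh_{\mathrm{si}}$ reduce all operators involved. On $\hh_{\mathrm{u}}$ both $|Q|^2 - I$ and $|Q^*|^2 - I$ vanish (unitarity); on $\hh_{\mathrm{s}}$ the operator $|Q|^2 + |E|^2 - I$ vanishes, since $Q|_{\hh_{\mathrm{s}}}$ is an isometry and $|E|\big|_{\hh_{\mathrm{s}}} = 0$; and on $\hh_{\mathrm{si}}$ it again vanishes by the spherical isometry condition. Hence both products in (ii) are zero on each reducing summand, and therefore zero. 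The implication (ii) $\Rightarrow$ (i) is then immediate from Lemma~\ref{snia-pr-C}: writing $D = |Q|^2 + |E|^2 - I$ and recalling from that lemma that $|Q|$, $|E|$ and $|Q^*|$ commute (so $|Q^*|^2$ commutes with $D$), the second identity of (ii) gives $(|Q^*|^2 - I)D^2 = D(|Q^*|^2 - I)D = 0$; combined with the first identity of (ii), this is precisely condition (ii) of Lemma~\ref{snia-pr-C}, whence $T$ is a Brownian isometry.

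The principal subtlety lies in (i) $\Rightarrow$ (iii), namely in isolating the condition $|E|\big|_{\hh_{\mathrm{s}}} = 0$ that separates genuine Brownian isometries from merely quasi-Brownian ones. That deduction is exactly the content of Lemma~\ref{q2iso-N}, so with that lemma in hand the argument is essentially an assembly; the only point requiring care is to ensure that the von Neumann-Wold subspaces furnished by Theorem~\ref{q2iso} reduce $|E|$ as well as $Q$, which is guaranteed by its ``moreover'' statement. The remaining verifications are routine summand-wise computations and do not present any real difficulty.
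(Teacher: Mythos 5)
Your proposal is correct and follows essentially the same route as the paper: the cycle (i)$\Rightarrow$(iii)$\Rightarrow$(ii)$\Rightarrow$(i), with (i)$\Rightarrow$(iii) obtained from Theorem~\ref{q2iso}, its ``moreover'' part and Lemma~\ref{q2iso-N}, the implication (iii)$\Rightarrow$(ii) by summand-wise computation, and (ii)$\Rightarrow$(i) from Lemma~\ref{snia-pr-C}. The only difference is that you write out the routine verifications the paper leaves to the reader, and you do so correctly.
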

   \begin{proof}
(i)$\Rightarrow$(iii) Since any Brownian isometry is a
quasi-Brownian isometry, it follows from
Theorem~\ref{q2iso} that there exists an orthogonal
decomposition $\hh_2=\hh_{\mathrm{i}} \oplus
\hh_{\mathrm{si}}$ (zero summands are allowed) satisfying
the conditions (a) and (b) of Theorem~\ref{q2iso}(v). Let
$\hh_{\mathrm{i}}=\hh_{\mathrm{u}} \oplus \hh_{\mathrm{s}}$
be the von Neumann-Wold decomposition of $\hh_{\mathrm{i}}$
for $Q|_{\hh_{\mathrm{i}}}.$ By the moreover part of
Theorem~\ref{q2iso}, the orthogonal decomposition
$\hh_2=\hh_{\mathrm{u}} \oplus \hh_{\mathrm{s}} \oplus
\hh_{\mathrm{si}}$ satisfies the conditions (a), (b) and
(c). Applying Lemma~ \ref{q2iso-N}, we conclude that (d)
holds.

(iii)$\Rightarrow$(ii) This can be shown by straightforward
calculations.

(ii)$\Rightarrow$(i) This implication is a direct
consequence of Lemma~\ref{snia-pr-C}.
   \end{proof}
The following corollary is a consequence of
Theorem~\ref{q2iso}, Lemma~\ref{q2iso-N} and the uniqueness
part of \cite[Theorem~1.1]{SF70} (see also the proof of
Theorem~\ref{snia-pr}).
   \begin{cor} \label{neander}
Suppose $T =  \big[\begin{smallmatrix} V & E \\
0 & Q \end{smallmatrix}\big] \in \gqbh.$ Then the following
conditions are equivalent{\em :}
   \begin{enumerate}
   \item[(i)] $T$ is a quasi-Brownian isometry which is
not a Brownian isometry,
   \item[(ii)] there exists an orthogonal decomposition
$\hh_2=\hh_{\mathrm{u}} \oplus \hh_{\mathrm{s}} \oplus
\hh_{\mathrm{si}}$ $($zero summands are allowed\/$)$ such
that
   \begin{enumerate}
   \item[(a)] $\hh_{\mathrm{u}},$ $\hh_{\mathrm{s}}$ and
$\hh_{\mathrm{si}}$ reduce both $Q$ and $|E|,$
   \item[(b)] $Q|_{\hh_{\mathrm{u}}}$ is a unitary operator and
$Q|_{\hh_{\mathrm{s}}}$ is a unilateral shift $($of finite
or infinite multiplicity$),$
   \item[(c)] $\big(Q|_{\hh_{\mathrm{si}}},|E|
\big|_{\hh_{\mathrm{si}}}\big)$ is a spherical isometry,
   \item[(d)] $|E|\big|_{\hh_{\mathrm{s}}}\neq 0.$
   \end{enumerate}
   \end{enumerate}
   \end{cor}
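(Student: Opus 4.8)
The plan is to mirror the proof of Theorem~\ref{snia-pr}, exploiting the fact that the only difference between Corollary~\ref{neander}(ii) and Theorem~\ref{snia-pr}(iii) is that condition (d) has been flipped from $|E|\big|_{\hh_{\mathrm{s}}}=0$ to $|E|\big|_{\hh_{\mathrm{s}}}\neq 0$. Since a Brownian isometry is automatically a quasi-Brownian isometry, the dichotomy ``quasi-Brownian but not Brownian'' versus ``Brownian'' is controlled, by Lemma~\ref{q2iso-N}, precisely by the vanishing or non-vanishing of $|E|\big|_{\hh_{\mathrm{s}}}$. Thus the whole argument reduces to assembling the structural decomposition of Theorem~\ref{q2iso} and then reading off the Brownian/non-Brownian alternative from Lemma~\ref{q2iso-N}.

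For the implication (i)$\Rightarrow$(ii) I would argue verbatim as in the proof of (i)$\Rightarrow$(iii) of Theorem~\ref{snia-pr}. Since $T$ is a quasi-Brownian isometry, Theorem~\ref{q2iso} supplies an orthogonal decomposition $\hh_2=\hh_{\mathrm{i}}\oplus\hh_{\mathrm{si}}$ satisfying conditions (a) and (b) of Theorem~\ref{q2iso}(v); taking the von Neumann--Wold decomposition $\hh_{\mathrm{i}}=\hh_{\mathrm{u}}\oplus\hh_{\mathrm{s}}$ of $\hh_{\mathrm{i}}$ for $Q|_{\hh_{\mathrm{i}}}$ and invoking the ``moreover'' part of Theorem~\ref{q2iso} produces the three-fold decomposition $\hh_2=\hh_{\mathrm{u}}\oplus\hh_{\mathrm{s}}\oplus\hh_{\mathrm{si}}$ fulfilling (a), (b), (c). Lemma~\ref{q2iso-N} applies to exactly this decomposition, and because $T$ is assumed not to be a Brownian isometry, it forces $|E|\big|_{\hh_{\mathrm{s}}}\neq 0$, which is (d).

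The implication (ii)$\Rightarrow$(i) is where the uniqueness of the von Neumann--Wold decomposition enters, and I expect this to be the (only, and modest) obstacle. Given a decomposition satisfying (a)--(d), I would first set $\hh_{\mathrm{i}}:=\hh_{\mathrm{u}}\oplus\hh_{\mathrm{s}}$ and note that $Q|_{\hh_{\mathrm{i}}}=Q|_{\hh_{\mathrm{u}}}\oplus Q|_{\hh_{\mathrm{s}}}$ is an isometry (a unitary operator together with a unilateral shift), so that $\hh_2=\hh_{\mathrm{i}}\oplus\hh_{\mathrm{si}}$ meets conditions (a) and (b) of Theorem~\ref{q2iso}(v); hence by (v)$\Rightarrow$(i) of that theorem $T$ is a quasi-Brownian isometry. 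To see that $T$ is \emph{not} Brownian I want to apply Lemma~\ref{q2iso-N}, but that lemma is phrased for the case in which $\hh_{\mathrm{i}}=\hh_{\mathrm{u}}\oplus\hh_{\mathrm{s}}$ is the von Neumann--Wold decomposition of $\hh_{\mathrm{i}}$ for $Q|_{\hh_{\mathrm{i}}}$. This is precisely where I invoke the uniqueness part of \cite[Theorem~1.1]{SF70}: since $Q|_{\hh_{\mathrm{u}}}$ is unitary and $Q|_{\hh_{\mathrm{s}}}$ is a unilateral shift, the given splitting $\hh_{\mathrm{i}}=\hh_{\mathrm{u}}\oplus\hh_{\mathrm{s}}$ must coincide with the von Neumann--Wold decomposition. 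Consequently Lemma~\ref{q2iso-N} is applicable, and as $|E|\big|_{\hh_{\mathrm{s}}}\neq 0$ by (d), the operator $T$ fails to be a Brownian isometry; combined with the quasi-Brownian conclusion this yields (i). Apart from this identification of the given summands with the canonical von Neumann--Wold pieces, no computation beyond what is already recorded in Theorem~\ref{q2iso} and Lemma~\ref{q2iso-N} is needed.
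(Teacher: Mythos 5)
Your proposal is correct and takes essentially the same approach as the paper: the paper's own proof is precisely the observation that the corollary follows from Theorem~\ref{q2iso}, Lemma~\ref{q2iso-N} and the uniqueness part of \cite[Theorem~1.1]{SF70} (mirroring the proof of Theorem~\ref{snia-pr}), and your write-up fleshes out exactly how these ingredients combine, including the crucial step in (ii)$\Rightarrow$(i) where uniqueness of the von Neumann--Wold decomposition is invoked to identify the given splitting $\hh_{\mathrm{i}}=\hh_{\mathrm{u}}\oplus\hh_{\mathrm{s}}$ with the canonical one so that Lemma~\ref{q2iso-N} applies.
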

As shown below, the class of Brownian isometries is
the only subclass of $\gqb$ considered in this paper
which cannot be characterized by the Taylor spectrum
$\sigma(|Q|,|E|)$ of the pair $(|Q|,|E|).$
   \begin{rem} \label{haha-2}
Notice that the condition (ii) of
Theorem~\ref{snia-pr} is equivalent to the conjunction
of the following two inclusions
   \begin{align} \label{dra-1}
   \begin{aligned}
\sigma(|Q|, |E|) & \subseteq \mathbb T_+ \cup (\{1\}
\times \mathbb R_+),
   \\
\sigma(|Q|, |E|, |Q^*|) & \subseteq \big(\mathbb T_+
\times \mathbb R_+ \big) \cup (\mathbb R_+^2 \times
\{1\}),
   \end{aligned}
   \end{align}
where $\sigma(|Q|, |E|, |Q^*|)$ stands for the Taylor
spectrum of $(|Q|, |E|, |Q^*|)$ (recall that the
operators $|Q|$, $|E|$ and $|Q^*|$ commute; see
Lemma~\ref{snia-pr-C}). In view of
Theorem~\ref{q2iso}, it remains to show that the
equation $(|Q^*|^2-I)(|Q|^2 + |E|^2 - I)=0$ is
equivalent to the second inclusion in \eqref{dra-1}.
However, this is immediate from \eqref{font-1} and the
spectral mapping theorem applied to the polynomial $p$
in three variables given by
   \begin{align*}
p(s, t, r)=(r^2-1)(s^2+t^2-1).
   \end{align*}

We conclude this remark by reexamining \cite[Example~
4.4]{A-C-J-S}. Let $V\in \ogr{\hh_1},$ $E\in
\ogr{\hh_2, \hh_1}$ and $Q\in \ogr{\hh_2}$ be
isometric operators such that $Q$ is not unitary and
$V^*E=0.$ As shown in \cite[Example~ 4.4]{A-C-J-S},
the operator $T$ defined by \eqref{brep} is a
quasi-Brownian isometry (obviously of class $\gqb$)
which is not a Brownian isometry. Clearly, the first
inclusion in \eqref{dra-1} holds. Hence by the above
discussion the second one does not hold. The latter
also follows directly from the equality $\sigma(|Q|,
|E|, |Q^*|)=\{1\} \times \{1\} \times \{0,1\}$ which
is a consequence of the projection property of the
Taylor spectrum. Regarding Corollary~\ref{neander},
note that $\hh_2=\hh_{\mathrm{u}} \oplus
\hh_{\mathrm{s}},$ $\hh_{\mathrm{si}}=\{0\},$
$\hh_{\mathrm{s}} \neq \{0\}$ and
$|E|\big|_{\hh_{\mathrm{s}}} \neq 0.$ Summarizing, the
operator $T$ is a quasi-Brownian isometry which is not
a Brownian isometry and $\sigma(|Q|, |E|)=\{(1,1)\}.$
On the other hand, if $\tilde T$ is any non-isometric
Brownian isometry, then it is a Brownian-type operator
of class $\mathcal U$ (see the remark just after
Definition~\ref{defq}), i.e.,
$\tilde T = \big[\begin{smallmatrix} \tilde V & \tilde E \\
0 & \tilde Q \end{smallmatrix}\big] \in
\EuScript{Q}_{\tilde \hh_1,\tilde \hh_2}$ relative to
an orthogonal decomposition $\tilde \hh_1 \oplus
\tilde \hh_2,$ where $\tilde Q$ is a unitary operator.
As a consequence, $\sigma(|\tilde Q|, |\tilde
E|)=\{(1,1)\}.$ This means that Brownian isometries
cannot be characterized by the Taylor spectrum
$\sigma(|Q|, |E|).$
   \hfill{$\diamondsuit$}
   \end{rem}
   \section{\label{Sec8}$m$-isometries and
related operators of class $\gqb$}
   In this section we characterize $m$-contractions,
$m$-isometries and $m$-expansions of class $\gqb$ by using
the Taylor spectrum approach.

Given an integer $m \Ge 1$ and an operator $T\in
\ogr{\hh},$ we write
   \begin{align*} \bscr_m(T) = \sum_{j=0}^m (-1)^j
{m \choose j}{T^*}^jT^j.
   \end{align*}
Recall that an operator $T\in \ogr{\hh}$ is
   \begin{enumerate}
   \item[$\bullet$] {\em $m$-contractive} if $\bscr_m(T) \Ge 0,$
   \item[$\bullet$] {\em $m$-expansive} if $\bscr_m(T) \Le 0,$
   \item[$\bullet$] {\em $m$-isometric} if $T$ is
$m$-contractive and $m$-expansive, that is $\bscr_m(T) =
0,$
   \item[$\bullet$] {\em completely hyperexpansive} if $T$
is $m$-expansive for all $m\Ge 1.$
   \end{enumerate}
The above-mentioned concepts can be attributed to many
authors, such as Agler \cite{Ag} ($m$-contractivity),
Richter \cite{R-0} ($2$-expansivity), Aleman \cite{Alem93}
(complete hyperexpansivity for special operators), Agler
\cite{Ag-0} ($m$-isometricity) and Athavale \cite{Ath}
($m$-expansivity and complete hyperexpansivity). It is
well-known that a $2$-isometry is $m$-isometric for every
integer $m\Ge 2$ (see \cite[Paper I, \S 1]{Ag-St}).
Combined with \cite[Lemma 1(a)]{R-0}, this implies that
each $2$-isometry is completely hyperexpansive. On the
other hand, Agler proved in \cite[Theorem~3.1]{Ag} that an
operator $T\in \ogr{\hh}$ is a subnormal contraction if and
only if it is {\em completely hypercontractive}, i.e., $T$
is $m$-contractive for every positive integer $m.$

   The expression $\bscr_m(T)$ for an operator $T$ of class
$\gqb$ can be described as follows.
   \begin{lem} \label{prob-1}
Suppose that $T = \big[\begin{smallmatrix} V & E \\
0 & Q \end{smallmatrix}\big] \in \gqbh.$ Then
   \begin{align*}
\bscr_m(T) = \begin{bmatrix}  0 & 0 \\[1ex]
0 & \psi_m(|Q|,|E|) \end{bmatrix}, \quad m\in \nbb,
   \end{align*}
where $\psi_m\colon \rbb_+^2 \to \rbb$ are polynomial
functions defined by
   \begin{align} \label{chmur-b1}
\psi_m(s,t) = \Big(1-s^2 - t^2\Big) (1-s^2)^{m-1},
\quad (s,t)\in \rbb_+^2, \, m \in \nbb.
   \end{align}
   \end{lem}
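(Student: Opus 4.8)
The plan is to reduce everything to the block formula for $T^{*j}T^j$ supplied by Proposition~\ref{babuc}(ii), and from there to a single scalar polynomial identity in two variables. First I would recall from Proposition~\ref{babuc}(ii) that $T^{*j}T^j = \big[\begin{smallmatrix} I & 0 \\ 0 & \varOmega_j\end{smallmatrix}\big]$ for every $j\in\zbb_+$, with $\varOmega_j$ given by \eqref{dudu-1}. Forming the alternating sum $\bscr_m(T)=\sum_{j=0}^m(-1)^j\binom{m}{j}T^{*j}T^j$ block-by-block, each summand is block diagonal, so $\bscr_m(T)$ is block diagonal; its upper-left corner collapses to $\big(\sum_{j=0}^m(-1)^j\binom{m}{j}\big)I=(1-1)^mI=0$ by the binomial theorem. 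This already yields the three asserted zero blocks, and it remains to identify the lower-right corner $\sum_{j=0}^m(-1)^j\binom{m}{j}\varOmega_j$ with $\psi_m(|Q|,|E|)$.

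For the lower-right corner I would use the fact, recorded in \eqref{slipy}--\eqref{form-1}, that $\varOmega_j=\varphi_j(|Q|,|E|)$, where $\varphi_j$ is the polynomial in \eqref{form-1}. Since $|Q|$ and $|E|$ commute, the Stone--von Neumann functional calculus is linear, so $\sum_{j=0}^m(-1)^j\binom{m}{j}\varOmega_j=\chi_m(|Q|,|E|)$ with $\chi_m:=\sum_{j=0}^m(-1)^j\binom{m}{j}\varphi_j$, a genuine polynomial function on $\rbb_+^2$. Thus the lemma is reduced to the purely scalar identity $\chi_m=\psi_m$ on $\rbb_+^2$. (Equivalently, one can avoid the functional calculus and argue directly in the commutative algebra generated by $|Q|^2$ and $|E|^2$; the computation is the same.)

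To establish $\chi_m=\psi_m$ I would work first on the region $s\neq 1$, where the closed form \eqref{mlask-1} gives $\varphi_j(s,t)=\frac{t^2}{1-s^2}+\big(1-\frac{t^2}{1-s^2}\big)s^{2j}$ (valid also for $j=0$). Substituting this and using the two elementary binomial sums $\sum_{j=0}^m(-1)^j\binom{m}{j}=0$ and $\sum_{j=0}^m(-1)^j\binom{m}{j}s^{2j}=(1-s^2)^m$, the first term drops out and the second yields $\big(1-\frac{t^2}{1-s^2}\big)(1-s^2)^m=(1-s^2)^{m-1}(1-s^2-t^2)=\psi_m(s,t)$, exactly \eqref{chmur-b1}.

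Finally, since both $\chi_m$ and $\psi_m$ are polynomials in $(s,t)$ and they coincide on the dense open set $\{(s,t)\in\rbb_+^2\colon s\neq 1\}$, they coincide on all of $\rbb_+^2$. The computation is entirely routine; the only point requiring a little care is precisely this passage from $s\neq 1$ to the line $s=1$, where the rational representation \eqref{mlask-1} degenerates. I would handle it by invoking the polynomial identity theorem rather than treating $s=1$ separately, which keeps the argument clean and avoids re-deriving $\varphi_j(1,t)=1+jt^2$ by hand.
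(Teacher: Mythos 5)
Your proposal is correct and follows essentially the same route as the paper's proof: block-diagonalize $\bscr_m(T)$ via Proposition~\ref{babuc}(ii), reduce to the scalar identity $\sum_{j=0}^m(-1)^j\binom{m}{j}\varphi_j=\psi_m$ through the joint spectral calculus of $(|Q|,|E|)$ (the paper phrases this as $\varLambda_m=\int\tilde\psi_m\,dG$), and compute on $\{s\neq 1\}$ using \eqref{mlask-1} and the binomial sums. The only deviation is cosmetic: the paper settles the line $s=1$ by a direct binomial computation of $\tilde\psi_m(1,t)$, whereas you dispose of it by the polynomial identity theorem on the dense set $\{s\neq 1\}$, which is a legitimate (and slightly cleaner) shortcut.
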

   \begin{proof}
For $m\in \nbb,$ we set $\varLambda_m = \sum_{j=0}^m
(-1)^j {m \choose j}\varOmega_j,$ where $\varOmega_j$
are as in \eqref{dudu-1}. In view of
Proposition~\ref{babuc}(ii), we have
   \begin{align} \label{kra-kra}
\bscr_m(T) = \begin{bmatrix} 0 & 0
   \\
0 & \varLambda_m
\end{bmatrix}, \quad m\in \nbb.
   \end{align}
Let $G$ be the joint spectral measure of $(|Q|,|E|).$
It follows from \eqref{slipy} and \eqref{form-1}~ that
   \begin{align} \label{dyr-dyr}
\varLambda_m = \int_{\rbb_+^2} \tilde\psi_m d G, \quad
m\in \nbb,
   \end{align}
where $\tilde\psi_m\colon \rbb_+^2 \to \rbb$ are
continuous functions defined by
   \begin{align*}
\tilde\psi_m(s,t) = \sum_{j=0}^m (-1)^j {m \choose j}
\varphi_j(s,t), \quad (s,t) \in \rbb_+^2, \, m\in
\nbb.
   \end{align*}
Now we show that $\tilde\psi_m=\psi_m$ for any $m\in
\nbb.$ For this, note that
   \begin{align*}
\tilde\psi_m(1,t) \overset{\eqref{form-1}}= t^2
\sum_{j=1}^m (-1)^j {m \choose j} j = - m t^2
\sum_{j=0}^{m-1} (-1)^j {m-1 \choose j}, \quad t \in
\rbb_+, \, m \in \nbb.
   \end{align*}
Hence, we get
   \begin{align*}
\tilde\psi_m(1,t)=
   \begin{cases}
-t^2 & \text{if } m=1,
   \\[1ex]
0 & \text{if } m\Ge 2,
   \end{cases}
\quad t \in \rbb_+.
   \end{align*}
In turn if $s\neq 1$, we can argue as follows:
   \begin{align*}
\tilde\psi_m(s,t) & \overset{\eqref{mlask-1}}=
\sum_{j=0}^m (-1)^j {m \choose j}
\left(\frac{t^2}{1-s^2} +
\bigg(1-\frac{t^2}{1-s^2}\bigg)s^{2j}\right)
   \\
& \hspace{.75ex}= \bigg(1-\frac{t^2}{1-s^2}\bigg)
\sum_{j=0}^m (-1)^j {m \choose j} s^{2j}
   \\
& \hspace{.75ex} = (1-s^2 - t^2) (1-s^2)^{m-1}, \quad
(s,t) \in (\rbb_+\setminus \{1\}) \times \rbb_+, \,
m\in \nbb.
   \end{align*}
Putting all this together we see that $\tilde\psi_m
=\psi_m$ for all $m\in \nbb.$ Combined with
\eqref{fr-fr-1}, \eqref{kra-kra} and \eqref{dyr-dyr},
this completes the proof.
   \end{proof}
\begin{figure}
\begin{tikzpicture}[scale=.317, transform shape]
\tikzset{vertex/.style = {shape=circle,draw,minimum
size=1em}} \tikzset{edge/.style = {->,> = latex'}}

\node[] (1) at (-6.5, -6.5) {$\scalebox{1.6}{(0, 0)}$};
\node[] (1) at (0.2, -6.5) {$\scalebox{1.6}{(1, 0)}$};

\node[] (1) at (-6.8, -0.1) {$\scalebox{1.6}{(0, 1)}$};

\node[] (1) at (-3.5, -7.5) {$\scalebox{2.2}{\mbox{$m = 1
\phantom{bp}$}}$};

\draw[->, line width=0.05mm] (0,-6) -> (4,-6); \draw[->,
line width=0.05mm] (-6,0) -> (-6,3);

\def\Radius{6}

\filldraw[fill opacity=0.7, line width=0.5mm, fill=gray]
(-6,-6) -- (0,-6) arc (0:90:6cm) -- cycle;
   \end{tikzpicture}
   \begin{tikzpicture}[scale=.317, transform shape]
\tikzset{vertex/.style = {shape=circle,draw,minimum
size=1em}} \tikzset{edge/.style = {->,> = latex'}}

\node[] (1) at (-6.5, -6.5) {$\scalebox{1.6}{(0, 0)}$};
\node[] (1) at (0.2, -6.5) {$\scalebox{1.6}{(1, 0)}$};

\node[] (1) at (-6.8, -0.1) {$\scalebox{1.6}{(0, 1)}$};

\node[] (1) at (-3.5, -7.5) {$\scalebox{2.2}{\mbox{$m\Ge 3$
odd}}$};

\draw[->, line width=0.05mm] (0,-6) -> (4,-6);

\draw[line width=0.5mm] (0,-6) -> (0,3);

\draw[->, line width=0.05mm] (-6,0) -> (-6,3);

\def\Radius{6}

\filldraw[fill opacity=0.7, line width=0.5mm, fill=gray]
(-6,-6) -- (0,-6) arc (0:90:6cm) -- cycle;
   \end{tikzpicture}
   \begin{tikzpicture}[scale=.317, transform shape]
\tikzset{vertex/.style = {shape=circle,draw,minimum
size=1em}} \tikzset{edge/.style = {->,> = latex'}}

\fill [gray] (0, -6) rectangle (4, 3);

\node[] (1) at (-6.5, -6.5) {$\scalebox{1.6}{(0, 0)}$};

\node[] (1) at (0.3, -6.5) {$\scalebox{1.6}{(1, 0)}$};

\node[] (1) at (-6.8, -0.1) {$\scalebox{1.6}{(0, 1)}$};

\node[] (1) at (-3, -7.5) {$\scalebox{2.2}{\mbox{$m\Ge 2$
even}\phantom{d}}$};

\draw[->, line width=0.5mm] (-6,-6) -> (4,-6);

\draw[line width=0.5mm] (0,-6) -> (0,3);

\draw[->, line width=0.05mm] (-6,0) -> (-6,3);

\draw[line width=0.5mm] (0,-6) -> (0,3);

\def\Radius{6}

\filldraw[fill opacity=0.7, line width=0.5mm, fill=gray]
(-6,-6) -- (0,-6) arc (0:90:6cm) -- cycle;
   \end{tikzpicture}
   \caption{Spectral region for $m$-contractivity of
operators of class $\gqb.$} \label{fig8}
   \end{figure}
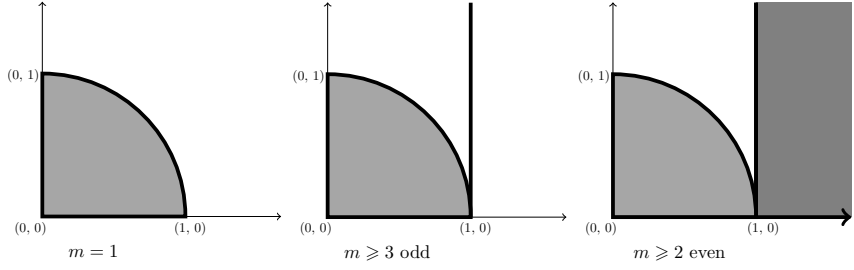
   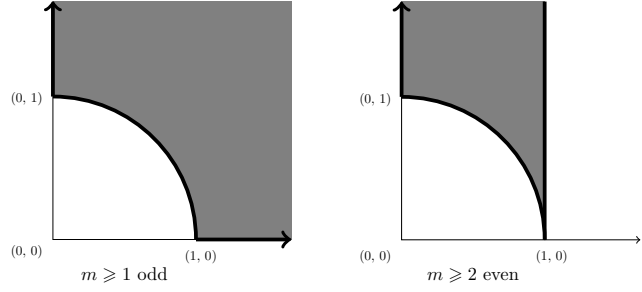
\begin{figure}
   \begin{tikzpicture}[scale=.317, transform shape]
\tikzset{vertex/.style = {shape=circle,draw,minimum
size=1em}} \tikzset{edge/.style = {->,> = latex'}}

\fill [gray] (-6, -6) rectangle (4, 4);

\node[] (1) at (-7.1, -6.5) {$\scalebox{1.6}{(0, 0)}$};
\node[] (1) at (0.2, -6.7) {$\scalebox{1.6}{(1, 0)}$};
\node[] (1) at (-7.1, -0.1) {$\scalebox{1.6}{(0, 1)}$};
\node[] (1) at (-3, -7.5) {$\scalebox{2.2}{\mbox{$m\Ge 1$
odd}}$};

\draw[->, line width=0.5mm] (0,-6) -> (4,-6);

\draw[->, line width=0.5mm] (-6,0) -> (-6,4);

\def\Radius{6}

\filldraw[fill opacity=1, line width=0.1mm, fill=white]
(-6,-6) -- (0,-6) arc (0:90:6cm) -- cycle;

\draw[black, line width=0.5mm] (0,-6) arc (0:90:6);
   \end{tikzpicture}
   \hspace{4ex}
   \begin{tikzpicture}[scale=.317, transform shape]
\tikzset{vertex/.style = {shape=circle,draw,minimum
size=1em}} \tikzset{edge/.style = {->,> = latex'}}

\fill [gray] (-6, -6) rectangle (0, 4);

\node[] (1) at (-7.1, -6.7) {$\scalebox{1.6}{(0, 0)}$};
\node[] (1) at (0.3, -6.7) {$\scalebox{1.6}{(1, 0)}$};
\node[] (1) at (-7.1, -0.1) {$\scalebox{1.6}{(0, 1)}$};
\node[] (1) at (-3, -7.5) {$\scalebox{2.2}{\mbox{$m\Ge 2$
even}}$};

\draw[->, line width=0.05mm] (0,-6) -> (4,-6);

\draw[line width=0.5mm] (0,-6) -> (0,4);

\draw[->, line width=0.5mm] (-6,0) -> (-6,4);

\def\Radius{6}

\filldraw[fill opacity=1, line width=0.1mm, fill=white]
(-6,-6) -- (0,-6) arc (0:90:6cm) -- cycle;

\draw[black, line width=0.5mm] (0,-6) arc (0:90:6);
\end{tikzpicture}
\caption{Spectral region for $m$-expansivity of
operators of class $\gqb.$} \label{fig10}
\end{figure}

   We are now in a position to characterize
$m$-contractivity, $m$-isometricity and
$m$-expansivity of operators of class $\gqb$. The
spectral regions for $m$-contractivi\-ty and
$m$-expansivity of operators of class $\gqb$ are
illustrated in Figures~\ref{fig8} and \ref{fig10} (for
the case $m=1,$ see Proposition~ \ref{kontr-2}).
   \begin{thm} \label{main2}
Assume that $T = \big[\begin{smallmatrix} V & E \\
0 & Q \end{smallmatrix}\big] \in \gqbh$ and $m\Ge 2$
is an integer. Then the following assertions hold{\em
:}
   \begin{enumerate}
   \item[(i)] $T$ is $m$-contractive if and only if
   \begin{align*}
\sigma(|Q|,|E|) \subseteq
   \begin{cases}
\dbbc_+ \cup \big( \{1\} \times \rbb_+\big) & \text{if
} m \text{ is odd,}
   \\[1ex]
\dbbc_+ \cup \big([1,\infty) \times \rbb_+\big) &
\text{if } m \text{ is even,}
   \end{cases}
   \end{align*}
   \item[(ii)] $T$ is $m$-expansive if and only if
   \begin{align*}
\sigma(|Q|,|E|) \subseteq
   \begin{cases}
\rbb_+^2 \setminus \dbb_+ & \text{if } m \text{ is
odd,}
   \\[1ex]
\big(\rbb_+^2 \setminus \dbb_+\big) \cap \big([0,1]
\times \rbb_+\big) & \text{if } m \text{ is even,}
   \end{cases}
   \end{align*}
   \item[(iii)] $T$ is $m$-isometric if and only if
$\sigma(|Q|,|E|) \subseteq \tbb_+ \cup \big(\{1\}
\times \rbb_+\big).$
   \end{enumerate}
   \end{thm}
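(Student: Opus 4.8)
The plan is to read everything off Lemma~\ref{prob-1}, which already does the substantive work. That lemma gives
\[
\bscr_m(T) = \begin{bmatrix} 0 & 0 \\ 0 & \psi_m(|Q|,|E|) \end{bmatrix},
\]
so the three defining conditions $\bscr_m(T)\Ge 0$, $\bscr_m(T)\Le 0$ and $\bscr_m(T)=0$ are equivalent, respectively, to $\psi_m(|Q|,|E|)\Ge 0$, $\psi_m(|Q|,|E|)\Le 0$ and $\psi_m(|Q|,|E|)=0$. Here $\psi_m(|Q|,|E|)$ is selfadjoint, being a real polynomial expression in the commuting selfadjoint operators $|Q|$ and $|E|$. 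Thus the entire theorem reduces to deciding the sign of this single operator.

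Next I would convert each operator (in)equality into a pointwise statement on the Taylor spectrum. Since $\psi_m\colon\rbb_+^2\to\rbb$ is continuous, Theorem~\ref{spmt}(ii) yields $\sigma(\psi_m(|Q|,|E|))=\psi_m(\sigma(|Q|,|E|))$; combined with \eqref{font-1} (applied with $a=0$ and $b=\|\psi_m(|Q|,|E|)\|$, exactly as in the proof of Proposition~\ref{kontr-2}), positivity of $\psi_m(|Q|,|E|)$ is equivalent to $\sigma(\psi_m(|Q|,|E|))\subseteq[0,\infty)$, hence to $\psi_m\Ge 0$ at every point of the compact set $\sigma(|Q|,|E|)\subseteq\rbb_+^2$. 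The same argument handles $\psi_m(|Q|,|E|)\Le 0$ and $\psi_m(|Q|,|E|)=0$. Therefore (i), (ii) and (iii) become, respectively, the assertions that $\sigma(|Q|,|E|)$ is contained in the regions $\{\psi_m\Ge 0\}$, $\{\psi_m\Le 0\}$ and $\{\psi_m=0\}$ computed inside $\rbb_+^2$.

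All that then remains is the elementary determination of the sign of
\[
\psi_m(s,t)=(1-s^2-t^2)(1-s^2)^{m-1}
\]
on $\rbb_+^2$, using \eqref{chmur-b1}. The first factor is positive on $\dbb_+$, zero on $\tbb_+$ and negative on $\rbb_+^2\setminus\dbbc_+$, so its sign is governed by the circle $s^2+t^2=1$. The second factor $(1-s^2)^{m-1}$ vanishes exactly on the line $s=1$, and off that line its sign depends on the parity of $m-1$: for $m$ odd it is nonnegative everywhere, while for $m$ even it carries the sign of $1-s^2$ (positive for $s<1$, negative for $s>1$). Multiplying the two factors and splitting according to the three regions $s<1$, $s=1$, $s>1$ produces the six regions in the statement; for instance, for $m$ even the band $s>1$ lies entirely in $\{\psi_m\Ge 0\}$, which is what creates the half-plane $[1,\infty)\times\rbb_+$ in (i), whereas the zero set $\{\psi_m=0\}=\tbb_+\cup(\{1\}\times\rbb_+)$ is visibly independent of the parity of $m$, giving (iii).

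I do not expect a genuinely hard step here: the heavy lifting is packaged in Lemma~\ref{prob-1}, and what is left is the spectral-mapping translation of the second paragraph together with the sign check of a product of two explicit factors. The only point requiring care is the boundary bookkeeping along the line $s=1$ and on the arc $\tbb_+$, where one factor of $\psi_m$ vanishes: there one must verify that the vanishing of $\psi_m$ is absorbed into the correct region for each of the three conditions and each parity, so that the unions in (i) and (ii) come out exactly as stated, with neither a spurious nor a missing boundary piece.
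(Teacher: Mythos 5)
Your proposal is correct and takes essentially the same approach as the paper: both rest entirely on Lemma~\ref{prob-1} and then convert positivity, negativity or vanishing of $\psi_m(|Q|,|E|)$ into the pointwise sign condition on $\sigma(|Q|,|E|)$, followed by the same elementary sign analysis of $(1-s^2-t^2)(1-s^2)^{m-1}$. The only cosmetic differences are that the paper performs the conversion via Theorem~\ref{spmt}(i) and Lemma~\ref{main-ad} rather than your equally valid spectral-mapping route through Theorem~\ref{spmt}(ii) and \eqref{font-1} (which the paper itself uses in Proposition~\ref{kontr-2} and Theorem~\ref{q2iso}), and that it deduces (iii) by intersecting the regions of (i) and (ii) instead of computing the zero set of $\psi_m$ directly.
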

   \begin{proof} Since the proofs of (i)
and (ii) are similar, we justify only (i). Let $G$ be
the joint spectral measure of the pair $(|Q|,|E|).$
Observe that by \eqref{fr-fr-1} and
Lemma~\ref{prob-1}, $\bscr_m(T)\Ge 0$ if and only if
$\int_{\rbb_+^2} \psi_m d G \Ge 0.$ By
Theorem~\ref{spmt}(i) and Lemma~\ref{main-ad}, the
latter holds if and only if
   \begin{align} \label{coldplay-1}
\sigma(|Q|,|E|) = \supp{G} \subseteq \big\{(s,t)\in
\rbb_+^2\colon \psi_m(s,t) \Ge 0\big\}.
   \end{align}
Using \eqref{chmur-b1}, we verify that
   \begin{align*}
\big\{(s,t)\in \rbb_+^2\colon \psi_m(s,t) \Ge 0\big\}
=
\begin{cases} \dbbc_+ \cup \big( \{1\} \times
\rbb_+\big) & \text{if } m \text{ is odd,}
   \\[1ex]
\dbbc_+ \cup \big([1,\infty) \times \rbb_+\big) &
\text{if } m \text{ is even.}
   \end{cases}
   \end{align*}
Combined with \eqref{coldplay-1}, this yields (i).
Finally, (iii) can be deduced from (i) and (ii). This
completes the proof.
   \end{proof}
   \begin{cor} \label{coldplay-2}
Assume that $T = \big[\begin{smallmatrix} V & E \\
0 & Q \end{smallmatrix}\big] \in \gqbh$ and $m\Ge 2$
is an integer. Then the following assertions hold{\em
:}
   \begin{enumerate}
   \item[(i)] if $m$ is odd $($resp., even$)$, then $T$ is
$m$-contractive if and only if $T$ is $3$-contractive
$($resp., $2$-contractive$)$,
   \item[(ii)] if $m$ is odd $($resp., even$)$, then $T$ is
$m$-expansive if and only if $T$ is expansive
$($resp., $2$-expansive$)$,
   \item[(iii)] $T$ is
$m$-isometric if and only if $T$ is $2$-isometric,
   \item[(iv)] $T$ is completely hypercontractive if and
only if $T$ is contractive,
   \item[(v)] $T$ is completely hyperexpansive if and
only if $T$ is $2$-expansive.
   \end{enumerate}
   \end{cor}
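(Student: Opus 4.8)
The plan is to read off every assertion directly from Theorem~\ref{main2}, whose decisive feature is that the spectral region characterizing each property depends only on the parity of $m$ and not on its precise value (for $m \Ge 2$); the missing boundary cases $m=1$ are supplied by Proposition~\ref{kontr-2}(iii). Once this parity-independence is recorded, each of (i)--(iii) reduces to the tautology that two containment conditions with the same right-hand side are equivalent, while (iv)--(v) reduce to comparing a few explicit subsets of $\rbb_+^2$.

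For (i), I would note that in Theorem~\ref{main2}(i) the region is $\dbbc_+ \cup (\{1\}\times \rbb_+)$ for \emph{every} odd $m \Ge 3$ and $\dbbc_+ \cup ([1,\infty)\times \rbb_+)$ for \emph{every} even $m \Ge 2$; since $m$-contractivity is equivalent to the containment of $\sigma(|Q|,|E|)$ in the corresponding region, any two exponents of the same parity yield equivalent conditions, giving the stated equivalences with $m=3$ and $m=2$. The same argument applied to Theorem~\ref{main2}(ii) proves (ii); the only point to watch is that for odd $m$ the region $\rbb_+^2\setminus\dbb_+$ coincides with the expansivity region from Proposition~\ref{kontr-2}(iii), so odd $m$-expansivity is equivalent to expansivity rather than to $3$-expansivity. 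Part (iii) is immediate, since the region $\tbb_+ \cup (\{1\}\times \rbb_+)$ in Theorem~\ref{main2}(iii) is independent of $m \Ge 2$.

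For (iv) and (v) I would combine the previous parts with an inclusion of regions. By (i), complete hypercontractivity ($m$-contractivity for all $m \Ge 1$) amounts to simultaneous $1$-, $2$- and $3$-contractivity; since the contractivity region $\dbbc_+$ is contained in both $\dbbc_+\cup(\{1\}\times\rbb_+)$ and $\dbbc_+\cup([1,\infty)\times\rbb_+)$, contractivity already forces $2$- and $3$-contractivity, which gives (iv). Dually, by (ii), complete hyperexpansivity is equivalent to simultaneous $1$- and $2$-expansivity; since $(\rbb_+^2\setminus\dbb_+)\cap([0,1]\times\rbb_+) \subseteq \rbb_+^2\setminus\dbb_+$, $2$-expansivity alone implies expansivity, yielding (v).

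I do not expect a genuine obstacle here: granted Theorem~\ref{main2}, the corollary is bookkeeping of set inclusions. The only points needing care are interfacing correctly with the $m=1$ cases through Proposition~\ref{kontr-2} (since Theorem~\ref{main2} is stated only for $m \Ge 2$) and verifying the two region containments invoked in (iv) and (v).
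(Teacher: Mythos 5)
Your proposal is correct and takes essentially the same approach as the paper: the paper's entire proof reads ``Use Theorem~\ref{main2} and additionally Proposition~\ref{kontr-2} in the cases (ii), (iv) and (v),'' which is precisely the parity-independence of the spectral regions plus the region inclusions you spell out. Your write-up simply makes explicit the bookkeeping (including the correct observation that odd $m$-expansivity collapses to expansivity rather than $3$-expansivity) that the paper leaves to the reader.
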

   \begin{proof}
Use Theorem~\ref{main2} and additionally
Proposition~\ref{kontr-2} in the cases (ii), (iv) and (v).
   \end{proof}
   \begin{figure}
   \begin{tikzpicture}[scale=.35, transform shape]
\tikzset{vertex/.style = {shape=circle,draw,minimum
size=1em}} \tikzset{edge/.style = {->,> = latex'}}

\node[] (1) at (-6.5, -6.5) {$\scalebox{1.5}{(0, 0)}$};
\node[] (1) at (0.5, -6.5) {$\scalebox{1.5}{(1, 0)}$};
\node[] (1) at (-6.8, 0.3) {$\scalebox{1.5}{(0, 1)}$};
\node[] (1) at (-3, -8) {$\scalebox{2}{\mbox{quasi-Brownian
isometries}}$}; \node[] (1) at (-3, -8.7)
{$\scalebox{1.3}{\mbox{$||$}}$}; \node[] (1) at (-3, -9.4)
{$\scalebox{2}{\mbox{$2$-isometries}\phantom{p}}$};

\draw[->, line width=0.05mm] (0,-6) -> (3,-6); \draw[line
width=0.5mm] (0,-6) -> (0,3); \draw[->, line width=0.05mm]
(-6,0) -> (-6,3);

\def\Radius{6}

\filldraw[fill opacity=1, line width=0.1mm, fill=white]
(-6,-6) -- (0,-6) arc (0:90:6cm) -- cycle;

\draw[black, line width=0.5mm] (0,-6) arc (0:90:6);
   \end{tikzpicture}
   \begin{tikzpicture}[scale=.35, transform shape]
\tikzset{vertex/.style = {shape=circle,draw,minimum
size=1em}} \tikzset{edge/.style = {->,> = latex'}}

\fill [gray] (-6, -6) rectangle (0, 3);

\node[] (1) at (-6.5, -6.5) {$\scalebox{1.5}{(0, 0)}$};
\node[] (1) at (0.5, -6.5) {$\scalebox{1.5}{(1, 0)}$};
\node[] (1) at (-6.8, 0.3) {$\scalebox{1.5}{(0, 1)}$};
\node[] (1) at (-3, -8)
{$\scalebox{2}{\phantom{b}\mbox{complete
hyperexpansions}}$}; \node[] (1) at (-3, -8.7)
{$\scalebox{1.3}{\mbox{$||$}}$}; \node[] (1) at (-3, -9.4)
{$\scalebox{2}{\mbox{$2$-expansions}}$};

\draw[->, line width=0.05mm] (0,-6) -> (3,-6); \draw[line
width=0.5mm] (0,-6) -> (0,3); \draw[->, line width=0.5mm]
(-6,0) -> (-6,3);

\def\Radius{6}

\filldraw[fill opacity=1, line width=0.1mm, fill=white]
(-6,-6) -- (0,-6) arc (0:90:6cm) -- cycle;

\draw[black, line width=0.5mm] (0,-6) arc (0:90:6);
   \end{tikzpicture}
   \begin{tikzpicture}[scale=.35, transform shape]
\tikzset{vertex/.style = {shape=circle,draw,minimum
size=1em}} \tikzset{edge/.style = {->,> = latex'}}

\fill [gray] (-6, -6) rectangle (3, 3);

\node[] (1) at (-6.5, -6.5) {$\scalebox{1.5}{(0, 0)}$};
\node[] (1) at (0.5, -6.5) {$\scalebox{1.5}{(1, 0)}$};
\node[] (1) at (-6.8, 0.3) {$\scalebox{1.5}{(0, 1)}$};
\node[] (1) at (-3, -8)
{$\scalebox{2}{\mbox{\phantom{bp}}}$}; \node[] (1) at (-3,
-8.7) {$\scalebox{2}{\mbox{\phantom{bp}expansions}}$};
\node[] (1) at (-3, -9.4) {$\scalebox{2}{\phantom{bp}}$};

\draw[->, line width=0.5mm] (0,-6) -> (3,-6); \draw[->,
line width=0.5mm] (-6,0) -> (-6,3);

\def\Radius{6}

\filldraw[fill opacity=1, line width=0.1mm, fill=white]
(-6,-6) -- (0,-6) arc (0:90:6cm) -- cycle;

\draw[black, line width=0.5mm] (0,-6) arc (0:90:6);
   \end{tikzpicture}
   \caption{Spectral regions for some subclasses of the
class $\gqb.$} \label{fignew}
   \end{figure}
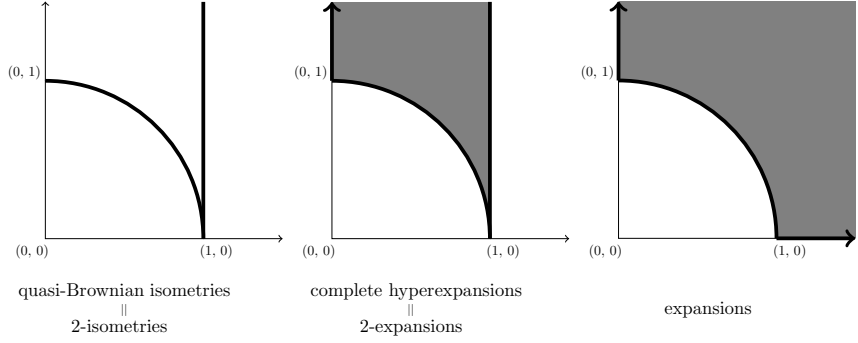
The example below illustrates Theorem~\ref{main2}.
   \begin{exa}[Example~\ref{not-contr-1} continued] \label{not-contr-2}
Let $T_{\tau,\eta}$ be as in
Example~\ref{not-contr-1}. Assume that $m \Ge 2.$
Using \eqref{A-yu-ta}, Proposition~\ref{kontr-2} and
Theorem~\ref{main2}, we get the following:
   \begin{enumerate}
   \item[$1^{\circ}$] if $m$ is odd, then the operator $T_{\tau,\eta}$ is
$m$-contractive if and only if $\tilde Q$ is contractive
and $(|\tau|,|\eta|) \in \dbbc_+ \cup \big( \{1\} \times
\rbb_+\big),$
   \item[$2^{\circ}$] if $m$ is even, then the operator $T_{\tau,\eta}$ is
$m$-contractive if and only if $(|\tau|,|\eta|) \in \dbbc_+
\cup \big([1,\infty) \times \rbb_+\big),$
   \item[$3^{\circ}$] if $m$ is odd, then the operator $T_{\tau,\eta}$ is
$m$-expansive if and only if $T_{\tau,\eta}$ is expansive,
or equivalently if and only if $\tilde Q$ is expansive and
$(|\tau|,|\eta|) \in \rbb_+^2 \setminus \dbb_+,$
   \item[$4^{\circ}$] if $m$ is even, then the operator $T_{\tau,\eta}$ is
$m$-expansive if and only if $\tilde Q$ is an isometry and
$(|\tau|,|\eta|) \in \big(\rbb_+^2 \setminus \dbb_+\big)
\cap \big([0,1] \times \rbb_+\big),$
   \item[$5^{\circ}$] the operator $T_{\tau,\eta}$ is
$m$-isometric if and only if $\tilde Q$ is an isometry and
$(|\tau|,|\eta|) \in \tbb_+ \cup \big(\{1\} \times
\rbb_+\big),$
   \item[$6^{\circ}$] the operator $T_{\tau,\eta}$ is completely
hypercontractive if and only if $\tilde Q$ is a contraction
and $(|\tau|,|\eta|) \in \dbbc_+,$
   \item[$7^{\circ}$] the operator $T_{\tau,\eta}$ is completely
hyperexpansive if and only if $\tilde Q$ is an isometry and
$(|\tau|,|\eta|) \in \big(\rbb_+^2 \setminus \dbb_+\big)
\cap \big([0,1] \times \rbb_+\big).$
   \hfill $\diamondsuit$
   \end{enumerate}
   \end{exa}
   \begin{rem}
Note that in view of \cite[Theorem~2.5]{Gu15}, if
$m\Ge 2$ is even, then any $m$-expansive operator is
$(m-1)$-expansive, while if $m\Ge 3$ is odd, then any
$m$-contractive operator is $(m-1)$-contractive. Using
the assertions $1^{\circ}$--\,$4^{\circ}$ of
Example~\ref{not-contr-2} one can easily show that
none of these implications can be reversed. It is well
known that quasi-Brownian isometries are
$2$-isometric, $2$-isometries are completely
hyperexpansive, complete hyperexpansions are
$2$-expansive and finally $2$-expansions are expansive
(see \cite[Lemma~1]{R-0}). In general, none of these
implications can be reversed. Using
Remark~\ref{haha-2}, Theorem~\ref{q2iso},
Corollary~\ref{coldplay-2} and
Example~\ref{not-contr-2}, one can show that in the
class $\gqb$, these relations take the following form:
   \allowdisplaybreaks
   \begin{align*}
\big\{\text{Brownian isometries in $\gqb$}\big\} &
\varsubsetneq \big\{\text{quasi-Brownian isometries in
$\gqb$}\big\}
   \\
& = \big\{\text{$2$-isometries in $\gqb$}\big\}
   \\
& \varsubsetneq \big\{\text{complete hyperexpansions in
$\gqb$}\big\}
   \\
& = \big\{\text{$2$-expansions in $\gqb$}\big\}
   \\
& \varsubsetneq \big\{\text{expansions in $\gqb$}\big\}.
   \end{align*}
We refer the reader to Figure~\ref{fignew} describing
spectral regions for the above-mentioned subclasses of
the class $\gqb$ (except for Brownian isometries, cf.\
Remark~\ref{haha-2}).
   \hfill{$\diamondsuit$}
   \end{rem}
A recent result due to Badea and Suciu (see
\cite[Theorem~3.4]{B-S}), which states that a
$\triangle_T$-regular $2$-expansive operator $T$ is
completely hyperexpansive if and only if its Cauchy
dual $T'$ is subnormal, solves in the affirmative the
Cauchy dual subnormality problem in the class of
$\triangle_T$-regular $2$-expansions (see
\cite[Theorem~4.5]{A-C-J-S} for an earlier solution of
this problem in the class of $\triangle_T$-regular
$2$-isometries). It is well known and easy to prove
that the relation $T \longleftrightarrow T'$ is a
one-to-one correspondence between expansive operators
and left-invertible contractions. When restricted to
operators of class $\gqb,$ this correspondence becomes
a bijection between expansions and left-invertible
subnormal contractions (see Corollary~\ref{cd-im-su}).
In view of Proposition~\ref{babuc}(v), expansions $T$
of class $\gqb$ are always $\triangle_{T}$-regular.
This suggests that there may exist
$\triangle_{T}$-regular operators outside of the class
of completely hyperexpansive ones for which the Cauchy
dual subnormality problem has an affirmative solution.
This is really the case as shown in
Example~\ref{not-contr-3} below which is based on
Proposition~\ref{bad-suc0}. The proposition itself is
a direct consequence of Propositions~ \ref{kontr-2}
and \ref{babuc}(v), Corollary~\ref{cd-im-su} and
Theorem~\ref{main2}(ii).
   \begin{pro} \label{bad-suc0}
If $T = \big[\begin{smallmatrix} V & E \\ 0 & Q
\end{smallmatrix}\big] \in \gqbh$ is such that
$\sigma(|Q|,|E|) \subseteq \rbb_+^2 \setminus \dbb_+$ and
$\sigma(|Q|,|E|) \cap \big((1,\infty) \times \rbb_+\big)
\neq \emptyset,$ then $T$ is a $\triangle_{T}$-regular
expansion which is not $2$-expansive $($so not completely
hyperexpansive$)$ and whose Cauchy dual $T'$ is a subnormal
contraction.
   \end{pro}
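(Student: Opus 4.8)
The plan is to read off each of the four asserted properties directly from the spectral characterizations already established, since the two hypotheses on $\sigma(|Q|,|E|)$ pin down exactly the required position of the Taylor spectrum. First I would invoke the equivalence (i)$\Leftrightarrow$(iii) of Proposition~\ref{kontr-2}: the hypothesis $\sigma(|Q|,|E|) \subseteq \rbb_+^2 \setminus \dbb_+$ is precisely the spectral condition there for $T$ to be an expansion, so $T$ is expansive.

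Granting expansivity, the next two properties are immediate. By Proposition~\ref{babuc}(v), an operator of class $\gqb$ is $\triangle_T$-regular if and only if it is an expansion; hence $T$ is $\triangle_T$-regular. Moreover, since an expansion satisfies $T^*T \Ge I$ it is in particular left-invertible, so its Cauchy dual $T'$ exists, and Corollary~\ref{cd-im-su} yields at once that $T'$ is a subnormal contraction.

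It remains to show that $T$ is not $2$-expansive (whence not completely hyperexpansive). Here I would use Theorem~\ref{main2}(ii) with $m=2$: since $2$ is even, $T$ is $2$-expansive if and only if $\sigma(|Q|,|E|) \subseteq \big(\rbb_+^2 \setminus \dbb_+\big) \cap \big([0,1] \times \rbb_+\big)$. In particular, $2$-expansivity would force $\sigma(|Q|,|E|) \subseteq [0,1] \times \rbb_+$. But the second hypothesis supplies a point of $\sigma(|Q|,|E|)$ lying in $(1,\infty) \times \rbb_+$, that is, with first coordinate strictly exceeding $1$; such a point is not in $[0,1] \times \rbb_+$, so the inclusion fails and $T$ cannot be $2$-expansive. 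Finally, a completely hyperexpansive operator is by definition $m$-expansive for every integer $m \Ge 1$, in particular for $m=2$; since $T$ is not $2$-expansive, it is not completely hyperexpansive.

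There is no real obstacle to overcome: the entire content of the proposition lies in matching the two hypotheses against the regions appearing in Proposition~\ref{kontr-2} and Theorem~\ref{main2}(ii). The one point worth stating carefully is the contrapositive use of the spectral characterization of $2$-expansivity, since the intersection hypothesis is exactly what is needed to violate the strip $[0,1] \times \rbb_+$ while the first hypothesis simultaneously keeps the spectrum inside the expansive region $\rbb_+^2 \setminus \dbb_+$.
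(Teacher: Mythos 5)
Your proposal is correct and follows exactly the paper's own route: the paper proves Proposition~\ref{bad-suc0} precisely as ``a direct consequence of Propositions~\ref{kontr-2} and \ref{babuc}(v), Corollary~\ref{cd-im-su} and Theorem~\ref{main2}(ii),'' which is the same matching of the two spectral hypotheses against the expansion region, the $\triangle_T$-regularity criterion, the Cauchy dual corollary, and the even-$m$ characterization of $2$-expansivity that you carried out. Your explicit contrapositive step (a point in $(1,\infty)\times\rbb_+$ violates the strip $[0,1]\times\rbb_+$) is exactly the intended argument, just spelled out.
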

To have a concrete example of an operator satisfying
the assumptions of Proposition~\ref{bad-suc0}, we
revisit Example~\ref{not-contr-1} again.
   \begin{exa}[Example~\ref{not-contr-1} continued] \label{not-contr-3}
Let $T_{\tau,\eta}$ be as in Example~
\ref{not-contr-1}. Suppose that $\tilde Q$ is
expansive and $(|\tau|,|\eta|) \in (1,\infty) \times
(0,\infty).$ Applying \eqref{font-1} and
\eqref{A-yu-ta}, we see that the operator
$T_{\tau,\eta}$ satisfies the hypothesis of
Proposition~\ref{bad-suc0}. As a consequence,
$T_{\tau,\eta}$ is a
$\triangle_{T_{\tau,\eta}}$-regular expansion of class
$\gqb$ such that the Cauchy dual $T_{\tau,\eta}'$ of
$T_{\tau,\eta}$ is a subnormal contraction but
$T_{\tau,\eta}$ itself is not $2$-expansive $($so not
completely hyperexpansive$)$.
   \hfill $\diamondsuit$
   \end{exa}
   \section{\label{Sec9}Linear operator pencils built over the class $\gqb$}
In this section we study linear operator pencils that
are associated with operators of class $\gqb.$ By a
{\em linear operator pencil} (see \cite{G-G-K90}) we
mean a mapping
   \begin{align*}
\varPsi\colon \cbb \ni \lambda \longmapsto A + \lambda
B \in \ogr{\hh},
   \end{align*}
where $A, B\in \ogr{\hh}.$ Given $T =
\big[\begin{smallmatrix} V & E \\ 0 & Q
\end{smallmatrix}\big] \in \gqbh,$ we define the
linear operator pencil $T^{\dag}(\lambda)$ by
   \begin{align*}
T^{\dag}(\lambda) = \begin{bmatrix} V & \lambda E \\
0 & Q \end{bmatrix} = \begin{bmatrix} V & 0 \\
0 & Q \end{bmatrix} +  \lambda \begin{bmatrix} 0 & E \\
0 & 0 \end{bmatrix}, \quad \lambda \in \cbb.
   \end{align*}
Clearly, $T^{\dag}(\lambda)\in \gqbh$ for every $\lambda\in
\cbb.$ Observe that $T^{\dag}(\lambda)$ can be regarded as
the perturbation of the
quasinormal operator  $\big[\begin{smallmatrix} V & 0 \\
0 & Q \end{smallmatrix}\big]$ by the nilpotent operator
$\lambda \big[\begin{smallmatrix} 0 & E \\
0 & 0 \end{smallmatrix}\big]$. It is worth pointing
out that the operators
$\big[\begin{smallmatrix} V & 0 \\
0 & Q \end{smallmatrix}\big]$ and $\big[\begin{smallmatrix} 0 & E \\
0 & 0 \end{smallmatrix}\big]$ do not commute in
general (they commute if and only if $VE=EQ.$) Note
that by Corollary~\ref{gyeongju-c},
$T^{\dag}(\lambda)$ is subnormal if and only if
$T^{\dag}(|\lambda|)$ is subnormal. This justifies why
we concentrate on describing the set $\subE{T}$ given
by
   \begin{align*}
\subE{T}=\big\{\alpha \in \rbb_+\colon T^{\dag}(\alpha)
\text{ is subnormal}\big\}.
   \end{align*}
If, moreover, $E\neq 0$ and $\sigma_{\sharp}(|Q|,|E|)
\subseteq [0,1] \times (0,\infty),$ then we define
$\beta^{\dag}(T)\in \rbb_+$ by (cf.\ \eqref{luft-1})
   \begin{align} \label{Zen-claps}
\beta^{\dag}(T) = \inf_{(s,t)\in \sigma_{\sharp}(|Q|,|E|)}
\sqrt{\frac{1-s^2}{t^2}}.
   \end{align}
Using $\beta^{\dag}(T)$ we can describe the set
$\subE{T}$ explicitly.
   \begin{thm} \label{subex}
Suppose that $T = \big[\begin{smallmatrix} V & E \\ 0 & Q
\end{smallmatrix}\big] \in \gqbh$ and  $E\neq 0.$ Then the
following assertions hold{\em :}
   \begin{enumerate}
   \item[(i)] $0 \in \subE{T},$
   \item[(ii)] $\subE{T} \setminus \{0\} \neq \emptyset$
if and only if $\sigma_{\sharp}(|Q|,|E|) \subseteq [0,1]
\times (0,\infty)$ and $\beta^{\dag}(T) > 0,$
   \item[(iii)] if $\subE{T} \setminus \{0\}  \neq \emptyset,$ then
   \begin{align} \label{zach-1}
\subE{T}=\big[0,\beta^{\dag}(T)\big].
   \end{align}
   \end{enumerate}
Moreover, if $\sigma_{\sharp}(|Q|,|E|) \subseteq [0,1]
\times (0,\infty),$ then
   \begin{align*}
\subE{T}=\{0\} \iff \beta^{\dag}(T)=0.
   \end{align*}
   \end{thm}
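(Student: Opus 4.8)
The plan is to reduce the entire statement to Theorem~\ref{main} by tracking how the Taylor spectrum transforms under the scaling $E \mapsto \alpha E$. First I would record the two elementary facts that $|\alpha E| = \alpha|E|$ for $\alpha \in \rbb_+$ and that $T^{\dag}(\alpha) \in \gqbh$ for every $\alpha$, so that Theorem~\ref{main} applies to each $T^{\dag}(\alpha)$ through the pair $(|Q|, \alpha|E|)$. Applying the spectral mapping theorem (Theorem~\ref{spmt}(iii)) to the continuous map $(s,t) \mapsto (s, \alpha t)$ gives
\[
\sigma(|Q|, \alpha|E|) = \{(s, \alpha t) : (s,t) \in \sigma(|Q|,|E|)\}.
\]
For $\alpha > 0$ this map is a homeomorphism fixing the line $t = 0$, so it carries $\sigma_{\sharp}(|Q|,|E|)$ bijectively onto $\sigma_{\sharp}(|Q|, \alpha|E|)$. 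Combining this with the equivalence (i)$\Leftrightarrow$(ii) of Theorem~\ref{main}, applied to $T^{\dag}(\alpha)$, I expect the clean criterion: for $\alpha > 0$, the operator $T^{\dag}(\alpha)$ is subnormal if and only if $s^2 + \alpha^2 t^2 \leq 1$ for every $(s,t) \in \sigma_{\sharp}(|Q|,|E|)$.

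Part (i) is then immediate, since $T^{\dag}(0)$ has Taylor spectrum contained in $\rbb_+ \times \{0\}$, so Theorem~\ref{main}(iii) yields subnormality. For the core of the argument I would feed the above criterion into the definition of $\beta^{\dag}(T)$. Because $E \neq 0$, \eqref{luft-1} guarantees $\sigma_{\sharp}(|Q|,|E|) \neq \emptyset$, and on this set $t > 0$, so the criterion rearranges to $\alpha^2 \leq (1-s^2)/t^2$ for all $(s,t) \in \sigma_{\sharp}(|Q|,|E|)$. If some $\alpha > 0$ satisfies this, then in particular $s^2 \leq 1 - \alpha^2 t^2 < 1$, forcing $\sigma_{\sharp}(|Q|,|E|) \subseteq [0,1] \times (0,\infty)$; this is precisely where the definition \eqref{Zen-claps} of $\beta^{\dag}(T)$ becomes legitimate. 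Passing to the infimum over $\sigma_{\sharp}(|Q|,|E|)$ then yields $\alpha \leq \beta^{\dag}(T)$, which proves one direction of (ii) and shows $\beta^{\dag}(T) \geq \alpha > 0$.

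To finish I would establish the converse together with the exact description of the interval. Since $\beta^{\dag}(T)^2 = \inf (1-s^2)/t^2$ is a lower bound, the inequality $\beta^{\dag}(T)^2 \leq (1-s^2)/t^2$ holds at every point of $\sigma_{\sharp}(|Q|,|E|)$, so $\beta^{\dag}(T)$ itself lies in $\subE{T}$; combined with $0 \in \subE{T}$ and the obvious monotonicity of the criterion in $\alpha$, this gives $[0, \beta^{\dag}(T)] \subseteq \subE{T}$. Conversely, for $\alpha > \beta^{\dag}(T)$ the approximation property of the infimum produces a point $(s_0, t_0) \in \sigma_{\sharp}(|Q|,|E|)$ with $(1-s_0^2)/t_0^2 < \alpha^2$, i.e.\ $s_0^2 + \alpha^2 t_0^2 > 1$, so $T^{\dag}(\alpha)$ fails to be subnormal by Theorem~\ref{main}(ii). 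This produces $\subE{T} = [0, \beta^{\dag}(T)]$, hence (iii); and (ii) follows since under $\sigma_{\sharp}(|Q|,|E|) \subseteq [0,1] \times (0,\infty)$ the condition $\beta^{\dag}(T) > 0$ is then equivalent to $\subE{T} \setminus \{0\} \neq \emptyset$. The ``moreover'' statement is the contrapositive of (ii) under the standing hypothesis $\sigma_{\sharp}(|Q|,|E|) \subseteq [0,1] \times (0,\infty)$, combined with (i).

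The step I expect to require the most care is the endpoint behaviour. The set $\sigma_{\sharp}(|Q|,|E|)$ need not be closed, being the intersection of the compact set $\sigma(|Q|,|E|)$ with the open half-plane $t > 0$, so the infimum in \eqref{Zen-claps} may fail to be attained. The argument must therefore never invoke a minimizing point: inclusion of the right endpoint $\beta^{\dag}(T)$ in $\subE{T}$ rests solely on the infimum being a lower bound, while exclusion of every $\alpha > \beta^{\dag}(T)$ rests on the approximation property of the infimum. Keeping these two one-sided facts cleanly separated is what delivers the \emph{closed} interval in \eqref{zach-1} rather than a half-open one.
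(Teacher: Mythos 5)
Your proposal is correct and takes essentially the same approach as the paper: both derive, via the spectral mapping theorem (Theorem~\ref{spmt}(iii)) and Theorem~\ref{main}(ii), the criterion that $T^{\dag}(\alpha)$ with $\alpha>0$ is subnormal if and only if $s^2+\alpha^2t^2\Le 1$ for all $(s,t)\in\sigma_{\sharp}(|Q|,|E|)$, and then both extract (ii), (iii) and the ``moreover'' part from this by the lower-bound and approximation properties of the infimum $\beta^{\dag}(T)$, never requiring the infimum to be attained. The only cosmetic difference is part (i), where the paper observes that $T^{\dag}(0)$ is quasinormal and hence subnormal by \cite[Proposition~II.1.7]{Co}, whereas you apply Theorem~\ref{main}(iii) to the pair $(|Q|,0)$; both are valid.
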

   \begin{proof}
First observe that $T^{\dag}(0)$ is a quasinormal operator
and thus by \cite[Proposition~II.1.7]{Co}, $T^{\dag}(0)$ is
subnormal, which yields (i). In view of (i) and
Theorems~\ref{spmt}(iii) and \ref{main}(ii), we have
   \begin{align} \label{okul-Z2}
\subE{T}= \{0\} \cup \big\{\alpha\in (0,\infty)\colon
s^2+\alpha^2 t^2 \Le 1, \; \forall (s,t) \in
\sigma_{\sharp}(|Q|,|E|)\big\}.
   \end{align}
It is now a routine matter to show that for any
$\alpha\in\subE{T},$ $[0,\alpha]\subseteq \subE{T}.$

(ii) \& (iii) Suppose that $\alpha \in \subE{T} \setminus
\{0\}.$ Then by \eqref{okul-Z2}, we have
   \begin{align*}
\alpha^2 \Le \frac{1-s^2}{t^2}, \quad (s,t) \in
\sigma_{\sharp}(|Q|,|E|).
   \end{align*}
As a consequence, $\sigma_{\sharp}(|Q|,|E|) \subseteq [0,1]
\times (0,\infty)$ and $0 < \sup \subE{T} \Le
\beta^{\dag}(T).$ Clearly, by \eqref{okul-Z2},
$\beta^{\dag}(T)\in\subE{T} \setminus \{0\}.$ Hence, in
view of the discussion in the previous paragraph,
\eqref{zach-1} holds.

In turn, if $\sigma_{\sharp}(|Q|,|E|) \subseteq [0,1]
\times (0,\infty)$ and $\beta^{\dag}(T) > 0,$ then as above
we verify that $\beta^{\dag}(T) \in \subE{T} \setminus
\{0\}.$

The ``moreover'' part follows from (i) and (ii). This
completes the proof.
   \end{proof}
There is another possibility of associating a linear
operator pencil with an operator of class $\gqb.$
Namely, given $T = \big[\begin{smallmatrix} V & E \\ 0
& Q
\end{smallmatrix}\big] \in \gqbh,$ we define the pencil
$T_{\dag}(\cdot)$~by
   \begin{align*}
T_{\dag}(\lambda) = \begin{bmatrix} V & E \\
0 & \lambda  Q \end{bmatrix} = \begin{bmatrix} V & E \\
0 & 0 \end{bmatrix} +  \lambda \begin{bmatrix} 0 & 0 \\
0 & Q \end{bmatrix}, \quad \lambda \in \cbb,
   \end{align*}
and the corresponding set $\subQ{T}$ by
   \begin{align*}
\subQ{T}=\big\{\alpha \in \rbb_+\colon T_{\dag}(\alpha)
\text{ is subnormal}\big\}.
   \end{align*}
As before, by Corollary~\ref{gyeongju-c},
$T_{\dag}(\lambda)$ is subnormal if and only if
$T_{\dag}(|\lambda|)$ is subnormal, so we can concentrate
on describing the set $\subQ{T}$. Obviously,
$T_{\dag}(\lambda)\in \gqbh$ for every $\lambda\in \cbb.$
The operator $T_{\dag}(\lambda)$ can be
regarded as the  perturbation of $\big[\begin{smallmatrix} V & E \\
0 & 0 \end{smallmatrix}\big]$ by the quasinormal
operator $\lambda \big[\begin{smallmatrix} 0 & 0 \\
0 & Q \end{smallmatrix}\big]$ (in view of
Theorem~\ref{pen-2}(i),
$\big[\begin{smallmatrix} V & E \\
0 & 0 \end{smallmatrix}\big]$ is subnormal provided
$\subQ{T} \neq \emptyset$). Note also that
   \begin{align} \label{ajjaj-1}
\big[\begin{smallmatrix} V & E  \\
0 & 0  \end{smallmatrix}\big] \textit{ and }
\big[\begin{smallmatrix} 0 & 0 \\
0 & Q \end{smallmatrix}\big] \textit{ commute } \iff EQ=0
\iff |Q||E|=0.
   \end{align}
Indeed, the former equivalence is a consequence of
straightforward calculations while the latter follows from
the identities:
   \begin{align*}
(|Q||E|)^2 \overset{(*)}=Q^*Q E^*E
\overset{\eqref{gqb-3}}= (EQ)^*EQ,
   \end{align*}
where $(*)$ is a consequence of
Proposition~\ref{mem-ber-1}(i).

We are now in a position to describe the set $\subQ{T}.$
   \begin{thm} \label{pen-2}
Suppose that $T = \big[\begin{smallmatrix} V & E \\ 0 & Q
\end{smallmatrix}\big] \in \gqbh.$ Set
   \begin{align*}
\sigma_{\flat}(|Q|,|E|) = \sigma(|Q|,|E|) \cap
\big((0,\infty)\times (0,\infty)\big).
   \end{align*}
Then the following assertions hold{\em :}
   \begin{enumerate}
   \item[(i)] $\subQ{T} \neq \emptyset$  if and only
if $\|E\| \Le 1,$
   \item[(ii)] if $\|E\|\Le 1$ and $\sigma_{\flat}(|Q|,|E|) = \emptyset,$
then $\subQ{T}=\rbb_+,$
   \item[(iii)] if $\|E\|\Le 1$ and $\sigma_{\flat}(|Q|,|E|)
\neq \emptyset,$ then $\subQ{T}=[0,\beta_{\dag}(T)],$
where\/\footnote{\;It follows from $\|E\|\Le 1,$
\eqref{font-1} and \eqref{pro-pr-y} that
$\sigma_{\flat}(|Q|,|E|) \subseteq (0,\infty) \times
[0,1],$ which implies that $\beta_{\dag}(T)$ is well
defined.}
   \begin{align} \label{Zen-claps-2}
\beta_{\dag}(T) := \inf_{(s,t)\in \sigma_{\flat}(|Q|,|E|)}
\sqrt{\frac{1-t^2}{s^2}}.
   \end{align}
   \end{enumerate}
   \end{thm}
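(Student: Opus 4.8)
The plan is to reduce the entire statement to a single pointwise description of $\subQ{T}$ obtained by feeding the pencil $T_{\dag}(\alpha)$ into Theorem~\ref{main}. First I would record that $T_{\dag}(\alpha)\in\gqbh$ for every $\alpha\in\rbb_+$ (as noted just before the theorem), with quasinormal corner $\alpha Q$, and that $|\alpha Q|=\alpha|Q|$ since $(\alpha Q)^*(\alpha Q)=\alpha^2 Q^*Q$. Applying the spectral mapping theorem (Theorem~\ref{spmt}(iii)) to the continuous map $(s,t)\mapsto(\alpha s,t)$ on $\sigma(|Q|,|E|)$ yields
\begin{align*}
\sigma(\alpha|Q|,|E|)=\big\{(\alpha s,t)\colon (s,t)\in\sigma(|Q|,|E|)\big\}.
\end{align*}
Since a point $(\alpha s,t)$ lies in $\rbb_+\times(0,\infty)$ exactly when $t>0,$ Theorem~\ref{main}(ii) applied to $T_{\dag}(\alpha)$ then gives the master criterion
\begin{align} \label{master-crit}
\alpha\in\subQ{T}\iff \alpha^2 s^2+t^2\Le 1 \ \text{ for every } (s,t)\in\sigma_{\sharp}(|Q|,|E|).
\end{align}
All three assertions should drop out of \eqref{master-crit}.

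For (i) and (ii) I would argue directly from \eqref{master-crit}. As $\alpha^2 s^2\Ge 0,$ any $\alpha\in\subQ{T}$ forces $t\Le 1$ on $\sigma_{\sharp}(|Q|,|E|),$ hence on all of $\sigma(|Q|,|E|)$ because the points with $t=0$ impose nothing; by the projection property \eqref{pro-pr-y} together with \eqref{font-1} this reads $\|E\|=\max\sigma(|E|)\Le 1.$ Conversely, if $\|E\|\Le 1$ then substituting $\alpha=0$ in \eqref{master-crit} leaves only the constraint $t\Le 1,$ which holds, so $0\in\subQ{T};$ this establishes (i). For (ii), if $\sigma_{\flat}(|Q|,|E|)=\emptyset$ then every point of $\sigma_{\sharp}(|Q|,|E|)$ has $s=0,$ so the left-hand side of the inequality in \eqref{master-crit} is just $t^2,$ independent of $\alpha$ and bounded by $1$ thanks to $\|E\|\Le 1;$ hence $\subQ{T}=\rbb_+.$

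For (iii) the idea is to split $\sigma_{\sharp}(|Q|,|E|)$ into its $s=0$ part and its $s>0$ part, the latter being precisely $\sigma_{\flat}(|Q|,|E|).$ The $s=0$ points again contribute only $t\Le 1,$ which is guaranteed by $\|E\|\Le 1.$ On $\sigma_{\flat}(|Q|,|E|),$ where $s>0$ and, by the footnote's argument from \eqref{font-1} and \eqref{pro-pr-y}, $t\Le 1,$ the inequality in \eqref{master-crit} rearranges to $\alpha^2\Le (1-t^2)/s^2.$ Thus $\alpha\in\subQ{T}$ if and only if $\alpha^2\Le\inf_{(s,t)\in\sigma_{\flat}(|Q|,|E|)}(1-t^2)/s^2;$ pulling the square root through the infimum (legitimate since $\sqrt{\cdot}$ is continuous and increasing) identifies the square root of this infimum with $\beta_{\dag}(T),$ whence $\subQ{T}=\{\alpha\in\rbb_+\colon\alpha\Le\beta_{\dag}(T)\}=[0,\beta_{\dag}(T)].$

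The conceptual content is essentially the translation \eqref{master-crit}; the remaining work is elementary inequalities. I expect the only genuine nuisance to be the bookkeeping of the degenerate strata of the Taylor spectrum: one must verify that the points with $t=0$ never constrain $\alpha,$ and that the points with $s=0$ contribute only the global bound $\|E\|\Le 1$ rather than anything $\alpha$-dependent. Once these boundary points are isolated and the well-definedness of $\beta_{\dag}(T)$ is noted (via the footnote's inclusion $\sigma_{\flat}(|Q|,|E|)\subseteq(0,\infty)\times[0,1]$), assertions (i)--(iii) follow immediately from \eqref{master-crit}.
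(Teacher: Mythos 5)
Your proof is correct and matches the paper's own argument almost line for line: the paper likewise derives your master criterion (its equation \eqref{num-1}) from Theorems~\ref{spmt}(iii) and \ref{main}(ii), and then settles (i)--(iii) by the same elementary case analysis on the strata $t=0,$ $s=0,$ and $\sigma_{\flat}(|Q|,|E|).$ The only cosmetic difference is in (iii), where the paper proves $\sup \subQ{T} \Le \beta_{\dag}(T),$ checks $\beta_{\dag}(T)\in \subQ{T},$ and invokes the monotonicity $[0,\alpha]\subseteq \subQ{T}$ for $\alpha\in\subQ{T},$ whereas you pass the square root through the infimum directly; these amount to the same computation.
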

   \begin{proof}
It follows from Theorems~\ref{spmt}(iii) and
\ref{main}(ii) that
   \begin{align} \label{num-1}
\subQ{T} = \big\{\alpha \in \rbb_+\colon \alpha^2 s^2 + t^2
\Le 1, \; \forall (s,t) \in \sigma_{\sharp}(|Q|,|E|)\big\}.
   \end{align}
Recall that the set $\sigma_{\sharp}(|Q|,|E|)$ may be
empty (see \eqref{luft-1}). It is easily seen that
   \begin{align} \label{trzy-3}
\text{$[0,\alpha] \subseteq \subQ{T}$ whenever
$\alpha\in\subQ{T}.$}
   \end{align}

(i) In view of \eqref{luft-1} and \eqref{num-1}, there is
no loss of generality in assuming that
$\sigma_{\sharp}(|Q|,|E|) \neq \emptyset.$ Suppose that
$\subQ{T} \neq \emptyset.$ Then, by \eqref{pro-pr-y} and
\eqref{num-1}, $\sigma(|E|)\setminus \{0\} \subseteq
[0,1],$ hence by \eqref{font-1}, $\|E\|\Le 1.$ Conversely,
if $\|E\| \Le 1,$ then by \eqref{font-1}, \eqref{pro-pr-y}
and \eqref{num-1}, $0\in \subQ{T}.$

(ii) Since $\|E\|\Le 1,$ we infer from \eqref{font-1}
that $\sigma(|E|) \subseteq [0,1].$ Hence, if
$(s,t)\in \sigma_{\sharp}(|Q|,|E|) \cap (\{0\} \times
\rbb_+),$ then by using \eqref{pro-pr-y} we see that
$\alpha^2 s^2 + t^2 \Le 1$ for all $\alpha \in
\rbb_+.$ By assumption
   \begin{align*}
\sigma_{\sharp}(|Q|,|E|) \cap \big((0,\infty) \times
\rbb_+\big) = \sigma_{\flat}(|Q|,|E|) = \emptyset,
   \end{align*}
and consequently, by \eqref{num-1}, $\subQ{T}
=\rbb_+.$

(iii) Suppose that $\|E\|\Le 1$ and
$\sigma_{\flat}(|Q|,|E|) \neq \emptyset.$ If $\alpha \in
\subQ{T},$ then by \eqref{num-1}, we have
   \begin{align*}
\alpha^2 \Le \frac{1-t^2}{s^2}, \quad (s,t) \in
\sigma_{\flat}(|Q|,|E|),
   \end{align*}
which implies that $\sup \subQ{T} \Le
\beta_{\dag}(T).$ Now we prove the opposite
inequality. As in (ii), we see that if $(s,t)\in
\sigma_{\sharp}(|Q|,|E|) \cap (\{0\} \times \rbb_+),$
then $\alpha^2 s^2 + t^2 \Le 1$ for all $\alpha \in
\rbb_+.$ In turn, if
   \begin{align*}
(s,t)\in \sigma_{\sharp}(|Q|,|E|) \cap ((0,\infty) \times
\rbb_+)=\sigma_{\flat}(|Q|,|E|),
   \end{align*}
then the inequality $\alpha^2 s^2 + t^2 \Le 1$ holds
for $\alpha=\beta_{\dag}(T).$ Therefore, by
\eqref{num-1}, $\beta_{\dag}(T)\in \subQ{T}.$ Combined
with \eqref{trzy-3}, this implies that
$\subQ{T}=[0,\beta_{\dag}(T)],$ which completes the
proof.
   \end{proof}
   \begin{rem}
Concerning Theorem~\ref{pen-2}, it is worth mentioning that
according to the assertions \eqref{stis-zero} and
\eqref{ajjaj-1} we have
   \begin{align*}
\sigma_{\flat}(|Q|,|E|) = \emptyset \iff |Q||E|=0 \iff
\big[\begin{smallmatrix} V & E \\
0 & 0 \end{smallmatrix}\big] \textit{ and }
\big[\begin{smallmatrix} 0 & 0 \\
0 & Q \end{smallmatrix}\big] \textit{ commute.}
   \end{align*}
In other words, the set $\sigma_{\flat}(|Q|,|E|)$ is empty
if and only if the perturbation $T_{\dag}(\lambda)$ of
$\big[\begin{smallmatrix} V & E \\
0 & 0 \end{smallmatrix}\big]$ commutes with the perturbing
operator $\lambda\big[\begin{smallmatrix} 0 & 0 \\
0 & Q \end{smallmatrix}\big]$ for some $\lambda\in
\cbb\setminus \{0\}.$
   \hfill $\diamondsuit$
   \end{rem}
We now show that for an arbitrary $b\in \rbb_+,$ there
exists  $T =  \big[\begin{smallmatrix} V & E \\
0 & Q \end{smallmatrix}\big] \in \gqbh$ such that
$\subE{T}=[0,b].$ Similarly, for a given $b\in \rbb_+
\cup \{\infty\},$ we can find
$T =  \big[\begin{smallmatrix} V & E \\
0 & Q \end{smallmatrix}\big] \in \gqbh$ such that
$\subQ{T}=[0,b] \cap \rbb_+.$
   \begin{exa}[Example~\ref{not-contr-1} continued]
Let $T_{\tau,\eta}$ be as in Example~\ref{not-contr-1}. We
begin by showing that for every $b \in \rbb_+,$ there exist
$\tau \in \cbb$ and $\eta\in \cbb\setminus \{0\}$ such that
$\subE{T_{\tau,\eta}}=[0,b].$ Indeed, it follows from
\eqref{A-yu-ta} that
$\sigma_{\sharp}(|Q_{\tau}|,|E_{\eta}|) =
\{(|\tau|,|\eta|)\}.$ Assume additionally that $|\tau| \Le
1.$ Combined with \eqref{Zen-claps}, this gives
   \begin{align} \label{num-obiad}
\beta^{\dag}(T_{\tau,\eta}) =
\sqrt{\frac{1-|\tau|^2}{|\eta|^2}}.
   \end{align}
First, suppose that $b=0.$ Then by considering the
case $|\tau|=1$ we infer from the moreover part of
Theorem~\ref{subex} that $\subE{T_{\tau,\eta}}=[0,b].$
Let now $b>0.$ Then by taking into account the case
$|\tau|< 1$ we deduce from \eqref{num-obiad} and
Theorem~\ref{subex} that $\subE{T_{\tau,\eta}} =
\big[0,\beta^{\dag}(T_{\tau,\eta})\big].$ This
together with \eqref{num-obiad} shows that there
exists $\eta \in \cbb\setminus \{0\}$ such that
$\beta^{\dag}(T_{\tau,\eta})=b.$

Similarly, using Theorem~\ref{pen-2}, one can show that for
every $b \in \rbb_+\cup \{\infty\},$ there exist parameters
$\tau$ and $\eta$ such that $\subQ{T_{\tau,\eta}}=[0,b]
\cap \rbb_+.$ We leave the details to the reader.
   \hfill $\diamondsuit$
   \end{exa}
   We conclude this paper by commenting the contents of
this section. In view of Theorem~\ref{main2}, the technique
of using the Taylor spectrum developed here can also be
applied to describe the sets of the form
   \begin{align*}
\big\{\alpha \in \rbb_+\colon T^{\dag}(\alpha) \text{ is in
$ \mathscr C$}\big\} \text{ and } \big\{\alpha \in
\rbb_+\colon T_{\dag}(\alpha) \text{ is in $ \mathscr
C$}\big\},
   \end{align*}
where $\mathscr C$ is one of the classes of operators
appearing in Section \ref{Sec8} including $m$-contractions,
$m$-expansions, etc. As the number of cases to be
considered is large (in particular depends on the parity of
$m$) and each of them requires separate treatment, we
decided not to include details in this paper.

\vspace{1ex}

   {\bf Acknowledgement.} A part of this paper was
written while the first and third authors visited
Jagiellonian University in Summer of 2019, and the
second and fourth authors visited Kyungpook National
University in Autumn of 2019. They wish to thank the
faculties and the administrations of these units for
their warm hospitality.
   
   \end{document}